\documentclass[12pt,a4paper]{amsart}
\usepackage{latexsym,amsfonts,amsmath,amssymb,amsthm,rotating,txfonts}
\usepackage{tikz,pdfsync}
\newtheorem{theorem}{Theorem}
\newtheorem{proposition}[theorem]{Proposition}
\newtheorem{lemma}[theorem]{Lemma}
\newtheorem{corollary}[theorem]{Corollary}
\theoremstyle{definition}
\newtheorem{definition}[theorem]{Definition}
\newtheorem{example}[theorem]{Example}
\theoremstyle{remark}
\newtheorem{remark}[theorem]{Remark}

\newcommand{\T}{T}
\newcommand{\TT}{\mathrm{T}}
\newcommand{\TM}{\TT M}
\renewcommand{\t}{{\mathfrak t}}

\newcommand{\q}{{\mathfrak q}}
\renewcommand{\a}{{\mathfrak a}}
\renewcommand{\b}{{\mathfrak b}}
\newcommand{\g}{{\mathfrak g}}

\newcommand{\rz}{{\t_{\ZZ}^\perp}}
\newcommand{\rzm}{{\t_M^\perp}}
\renewcommand{\r}{S}
\newcommand{\ZZ}{C}

\newcommand{\n}{{\mathfrak n}}
\newcommand{\C}{{\mathbb C}}
\renewcommand{\P}{{\mathbb P}}

\newcommand{\R}{{\mathbb R}}
\newcommand{\Z}{{\mathbb Z}}
\newcommand{\CA}{\mathcal{ A}}

\newcommand{\CB}{\mathcal{ B}}
\newcommand{\CE}{\mathcal{ E}}
\newcommand{\CEk}{\CE\tensor\CL^k}

\newcommand{\CL}{\mathcal{ L}}

\newcommand{\CS}{{S}}
\newcommand{\CR}{\mathcal{ R}}

\newcommand{\Todd}{\operatorname{Todd}}

\newcommand{\Tr}{\operatorname{Tr}}
\newcommand{\ch}{\operatorname{ch}}

\newcommand{\Ker}{\mathrm{Ker}}
\newcommand{\Coker}{\mathrm{Coker}}

\newcommand{\rank}{\operatorname{rank}}

\newcommand{\dom}{{\operatorname{dom}}}
\newcommand{\even}{{\operatorname{even}}}
\newcommand{\odd}{{\operatorname{odd}}}

\newcommand{\Sub}{\FPC}
\newcommand{\loc}{\Delta}

\newcommand{\chie}{\chi_\CE}
\newcommand{\chiek}{\chi_{\CEk}}

\newcommand{\End}{\mathrm{End}}
\newcommand{\F}{\mathcal{F}\!}

\newcommand{\rhat}{\hat R(\T)}
\newcommand{\Xpol}{Y}

\newcommand{\ccp}{\tau_p[\CE]}
\newcommand{\ccpb}{\tau_p[\CE^\bullet]}

\newcommand{\CRF}{\CR(\Phi)}
\newcommand{\tensor}{\otimes}
\newcommand{\vv}{\mathcal{T}}

\newcommand{\setm}{\!\setminus\!}

\newcommand{\felarrow}{\!\uparrow\!}

\newcommand{\yv}{\ysg}
\newcommand{\gammap}{\gamma-\mu(p)}
\newcommand{\ap}{\a-\mu(p)}

\newcommand{\qpol}{\delta}
\newcommand{\roots}{\mathfrak{R}}
\newcommand{\muperp}{\mu_\perp}

\newcommand{\reg}{{\mathrm{reg}}}
\newcommand{\unit}{\mathbf{1}}
\newcommand{\WT}{\wedge^\bullet \TT^*M}
\newcommand{\yzz}{\yc}
\newcommand{\NZZ}{KC}

\newcommand{\ysg}{Y_{S,\gamma}}
\newcommand{\am}{A_M}
\newcommand{\yc}{Y_C}
\newcommand{\tjs}{{\bar \TT}^J}
\newcommand{\tj}{{\TT}^J}
\newcommand{\FPC}{\mathrm{Comp}_T}

\newcommand{\lspan}{\mathrm{span}}
\newcommand{\termec}{\mathrm{Term}_C[\mu,\CE,\gamma]}
\newcommand{\mcont}{\mathrm{Term}}
\newcommand{\G}{\mathcal{G}}
\newcommand{\bba}{\tilde{\mathfrak{b}}}

\title{$[Q,R]=0$ and Kostant partition functions } \author{A. Szenes}
\thanks{The support of FNS grants 132873 and 126817 is gratefully
  acknowledged.}  \author{M. Vergne}

\begin{document}

\maketitle
\tableofcontents

\section{Introduction}
\label{sec:thm}

\subsection{Quantization and multiplicities}
Let $M$ be a compact almost complex manifold. The complex structure
$J\in\Gamma\End(\TM)$ then induces the splitting $\TM\tensor \C=\tj
M\oplus \tjs M$, where $\tj M$ is the complex vector bundle of
$+i$-eigenspaces, while $\tjs M$ is the bundle of $-i$-eigenspaces of
$J$ acting on $\TM\tensor\C$.  When $M$ is a complex manifold endowed
with an Hermitian metric, then $\tj M$ may be identified with the
  complex tangent bundle, while $\tjs M$ with the complex
  cotangent bundle of $M$.

To every complex vector bundle $\CE\to M$ over $M$ one can associate
an integer as follows (see \eqref{qdim} below).  Set the notation
$\Omega_J^{\bullet}(M,\CE)=\Gamma(\wedge^{\bullet} (\tjs M)^*\otimes
\CE)$ for the anti-holomorphic differential forms with values in
$\CE$, and consider the twisted Dolbeault-Dirac operator
\cite{ber-get-ver}
$$D_{\CE}:\Omega_J^{\even}(M,\CE)\to \Omega_J^{\odd}(M,\CE),$$
which is a first-order elliptic differential operator on $M$.
We can associate to this operator the $\Z_2$-graded
vector space
\begin{equation}
  \label{qdef}
 Q(M,\CE)=\Ker(D_\CE)\oplus\Coker (D_\CE),
\end{equation}
where  $\Ker(D_\CE)$ is placed in the even part, while $\Coker
(D_\CE)$ in the odd part of $Q(M,\CE)$.

\begin{remark}
Thought of as the formal
difference of $\Ker(D_\CE)$ and $\Coker (D_\CE)$ one can think of
$Q(M,\CE)$ as the virtual space of solutions of the corresponding
differential equations.
\end{remark}

The (super)-dimension of this $\Z_2$-graded vector space is defined to
be the integer
\begin{equation}
  \label{qdim}
  \dim Q(M,\CE)=\dim \Ker(D_\CE)-\dim \Coker (D_\CE).
\end{equation}        This number may be computed
by the Atiyah-Segal-Singer index formula:
\begin{equation}
  \label{atiyahsinger}
  \dim Q(M,\CE)=\int_M\ch(\CE)\Todd(\tj M);
\end{equation}
here $\ch(\CE)$ is the Chern character of $\CE$ and $\Todd(\tj M)$ is
the Todd class of $M$.

Now assume that a compact, connected Lie group $G$ acts compatibly on
the manifold $M$ and the bundle $\CE$, and preserves the almost
complex structure $J$.  Then $Q(M,\CE)$ becomes a $\Z_2$-graded
representation of $G$, and we still denote by $Q(M,\CE)$ the
corresponding element $\Ker(D_\CE)-\Coker (D_\CE)$ of the Grothendieck
ring $R(G)$ of virtual representations of $G$.  We will be interested
in the decomposition of this virtual representation into irreducible
components.

To make this more explicit, we introduce the following notation for the Lie data:
\begin{itemize}
\item  Denote by $\T$ the maximal torus of $G$, and
\item by $\g$ and $\t$ the Lie algebras of $G$ and $\T$, respectively;
\item we will identify $\t^*$ with the $\T$-invariant subspace of
  $\g^*$ under the coadjoint action.
  \item Let  $\Lambda$ stand for the weight lattice of $\T$ thought of as a
  subspace of $\t^*$.
\item We will use the notation $e_{\lambda}$ for the
character $\T\to\C^*$  corresponding to $\lambda\in\Lambda$, and write
$t^\lambda$ for the value of this character on $t\in\T$. Thus we have
$e_\lambda(t) = t^\lambda$ for $t\in \T$, and also
$t^\lambda=e^{i\langle\lambda,X\rangle}$ if $X\in\t$ and $t=\exp(X)$.

\item Denote the set of roots of $G$ by $\roots$, and choose a
  splitting of $\roots$ into a positive and a negative part:
  $\roots=\roots^+\cup\roots^-$. Let $\g_\C=\t_{\C}\oplus \n^+\oplus
  \n^-$ be the corresponding triangular decomposition of the
  complexification of the Lie algebra $\g$ of $G$.

\item Write $dt$ for the Haar measure on $\T$ satisfying $\int_\T\,dt=1$.
\end{itemize}

Further, we introduce the following notation:
\begin{itemize}
\item for $X\in\g$, we denote by $VX$ the vector field
\[ VX:M\to \TM, \quad VX: \,q\mapsto\frac{d}{dt} e^{-t X} q|_{t=0}\] on
$M$ induced by the $G$-action;
\item we define the character $\chie: T\to\C$ via
\[   \chie(t)  = \Tr t |\, \Ker(D_\CE) - \Tr t|\,\Coker (D_\CE).
\]
\end{itemize}
Atiyah-Bott-Segal-Singer \cite{ati-bot2, ati-bot3,Atiyah-Segal68} gave
a formula for $\chie(t)$ in terms of the connected components of the
set of fixed points of the action of $t$ on $M$.  The Fourier
transform $\F\chie:\Lambda\to\Z$ of $\chie$ is a function with finite
support; its value \[ \F\chie(\lambda) = \int_\T
t^{-\lambda}\,\chi_{\CE}(t)\,dt \] is an integer, called the {\em
  multiplicity} of the weight $\lambda$ in $\chie$.  Using the fixed
point formula, one can express $\F\chie(\lambda)$ in terms of
partition functions. The first example of such an expression was
Kostant's formula for the multiplicity of a weight in a
finite-dimensional representation of a compact Lie group in terms of
the number of ways a weight can be expressed as a sum of positive
roots.

Our focus will be  the calculation of the dimension of the
$G$-invariant part $Q(M,\CE)^G$   of $Q(M,\CE)$, obtained by
taking $G$-invariants on the right hand side of \eqref{qdef}. Thus we have
\[   \dim Q(M,\CE)^G = \dim \Ker(D_\CE)^G-\dim\Coker (D_\CE)^G.
\]
According to the Weyl character formula, this integer may be
expressed via the multiplicities using the formula
\begin{equation}
  \label{reduct}
  \dim Q(M,\CE)^G
=\int_\T\prod_{\alpha\in\roots^-}(1-t^{\alpha})\,\chie(t)\,dt.
\end{equation}

A key tool of our approach is a formula of Paradan \cite{par2},
expressing $\chie(t)$ as a sum of characters of infinite dimensional
virtual representations of $T$ associated to a collection of subtori
of $T$ (cf. Proposition \ref{paradan}).  We will give a direct proof
of this result, deriving it from the Atiyah-Bott-Segal-Singer 

Let us demonstrate Paradan's formula for $\chie(t)$ is the
simplest example: that of the complex projective line.

\begin{example}
\label{ponex1}
Let  $M=\P^1(\C)$ be
endowed with the action of the group $G=\mathrm{SU}(2)$.
Let $\CL$ be the dual of the tautological bundle, and let $\CE=\CL^k$.
The maximal
torus $\T$ of the group $G$ corresponds to the set of diagonal matrices
in $\mathrm{SU}(2)$.  The action of $t\in T$ on $P^1(\C)$ is given by
$t\cdot(x:y)=(tx:t^{-1}y)$.  The Atiyah-Bott formula reads
as
\begin{equation}
  \label{ab22}
\chi_{\CL^k}(t)=\frac{t^k}{1-t^{-2}} +\frac{t^{-k}}{1-t^2}.
\end{equation}
Then $\chi_{\CL^k}(t) =
 \begin{cases}\sum_{j=0}^kt^{k-2j},\hspace{1.9cm}\text{if }\,0\leq k,\\
0,\;\hspace{3cm}\text{if } k=-1,\\
-\sum_{j=0}^{-k-2}t^{-k-2-2j},\;\hspace{0.8cm}\text{if }\,k< 0.\\
  \end{cases}$

The dimension of the virtual representation  $Q(M,\CL^k)$ is equal to
$\chi_{\CL^k}(1)$, which is equal to $k+1$ in our case.

Expanding $(1-t^2)^{-1}$ as the geometric series $\sum_{j=0}^{\infty} t^{2j}$, we obtain
\begin{equation}
  \label{exp2}
\chi_{\CL^k}(t)=\sum_{j=0}^{\infty}t^{-k+2j}- \sum_{j=1}^{\infty} t^{k+2j}.
\end{equation}

Using the identity $\sum_{j=0}^{\infty}
t^{2j}=\sum_{j=-\infty}^{\infty} t^{2j} -\sum_{j=-\infty}^{-1}t^{2j},$
we obtain Paradan's symmetric expression for $\chi_{\CL^k}(t)$, which is the
sum of three formal characters

\begin{equation}
  \label{expone}
\chi_{\CL^k}(t)=t^k\sum_{j=-\infty}^{\infty}t^{2j}- \sum_{j=1}^{\infty}t^{-k-2j}- \sum_{j=1}^{\infty}t^{k+2j}.
\end{equation}

The character $t^k\sum_{j=-\infty}^{\infty}t^{2j}$ depends on
$k\,\mathrm{mod}\,2$ only; it defines a generalized function on $T$
supported at $t=\pm 1$, and hence it is ``invisible'' at any $t\neq \pm 1$.
\end{example}

\subsection{Quantization of symplectic manifolds}
Consider an equivariant line bundle $\CL$ over $M$, endowed with a
$G$-invariant Hermitian structure and an Hermitian connection
$\nabla$. Then the curvature $\nabla^2$ will be of the form
$-i\Omega$, where $\Omega$ is a closed real 2-form on $M$.  The
$G$-invariant connection $\nabla$ determines a $G$-equivariant map
$\mu_G: M\to\g^*$, called the {\em moment map}:
\begin{equation}
  \label{defmu}
i\langle\mu_G,X\rangle=L_X-\nabla_{VX},
\end{equation}
where $L_X$ is the Lie derivative acting on the sections of $\CL$.
Observe that if $p\in M$ is a fixed point of the $\T$-action, then
$\mu_G(p)$ is in $\t^*\subset\g^*$, moreover, $\mu_G(p)$ is exactly
the $\T$-weight of the fiber $\CL_p$.  Differentiating \eqref{defmu},
we obtain the key identity
\begin{equation}
  \label{musat}
\langle
d\mu_G,X\rangle+\Omega(VX,\cdot) =0.
\end{equation}

The goal of this article is to give new proofs of certain
polynomiality properties of the function $k\mapsto \dim Q(M,\CL^k)^G$.

First, consider the case where $G=\T$ is abelian. In this case, we
will write $\mu:M\to\t^*$ for the moment map, omitting the index $\T$.
Our first result concerns the case of large $k$.
\begin{theorem} \label{first} Let $\CE^\even$ and $\CE^\odd$ be
  $\T$-equivariant vector bundles over the almost complex manifold
  $M$.  Let $\CL$ be an equivariant line bundle with associated moment
  map $\mu:M\to\t^*$. Suppose that $\CE^\even$ and $\CE^\odd$ restricted to
  $\mu^{-1}(0)$ are isomorphic as $T$-equivariant vector bundles.
  Then, for $k$ large, the multiplicities
  $\F\chi_{\CE^\even\tensor\CL^k}(0)$ and $\F\chi_{\CE^\odd\tensor\CL^k}(0)$
  are equal.  \end{theorem}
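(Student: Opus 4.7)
The plan is to apply Paradan's formula (Proposition \ref{paradan}) to $\chi_{\CE^{\bullet}\otimes\CL^k}$ for $\bullet\in\{\even,\odd\}$ and to separate the resulting contributions according to whether they are localized on $\mu^{-1}(0)$ or not. Recall that Paradan's formula writes
\[
\chi_{\CE^{\bullet}\otimes\CL^k}(t) \;=\; \sum_{C} R_C[\CE^{\bullet}\otimes\CL^k](t),
\]
where the sum is indexed by the connected components $C$ of the zero set of a Kirwan-type vector field built from $\mu$, the moment map is constant on each $C$ with value $\gamma_C\in\t^*$, and each summand $R_C$ is the character of an infinite-dimensional virtual $\T$-representation that depends only on the local $\T$-equivariant data of $\CE^{\bullet}\otimes\CL^k$ in a $\T$-invariant neighbourhood of $C$.

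Step 1: the components $C$ with $\gamma_C=0$ lie inside $\mu^{-1}(0)$, and the hypothesis that $\CE^{\even}\cong\CE^{\odd}$ as $\T$-equivariant bundles over $\mu^{-1}(0)$ implies that the local data from which $R_C$ is built agree for the two bundles. Hence $R_C[\CE^{\even}\otimes\CL^k]=R_C[\CE^{\odd}\otimes\CL^k]$ for every such $C$, and these pieces contribute equally to $\F\chi_{\CE^{\even}\otimes\CL^k}(0)$ and to $\F\chi_{\CE^{\odd}\otimes\CL^k}(0)$.

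Step 2: for each $C$ with $\gamma_C\neq 0$, I would analyze the weight support of $R_C[\CE^{\bullet}\otimes\CL^k]$. In Paradan's construction, $R_C$ is obtained from the Atiyah-Bott contribution at $C$ by expanding each factor $(1-t^{\beta})^{-1}$ coming from the normal bundle in a polarization determined by the direction of $\gamma_C$. Twisting by $\CL^k$ shifts every weight that appears at $C$ by $k\gamma_C$, while the finitely many remaining shifts (from the weights of $\CE^{\bullet}|_C$ and from polarizing the normal bundle weights along $\gamma_C$) stay bounded as $k$ varies. Thus the weight support of $R_C[\CE^{\bullet}\otimes\CL^k]$ is contained in an affine cone with apex $k\gamma_C+O(1)$ whose recession cone lies in a closed half-space whose defining hyperplane separates it from $0$; there consequently exists a uniform $k_0$ such that $0$ is outside this support for every $C$ with $\gamma_C\neq 0$ and every $k\geq k_0$. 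These pieces therefore contribute $0$ to $\F\chi(0)$.

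Combining Steps 1 and 2 gives $\F\chi_{\CE^{\even}\otimes\CL^k}(0)=\F\chi_{\CE^{\odd}\otimes\CL^k}(0)$ for all $k\geq k_0$. The main obstacle I anticipate is precisely the support analysis of Step 2: one must pin down how Paradan's polarization is adapted to $\gamma_C$, verify that $\gamma_C$ lies strictly inside the corresponding positive cone, and then bound the finitely many correction shifts uniformly so that $0$ is indeed eventually excluded as $k\to\infty$. This is essentially a Kostant-partition-function growth statement built into Paradan's construction, but checking it for each of the finitely many non-zero critical values $\gamma_C$ simultaneously is where the substance of the argument lies.
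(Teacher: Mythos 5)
Your proposal runs in the opposite logical direction from the paper, and as written it is circular. The paper's whole architecture is: Theorem~\ref{first} (equivalently Theorem~\ref{vanishing}) is the \emph{only analytic input}, proved independently in \S\ref{sec:appendix} by a stationary-phase argument; it is then packaged as Corollary~\ref{corvanishing}, which is the statement that $\loc_\mu[\CE^\bullet,\a]=0$ whenever $\CE^\even\cong\CE^\odd$ over some regular fiber $\mu^{-1}(\gamma)$ with $\gamma\in\a$; and \emph{that} corollary is what makes the terms in Paradan's decomposition \eqref{thesum} with $0$ on the relevant wall vanish (Theorem~\ref{ParadanK}). Your Step~1 asserts exactly this corollary as if it were an obvious feature of the decomposition: that the contribution of the component(s) ``at $0$'' is governed by the restriction of $\CE^\bullet$ to $\mu^{-1}(0)$. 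In the combinatorial version the paper actually proves (Proposition~\ref{paradan}), the terms indexed by $C\in\Sub(M)$ with $0\in A_C$ are $\loc_\mu[\CE_C\otimes\CS(\NZZ\felarrow\yc),\a(\gamma_C)]$, and by Definition~\ref{defDelta} these are built from the $T$-weights of $\CE$ at the $T$-fixed points $p\in C\cap F$, points which generically have $\mu(p)\neq 0$ and need have nothing to do with $\mu^{-1}(0)$. There is no combinatorial reason for these terms to agree for $\CE^\even$ and $\CE^\odd$ under the hypothesis; proving that they do \emph{is} Theorem~\ref{first}. (Your description of the decomposition as indexed by components of the critical set of $\|\mu\|^2$, with the $\beta=0$ piece supported on $\mu^{-1}(0)$, is the transversally-elliptic version of Paradan's theorem from \cite{par2}, which the paper explicitly sets out to avoid as an input.)

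Your Step~2 is fine in spirit and is essentially Proposition~\ref{fouriersupp}(1) together with the shift-by-$k\mu(p)$ observation used in the proof of Theorem~\ref{GS}: for $C$ with $0\notin A_C$, the support of $\F\termec$ for $\CL^k\otimes(\cdots)$ lies in the half-space $\{\lambda:\langle\lambda,Y_C\rangle\geq k\langle\gamma_C,Y_C\rangle + O(1)\}$, and $\langle\gamma_C,Y_C\rangle>0$ when $\gamma$ is generic and close to $0$, so $0$ is eventually excluded. But this only disposes of the ``easy'' components; the ones with $0\in A_C$ are exactly the ones Step~1 must handle, and that is the crux. The paper's actual proof of Theorem~\ref{first} (via Theorem~\ref{vanishing}) is a completely different, self-contained argument: write $\F\chi_{\CE^\bullet\otimes\CL^k}(\lambda)=\int_T e_{-\lambda}(t)\chi_{\CE^\bullet\otimes\CL^k}(t)\,dt$, cover $T$ by neighborhoods $g\exp(U_g)$, on regular $g$ expand via Atiyah--Bott (where $\ccpb=0$ for fixed points with $\mu(p)$ near $0$), on singular $g$ replace by the Berline--Vergne integral \eqref{berlinevergne} (where $\ch_{\CE^\bullet}(X,q)=0$ for $\mu(q)$ near $0$), then apply the oscillatory-integral estimate of Lemma~\ref{Fourier} to the phase $k\langle\mu(p)-\lambda/k,X\rangle$ resp.\ $k\langle\mu(q)-\lambda/k,X\rangle$, sum via a partition of unity, and use integrality of $\F\chi$. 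No Paradan decomposition, no partition functions. To fix your proposal you would need either to supply an independent proof of Corollary~\ref{corvanishing}, or to invoke the transversally-elliptic localization theorem of \cite{par2} wholesale, which is a much heavier input than the one the paper uses.
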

We give a proof of this theorem in \S\ref{sec:appendix}, following
Meinrenken, based on the stationary phase principle applied to the
integral formula of \cite{ber-ver85} for $\chiek$.

Now we turn to the case of a general compact connected $G$. We will need
to weaken the notion of polynomiality as follows.
\begin{definition}\label{defquasi}
  Let $\Xi$ be a  lattice, i.e. a free  $\Z$-module of finite rank. A function
  $P:\Xi\to\C$ is {\em quasi-polynomial} if there exists a
  sublattice $\Xi_0\subset\Xi$ of finite index such that for
  every $\lambda\in\Xi$ the function $P$ restricted to
  $\lambda+\Xi_0$ coincides with the restriction of a polynomial
  function from $\Xi$ to $\Xi_0$.
\end{definition}
In particular, a function $P:\Z\to\C$ is quasi-polynomial if, for some
nonzero $d\in\Z$, the function $l\mapsto P(ld+r)$ is polynomial for
every $r\in\Z$.

\bigskip

\begin{example}
\label{ponex}
We return to Example \ref{ponex1}. We compiled the relevant data in the following table:\\
  \begin{center}
\begin{tabular}{ |l || l | r | r | r | r | r | r | r | r | r| }
\hline
$k$ &  \dots &  $-4$ & $-3$ & $-2$ & $-1$ & $0$ & $1$ & $2$ & $3$  &  \dots\\ \hline
$\dim Q(M,\CL^k)  $ & \dots & $-3$ & $ -2 $ & $ -1 $ & $ 0 $ & $ 1 $ & $ 2 $ & $ 3 $ & $ 4 $ &  \dots\\ \hline
$\dim Q(M,\CL^k)^{\T}  $ & \dots & $-1$ & $ 0 $ & $ -1 $ & $ 0 $ & $ 1 $ & $ 0 $ & $ 1 $ & $ 0 $ &  \dots\\ \hline
$\dim Q(M,\CL^k)^{\mathrm{SU}(2)}  $ & \dots & $0$ & $ 0 $ & $ -1 $ & $ 0 $ & $ 1 $ & $ 0 $ & $ 0 $ & $ 0 $ &  \dots\\ \hline
 \end{tabular}
  \end{center}
Thus we see that
\begin{itemize}
\item $\dim Q(M,\CL^k) = k+1$; it is thus a polynomial for all $k\in\Z$.
\item $\dim Q(M,\CL^k)^{\T} =
  \begin{cases}
    1,\;\text{if }\,0\leq k\text{ is even},\\
- 1,\;\text{if }0>k\text{ is even},\\
0,\;\text{if }k\text{ is odd}.
  \end{cases}
$ \\
In particular, this is a quasi-polynomial for all $k\geq0$.
\item $\dim Q(M,\CL^k)^{\mathrm{SU}(2)} $ is, however, only quasi-polynomial
  for $k\geq1$, and $\dim Q(M,(\CL^{-1})^k)^{\mathrm{SU}(2)} $ is {\em
    not} quasi-polynomial for $k\geq1$.
\end{itemize}

\end{example}

This last example shows, that, in general, $\dim Q(M, \CL^k)^G$ is not
quasi-polynomial for small $k$.  To obtain a stronger statement, we
introduce a key condition on $\CL$.

\begin{definition}  \label{defpositive}
  Given an almost complex manifold $(M,J)$, we say that a line bundle
  $\CL$ over $M$ is {\em positive} if for an Hermitian structure on $\CL$, and a
  compatible connection $\nabla$, the corresponding curvature
  $-i\Omega$ satisfies
  \begin{equation}
    \label{poscond}
      \Omega_q(V,JV)>0\quad \text{for all }0\neq V\in \T_qM
  \end{equation}
at every point $q\in M$.
\end{definition}
\begin{remark}
Note that in this case, $\Omega$ is a symplectic form on $M$.
\end{remark}

One can arrive at the same setup starting at the other end: let
$(M,\Omega)$ be a symplectic manifold endowed with a line bundle $\CL$,
whose curvature is $-i\Omega$. Such an object is called a
prequantizable symplectic manifold endowed with a Kostant line bundle
\cite{kos}. In this case, one can choose a unique (up to homotopy)
almost complex structure $J$ such that the quadratic form
$V\mapsto\Omega_q(V,JV)$ is positive definite at each point $q\in M$,
and thus one arrives at the situation described in Definition
\ref{defpositive}.  In addition, if such a Kostant line bundle $\CL$ is
endowed with a $G$-action and a $G$-invariant connection, then the
virtual representation space $Q(M,\CL)$ does not depend on the choice of such
a (positive) $G$-invariant almost complex structure $J$.

Now we are ready to formulate the statement for which we
give a new proof in this article.
(As we explain below, this theorem may be obtained as a corollary of results
of \cite{mei-sja}.)
\begin{theorem}\label{GS}
  Let $(M,J)$ be a compact, connected, almost complex manifold endowed
  with the action of a connected compact Lie group $G$, and let $\CL$
  be a positive $G$-equivariant line bundle on $M$. Assume that the set of
  fixed points under the action of the maximal torus $T$ of $G$ on $M$
  is finite.  Then
  \begin{itemize}
  \item the integer function
\[k\to \dim   Q(M,\CL^k)^G
\] is quasi-polynomial  for $k\geq 1$, and
\item this quasi-polynomial is identically zero if $0\notin \mu_G(M)$.
  \end{itemize}
\end{theorem}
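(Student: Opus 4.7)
The plan is to combine formula~\eqref{reduct} with Paradan's decomposition (Proposition~\ref{paradan}) and the classical quasi-polynomiality of Kostant partition functions. First, I would expand the Weyl denominator via Weyl's identity
\[
 \prod_{\alpha\in\roots^-}(1-t^\alpha) = \sum_{w\in W}(-1)^{\ell(w)}\,t^{\rho-w\rho},
\]
which combined with~\eqref{reduct} yields
\[
 \dim Q(M,\CL^k)^G = \sum_{w\in W}(-1)^{\ell(w)}\,\F\chi_{\CL^k}(w\rho-\rho).
\]
It therefore suffices to prove, for each fixed weight $\lambda$, that $k\mapsto \F\chi_{\CL^k}(\lambda)$ is quasi-polynomial for $k\geq 1$, and that it vanishes at $\lambda=w\rho-\rho$ for every $w$ when $0\notin\mu_G(M)$.

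Next, I would apply Paradan's formula with a generic polarizing vector $\xi\in\t$. Since $M^T$ is finite, this rewrites $\chi_{\CL^k}(t)$ as an alternating sum over $p\in M^T$ of formal characters of the shape
\[
 \epsilon_p(\xi)\,t^{k\mu(p)}\prod_{i=1}^{n}\bigl(1-t^{\tilde\beta_i^p}\bigr)^{-1},
\]
where the $\beta_i^p$ are the weights of $T$ on $\tj_pM$ and each $\tilde\beta_i^p\in\{\pm\beta_i^p\}$ is the unique choice satisfying $\langle\tilde\beta_i^p,\xi\rangle<0$. The Fourier coefficient at $\lambda$ of such a series is the value at $\lambda-k\mu(p)$ of the Kostant partition function $\mathcal{K}_p$ associated to the configuration $\{\tilde\beta_i^p\}_{i=1}^n$, and by the classical theorem on partition functions each $\mathcal{K}_p$ is piecewise quasi-polynomial on the cone it generates. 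Thus quasi-polynomiality of $k\mapsto\F\chi_{\CL^k}(\lambda)$ for $k\geq 1$ reduces to checking that for every $p$ the arguments $\lambda-k\mu(p)$ lie inside a single chamber of $\mathcal{K}_p$ uniformly for $k\geq 1$.

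This last step is where the positivity assumption enters. By~\eqref{musat}, the Hessian of $\langle\mu,\xi\rangle$ at a fixed point $p$ is $\Omega_p(\cdot,\xi\cdot)$, and~\eqref{poscond} together with the compatibility of $J$ forces the signs of the pairings $\langle\beta_i^p,\xi\rangle$ to be correlated with the contribution of $\mu(p)$ along $\xi$. I would use this to show that, after a single choice of $\xi$, the vector $-\mu(p)$ lies in the interior of the cone generated by $\{\tilde\beta_i^p\}$ for every $p$; since $\lambda$ ranges in a bounded set, the point $\lambda-k\mu(p)$ then sits in a fixed open chamber of $\mathcal{K}_p$ for all $k\geq 1$, yielding the quasi-polynomial formula. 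For the vanishing statement, if $0\notin\mu_G(M)$, compactness of $M$ produces a hyperplane separating $0$ from $\mu(M^T)$; choosing $\xi$ normal to this hyperplane, one can arrange that every argument $w\rho-\rho-k\mu(p)$ with $k\geq 1$ lies outside the support cone of the corresponding $\mathcal{K}_p$, so each term in the sum vanishes.

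The main obstacle I expect is this last chamber argument: rigorously translating the positivity of $\CL$ into a precise chamber condition that is uniform over all $p\in M^T$ and all $k\geq 1$, and simultaneously compatible, in the vanishing case, with a separating hyperplane for $\mu(M)$. A delicate point is whether a single polarizing vector $\xi$ suffices; if not, I would invoke the more refined form of Paradan's formula built from a slicing family of subtori of $T$, in order to push the quasi-polynomial bound all the way down to $k=1$ and to make the vanishing argument work simultaneously at every fixed point.
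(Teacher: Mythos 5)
Your reduction has a fundamental gap that the paper itself flags in its introduction. You propose to prove that $k\mapsto\F\chi_{\CL^k}(\lambda)$ is quasi-polynomial for $k\geq 1$ for each fixed weight $\lambda=w\rho-\rho$ individually, and then sum over $w$. But this is exactly the naive route the paper warns against (``the individual terms are no longer polynomial for small values of $k$, and polynomiality is the result of a complicated web of cancelations''). For fixed $\lambda\neq 0$, the argument $\lambda-k\mu(p)$ passed to the partition function $\F\Theta[\Phi_p\felarrow Y]$ traces an affine (not linear) ray as $k$ varies; since the tope walls pass through the origin of $\t^*$, this ray crosses walls at $k$-values determined by $\lambda$ and $\Phi_p$, with no reason those crossings should all occur at $k\leq 0$. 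Only for $\lambda=0$ does the ray $-k\mu(p)$ stay in one tope for all $k>0$, which is why the abelian case is straightforward. Consequently the individual $\F\chi_{\CL^k}(w\rho-\rho)$ need not be quasi-polynomial for $k\geq 1$, and the quasi-polynomiality of $\dim Q(M,\CL^k)^G$ relies on cancellations across the Weyl sum that your decomposition discards. The paper handles this by \emph{not} splitting the Weyl sum: it rewrites $\dim Q(M,\CL^k)^G=\F\chi_{\CL^k\otimes\wedge^\bullet\n^-}(0)$, keeping the graded bundle $\wedge^\bullet\n^-$ as a structural object, and then works entirely at the weight $\lambda=0$.

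There are two further problems. First, what you call ``Paradan's formula'' is just the Atiyah--Bott fixed-point sum with a single polarizing vector, i.e.\ Corollary \ref{abcharcor}; Paradan's formula in this paper (Propositions \ref{paradandecomposition} and \ref{paradan}) is the finer decomposition indexed by the fixed-point components $\Sub(M)$ of subtori of $T$, in which the polarization direction $Y_C=\gamma_C-\gamma$ is chosen per component. That refined decomposition, together with the Clifford-multiplication vanishing argument of Theorem \ref{ParadanK} (which requires the $\wedge^\bullet\n^-$ factor, not individual $w$-shifts), is what the proof actually rests on. Second, your positivity claim---that for a single $\xi$, the vector $-\mu(p)$ lies in the interior of the cone generated by the re-oriented $\{\tilde\beta_i^p\}$ for every $p$---forces $\langle\mu(p),\xi\rangle>0$ for all $p\in F$; this is impossible whenever $0$ lies in the interior of $\mu(M)$, which is precisely the interesting case. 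It only holds in the separating-hyperplane situation $0\notin\mu_G(M)$. The paper's positivity input is of a different nature: Proposition \ref{crucial} shows that at $q$ with $\muperp(q)=0$, every root $\alpha$ with $(\mu(q),\alpha)>0$ occurs among the $T_q$-weights of $\TT^J_qM$. That inequality, inserted after the Paradan decomposition, gives the eigenvalue estimate that kills the components $C$ with $0\notin A_C$ for all $k\geq 1$. Your closing remark about needing ``the more refined form of Paradan's formula built from a slicing family of subtori'' is in fact where the real work begins, but the proposal stops short of explaining how that would be used.
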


\begin{remark}
  Note that the condition of the finiteness of the $T$-fixed point set
  is not necessary. We chose to impose this condition solely to
  simplify the discussion. To prove the theorem in the case of
  non-isolated fixed points, one  needs to use the equivariant
  index formula of Atiyah-Segal-Singer \cite{Atiyah-Segal68}, instead
  of the Atiyah-Bott fixed point formula \cite{ati-bot2}.
\end{remark}

\subsection{The ideas of the proof} At first sight, the strategy seems
to be clear. The Atiyah-Bott formula gives an explicit formula for
$\chi_{\CL^k}$ as a sum of rational functions (cf.
\eqref{ab}). Choosing a generic direction in $\t$, we can expand these
rational functions into convergent series, obtaining a formula of the
form $\chi_{\CL^k}=\sum_{p\in F} e_{k\mu_p}\theta_p$, where $\theta_p$
is a formal character, whose coefficients are given by a partition
function, and $\mu_p$ is the weight of $\CL_p$ (cf. \eqref{exp2}). To
obtain a formula for $\dim Q(M,\CL^k)^G$ when $G$ is a torus group,
one simply needs to evaluate the constant term of this expansion. This
leads to a formula of the form
\[      \dim Q(M,\CL^k)^G = \sum_{p\in F} \F\theta_p(-k\mu_p),
\]
where $\F\theta_p(\lambda)$ stands for the multiplicity of $e_\lambda$
in $\theta_p$.
The contribution of each fixed point to this constant term is a
polynomial in $k$, and thus, in this case, the proof of polynomiality is
straightforward. 

When $G$ is a general connected compact group, then
we need to use  \eqref{reduct}, and we obtain a formula of the
form
\[      \dim Q(M,\CL^k)^G = \sum_{p\in F}
\sum_{J\subset\roots^{-}}(-1)^{|J|}\F\theta_p\left(-\sum_{\alpha\in J}\alpha-k\mu_p\right).
\] Here, because of the shifts by sums of negative roots, the
individual terms are no longer polynomial for small values of $k$, and
polynomiality is the result of a complicated web of cancelations.

The novel idea of Paradan, which goes back to the seminal paper of
Witten \cite{wit92}, is to use a certain combinatorial expansion of
the rational functions from the Atiyah-Bott fixed point formula, which
has terms expanded in different directions, always away from the
origin (cf. \eqref{expone}).  After resummation, one obtains a formula
(Proposition \ref{resumprop}), whose terms are parametrized by fixed
point sets of subtori of the maximal torus $T\subset G$. Finally, we
show that the polynomiality of $\dim Q(M,\CL^k)^G$ hinges on a
geometric statement about the weights of the action of these subtori
on the tangent space of $M$ (Proposition \ref{crucial}).

\subsection{Comments on [Q,R]=0 and polynomiality}
\label{sec:rempol}
Quantization commutes with reduction (or [Q,R]=0 for short) is the
principle that the virtual space $Q(M,\CE\otimes \CL^k)^G$ may be
identified with the virtual space of solutions of a Dirac operator
associated to a vector bundle of the form $\CE_0\otimes\CL_0^k$ on the
so-called {\em reduced space} $\mu_G^{-1}(0)/G$.  If this latter space
is smooth, then, using this principle and applying the Atiyah-Singer
formula \eqref{atiyahsinger} to the bundle $\CL_0^k$, we can conclude
that $\dim Q(M,\CL^k)^G$ depends polynomially on $k$. The
polynomiality  of this dimension function thus is a key manifestation of
the [Q,R]=0 principle.

The idea of [Q,R]=0 was introduced in \cite{Gui-ste} (cf.  \cite{sja}
and \cite{ver01} for more details and references) in the form of a
precise conjecture. The idea came from considering the case when $M$
is a complex projective $G$-manifold, $\CL$ is the ample bundle and
$\CE$ is trivial.  Then the $G$-action on $M$ may be extended to a
holomorphic action $G_\C\times M\to M$ of the complexification of the
compact Lie group $G$, and [Q,R]=0 follows from the fact that
(cf. \cite {mum-fog-kir}) the orbit of the set $\mu_G^{-1}(0)$ under
this complexified action of $G_\C$ is dense in $M$ if this orbit is nonempty.

If $0$ is a regular value of $\mu_G$, then the reduced space
$\mu_G^{-1}(0)/G$ is a symplectic orbifold equipped with a Kostant
line bundle $\CL_0$. Guillemin-Sternberg formulated the conjecture
that $Q(M,\CL)^G$ may be identified with $Q(\mu_G^{-1}(0)/G,\CL_0)$.

Meinrenken, in his first approach to the Guillemin-Sternberg
conjecture \cite{mei1}, determined the asymptotic behavior of $\dim
Q(M,\CL^k)^G$ for large $k$, under the assumption that $0$ is a
regular value of $\mu_G$. By a ``stationary phase" argument (that we
borrowed in part for our proof of Theorem \ref{first}), he showed that
$\dim Q(M,\CL^k)^G$ is indeed equal to $\dim
Q(\mu_G^{-1}(0)/G,\CL_0^k)$ for $k$ sufficiently large, and that the
equality holds for all $k\geq0$ if $G$ is abelian.  His results thus
imply that the Guillemin-Sternberg conjecture for general compact
connected Lie group $G$ is equivalent to the fact that $\dim
Q(M,\CL^k)^G$ is quasi-polynomial in $k$ for $k\geq 1$.

Meinrenken-Sjamaar in \cite{mei-sja} formulated the
Guillemin-Sternberg conjecture for the case when 0 is not necessarily a
regular value of the moment map, and, using techniques of symplectic
cutting, proved this more general statement.
 There
is also an analytic proof of this generalized Guillemin-Sternberg
conjecture by Tian and Zhang \cite{tia-zha}, and, later,  another proof by
Paradan \cite{par2} using transversally elliptic operators.  Theorem
\ref{GS} is a consequence of these results.

In the present paper, we prove that $\dim Q(M,\CL^k)^G$ is
quasi-polynomial in $k$ for $k\geq1$ directly, and without making the
assumption that $0$ is a regular value of the moment map.  Our main
purpose is to show that this result may be obtained from the
Atiyah-Bott fixed point formula for $\chi_{\CL^k}$, using Theorem
\ref{first} as the only analytic input. The ideas underlying our paper
originated in the works of Paradan \cite{par2,parjump}.

\subsection{Contents of the paper}

The paper is structured as follows: in \S\ref{sec:fixed} we study the
calculus of expansions of the rational sum expression given for
$\chiek$ by the Atiyah-Bott fixed point formula. The main result is
Corollary \ref{abcharcor}, which gives the answer in terms of
partition functions.  We then proceed to introduce a quasi-polynomial
character $\Delta_\mu[\CE,\a]$, which encodes the asymptotic behavior
of this expansion. We begin \S\ref{sec:decomp} by Paradan's
combinatorial formula decomposing a partition function in terms of
convolution products of partitions functions in lower dimensions. Then
we apply this formula to our geometric setup (Proposition
\ref{paradan}), which results in a decomposition of $\chie$ in terms
of certain formal characters, which are enumerated by fixed-point sets
of subtori of $\T$. This combinatorial decomposition is the
$K$-theoretical analogue of the stratification of the manifold $M$ via
the Morse function $\|\mu\|^2$ used by Witten \cite{wit92} to compute
intersection numbers on reduced spaces.

We finish the proof of Theorem \ref{GS} in \S\ref{sec:quasi} by
studying the terms of this expansion. We quickly reduce the final
result to a numerical statement regarding the weights of the $T$-action at
fixed point sets of subtori. This statement is then proved via a
``localization of positivity'' result: Proposition
\ref{crucial}. Finally, we give a quick proof of Theorem \ref{first}
in \S\ref{sec:appendix}.  A list of notations given in \S\ref{lon}
helps the reader to navigate through the
paper.  

{\bf Acknowledgement.}  We are grateful to the referee for careful
reading of the article, and useful suggestions.

\section{Fixed point formula and a  formal character}
\label{sec:fixed}

As in the previous section, let us begin with a connected,
compact, almost complex $\T$-manifold $M$, and a pair $(\CE,\CL)$,
consisting of a complex equivariant vector bundle and a line
bundle on $M$. We assume again that the $T$-fixed points are isolated.

In this section, we embark on the study of the sequence
of characters $\chiek$, $k=0,1,\dots$.

\subsection{The fixed point formula}
\label{sec:fixed-point-formula}

Our starting point is the Atiyah-Bott fixed point formula
\cite{ati-bot2}, which expresses $\chiek$ as a sum of contributions
associated to the fixed points of the $T$-action on $M$.

Before we proceed, we need to introduce notation and terminology for
sets with multiplicities, which we will call  {\em lists}.  A list
$\Phi$ thus consists of a set $\{\Phi\}$, and a multiplicity function
$m_\Phi:\{\Phi\}\to\Z_{>0}$. We will use the notation
$[\phi_1,\phi_2,\dots,\phi_N]$ for the list of elements
$\phi_1,...$. We will also write
\begin{itemize}
\item $\psi\in\Phi$ if $\psi\in\{\Phi\}$; \item if $\psi\in\Phi$
and $m_\Phi(\psi)>1$, then $\Phi-\{\psi\}$ will
  denote the list $\Phi$ with the multiplicity of $\psi$ decreased by
  1; if $m_\Phi(\psi)=1$, then  $\Phi-\{\psi\}$ will
  denote the list $\Phi$ with $\psi$ removed;
\item for a list $\Phi$ and a set $S$, we will write $\Phi\cap S$
  for the list with underlying set $\{\Phi\}\cap S$ and
  multiplicity function coinciding with that of $\Phi$ on this set;
we will write $\Phi\setminus S$
  for the list with underlying set $\{\Phi\}\setminus S$ and
  multiplicity function coinciding with that of $\Phi$ on this set;
\end{itemize}

Now, denote by $F$ the finite set of fixed points of the $T$-action on
$M$.  For each fixed point $p\in F$, the weights of the $\T$-action on
the fiber $\CE_p$ form a list, which we will denote by $\Psi_p$.  Let
 $\ccp$ be the function $\T\to\C$ obtained by taking the trace of the
$\T$-action on the fiber $\CE_p$. Thus we have
 $\ccp=\sum_{\eta\in\Psi_p}e_{\eta}$.
Similarly, we denote by $\Phi_p$ the list of $\T$-weights of the
complex vector space $\tjs_pM$.

 With these preparations we can state the
Atiyah-Bott fixed point formula for our case:
\begin{equation}
  \label{ab}
\chie = \sum_{p\in F} \frac{\ccp}{\prod_{\phi\in
\Phi_p}(1-e_{\phi})}.
\end{equation}
This is an equality between two functions defined on an open and dense
subset of $\T$.  Indeed, the right hand side is meaningful on the set
\[\{t\in \T|\,t^\phi\neq1\;\forall p\in F\text{ and
}\phi\in\Phi_p\},\] while the left hand side is regular on $\T$.

Let us see two  examples.
First, we return to our Example \ref{ponex1}.
\begin{example}
  \label{baby0} Let $M=P^1(\C)$ with the action of $\mathrm{U}(1)$ given by
  $t\cdot(x:y)=(tx:t^{-1}y)$, and let $\CL^k$ be the $k$th tensor
  power of the dual of the tautological line bundle $\CL$. There are $2$
  fixed points $p^+=(1:0)$ and $p^-=(0:1)$, and we have
$$\chi_{\CL^k}(t)=\frac{t^k}{(1-t^{-2})}+
\frac{t^{-k}}{(1-t^2)}.$$

The graph of the function $\F\chi_{\CL^k}$ is pictured below for $k=4$.
\medskip

\begin{tikzpicture}

\draw (0,0) -- (10,0);
\draw (5,-1) -- (5,1.3);

\fill[color=black]  (1,1) circle (.5mm);
\fill[color=black]  (3,1) circle (.5mm);
\fill[color=black]  (5,1) circle (.5mm);
\fill[color=black]  (7,1) circle (.5mm);
\fill[color=black]  (9,1) circle (.5mm);

\draw (1,0.1) -- (1,-0.1);
\draw (2,0.1) -- (2,-0.1);
\draw (3,0.1) -- (3,-0.1);
\draw (4,0.1) -- (4,-0.1);
\draw (6,0.1) -- (6,-0.1);
\draw (7,0.1) -- (7,-0.1);
\draw (8,0.1) -- (8,-0.1);
\draw (9,0.1) -- (9,-0.1);

\draw (1,-0.3) node {$-4$};
\draw (2,-0.3) node {$-3$};
\draw (3,-0.3) node {$-2$};
\draw (4,-0.3) node {$-1$};
\draw (5.2,-0.3) node {$0$};
\draw (6.2,-0.3) node {$1$};
\draw (7.2,-0.3) node {$2$};
\draw (8.2,-0.3) node {$3$};
\draw (9.2,-0.3) node {$4$};
\draw (5.2,1) node {$1$};

\end{tikzpicture}
  

\end{example}

\begin{example}
\label{flag}

Let $M$ be the flag variety of $\C^3$ endowed with the action of the
group $\mathrm{U}(3)$.  The subgroup $\T_u=\{(t_1,t_2,t_3);\;
t_1,t_2,t_3\in \mathrm{U}(1)\}\subset \mathrm{U}(3)$ of diagonal
matrices is the maximal torus of $U(3)$, and the weight lattice of
$\T_u$ has a canonical diagonal decomposition:
$\Z\theta_1+\Z\theta_2+\Z\theta_3$.  The coordinate flag $$\{\C
e_1\subset \C e_1\oplus \C e_2 \subset \C e_1\oplus \C e_2\oplus \C
e_3\}$$ is fixed under $\T_u$, and the rest of the fixed points in
$M^{\T_u}$ may be obtained by applying to this flag the elements of
the permutations group $\Sigma_3$ in a natural manner. We will use the
notation $w\in\Sigma_3\mapsto p_w\in M^{\T_u}$ for this
correspondence; in particular, the coordinate flag will be denoted by
$p_{123}$.

Consider the line
bundle $\CL$ induced from the character $t_1^4t_2^{-1}t_3^{-3}$ of $\T_u$. Then
\[  \chi_{\CL^k}(t_1,t_2,t_3)=\sum_{w\in \Sigma_3}
w*\frac{t_1^{4k}t_2^{-k}t_3^{-3k}}{(1-t_2/t_1)(1-t_3/t_2)(1-t_3/t_1)},
\]
where, again, $w*$ stands for the natural action of $\Sigma_3$ on the indices.

In what follows, we consider $\chi_{\CL^k}$ as a character of the
adjoint group $G$ of ${\mathrm U}(3)$.  Let $T$ be the maximal torus
of $G$, with Lie algebra $\t$.  Then $\t^*$ has basis the simple roots
$\alpha=\theta_1-\theta_2$ and $\beta=\theta_2-\theta_3$ and the
weight lattice $\Lambda$ of $T$ is $\Z \alpha+\Z\beta.$ The weight
$\mu_{123}$ of the bundle $\CL$ at $p_{123}$ is $4\alpha+3\beta.$ The
multiplicity function $\F\chi_{\CL}$ on $\Lambda$ then may be
represented as follows.

\medskip
\begin{tikzpicture}

\fill[color=black] (4,1.732) circle (.5mm);
\fill[color=black] (5,1.732) circle (.5mm);
\fill[color=black] (6,1.732) circle (.5mm);

\fill[color=black] (3.5,2.6) circle (.5mm);
\fill[color=black] (4.5,2.6) circle (.8mm);
\fill[color=black] (5.5,2.6) circle (.8mm);
\fill[color=black] (6.5,2.6) circle (.5mm);

\fill[color=black] (9.4,2.6) circle (1.2mm);
\draw (10,2.6) node {$=3$};

\fill[color=black] (3,3.464) circle (.5mm);
\fill[color=black] (4,3.464) circle (.8mm);
\fill[color=black] (5,3.464) circle (1.2mm);
\fill[color=black] (6,3.464) circle (.8mm);
\fill[color=black] (7,3.464) circle (.5mm);

\fill[color=black] (9.4,3.464) circle (.8mm);
\draw (10,3.464) node {$=2$};

\fill[color=black] (2.5,4.33) circle (.5mm);
\fill[color=black] (3.5,4.33) circle (.8mm);
\fill[color=black] (4.5,4.33) circle (1.2mm);
\fill[color=black] (5.5,4.33) circle (1.2mm);
\fill[color=black] (6.5,4.33) circle (.8mm);
\fill[color=black] (7.5,4.33) circle (.5mm);

\fill[color=black] (9.4,4.33) circle (.5mm);
\draw (10,4.33) node {$=1$};

\fill[color=black] (2,5.196) circle (.5mm);
\fill[color=black] (3,5.196) circle (.8mm);
\fill[color=black] (4,5.196) circle (1.2mm);
\fill[color=black] (5,5.196) circle (1.2mm);
\fill[color=black] (6,5.196) circle (1.2mm);
\fill[color=black] (7,5.196) circle (.8mm);
\fill[color=black] (8,5.196) circle (.5mm);

\fill[color=black] (1.5,6.062) circle (.5mm);
\fill[color=black] (2.5,6.062) circle (.8mm);
\fill[color=black] (3.5,6.062) circle (1.2mm);
\fill[color=black] (4.5,6.062) circle (1.2mm);
\fill[color=black] (5.5,6.062) circle (1.2mm);
\fill[color=black] (6.5,6.062) circle (1.2mm);
\fill[color=black] (7.5,6.062) circle (.8mm);
\fill[color=black] (8.5,6.062) circle (.5mm);

\fill[color=black] (2,6.928) circle (.5mm);
\fill[color=black] (3,6.928) circle (.8mm);
\fill[color=black] (4,6.928) circle (.8mm);
\fill[color=black] (5,6.928) circle (.8mm);
\fill[color=black] (6,6.928) circle (.8mm);
\fill[color=black] (7,6.928) circle (.8mm);
\fill[color=black] (8,6.928) circle (.5mm);

\fill[color=black] (2.5,7.794) circle (.5mm);
\fill[color=black] (3.5,7.794) circle (.5mm);
\fill[color=black] (4.5,7.794) circle (.5mm);
\fill[color=black] (5.5,7.794) circle (.5mm);
\fill[color=black] (6.5,7.794) circle (.5mm);
\fill[color=black] (7.5,7.794) circle (.5mm);

\draw  (7.97,7.94)  node {$\mu_{123}$};

\draw (5.5,5.4) node {$\alpha$};
\draw (4.6,5.45) node {$\beta$};

\begin{scope}[->,>=stealth,semithick]
\draw[->]  (5,5.196)   -- +(0:.93cm);
\draw[->]  (5,5.196)   -- +(120:.93cm);
\end{scope}

\end{tikzpicture}
\medskip

\end{example}

\subsection{The partition function}
\label{sec:partition-function}

Recall that $\chie$ is determined by its Fourier coefficients
$\F\chie:\Lambda\to\Z$, and that this latter function has finite
support in $\Lambda$.  Our immediate goal is to convert the equality
\eqref{ab} into an equality of two functions in the Fourier dual space
of $\Z$-valued functions on $\Lambda$. For this task, we follow the
same method as \cite{gui-ler-ste}, \cite{gui-pra}.

 In this paragraph, we make
the additional assumption that the generic stabilizer of the
$\T$-action on $M$ is finite; this is equivalent to the condition that
$\Phi_p$ spans $\t^*$ for all $p\in F$.

Before we proceed, we need to introduce a few basic notions.
\begin{itemize}
\item We denote by $R(\T)$ the set of finite integral linear
  combinations of the characters $e_{\lambda},
  \,\lambda\in\Lambda$,  and
\item by $\hat R(\T)$ the space of formal, possibly infinite, integral
  linear combinations of these characters. Thus the elements of $\hat
  R(\T)$ are in one-to-one correspondence with the functions
  $m(\lambda):\Lambda\to\Z$ via $\theta:=\sum_{\lambda\in \Lambda}
  m(\lambda) e_{\lambda}\in\hat R(\T)$. We will write $\F\theta$ for
  the function $m$ in this case. Conversely, given a function $m$, we
  will call the corresponding series $\theta$ its character. If we
  extend the weights $\lambda\in\Lambda$ to linear functions on
  $\t_\C$, then we can also think of the elements of $\hat R(\T)$ as
  formal series of holomorphic exponential functions on $\t_{\C}$.
\item  Informally, we will call $\delta\in\rhat$ a {\em quasi-polynomial
    character} if its Fourier transform $\F\delta:\Lambda\to\Z$ is
  quasi-polynomial (cf. Definition \ref{defquasi}).
\end{itemize}

We collect some simple observations needed later.

\begin{lemma} \label{rprops}
  \begin{enumerate}
  \item  $\hat R(\T)$ is a module over $R(\T)$, and the set of
quasi-polynomial characters forms a linear subspace in $\rhat$ which is
stable under multiplication by $R(\T)$.
\item Elements of $\hat R(\T)$ whose Fourier transforms are supported
  on a fixed acute cone in $\Lambda$ may be multiplied, thus they form
  a ring.
\item For $\Theta\in \hat R(T)$ and $\lambda,\mu \in \Lambda$, we have
  $\F(e_\mu \Theta)(\lambda)=\F\Theta(\lambda-\mu)$.
\item If a quasi-polynomial function $f$ on $\Lambda$ vanishes at all
  points of a set $Q\cap \Lambda$, where $Q$ is a non-empty  open cone, then
  $f=0$.
  \end{enumerate}
\end{lemma}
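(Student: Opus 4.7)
The plan is to handle the four parts of Lemma~\ref{rprops} in the order (3), (1), (2), (4), since (3) is purely bookkeeping and underwrites the quasi-polynomial claim in (1), while (4) is the only part requiring a genuine argument.

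Part (3) follows by unwinding the definitions: the multiplication on $\rhat$ extends $e_\mu e_\nu = e_{\mu+\nu}$ linearly, so $e_\mu\Theta = \sum_\nu \F\Theta(\nu)\,e_{\mu+\nu}$, and reindexing with $\lambda=\mu+\nu$ yields the shift formula. For (1), that $\rhat$ is an $R(\T)$-module follows because multiplication by a single $e_\mu$ is merely a Fourier-side shift, which is obviously well-defined on formal series, and any element of $R(\T)$ is a \emph{finite} linear combination of such shifts, so no convergence issue arises. The stability of quasi-polynomial characters under $R(\T)$ then follows from (3): if $\F\Theta$ agrees with a polynomial on each coset of a finite-index sublattice $\Xi_0\subset\Lambda$, then the shift $\F\Theta(\,\cdot-\mu)$ agrees with a translated polynomial on each coset of the same $\Xi_0$, so it is again quasi-polynomial; linear combinations preserve this.

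For (2), the relevant fact about an acute cone $C$ (a convex cone containing no line) is that its dual $C^\vee$ has non-empty interior, so there exists $\xi$ with $\xi(v)>0$ for all $0\neq v\in C$. Hence for any target $\lambda$, the set of $\mu\in C\cap\Lambda$ with $\lambda-\mu\in C$ satisfies $0\le\xi(\mu)\le\xi(\lambda)$, so it is finite. Consequently the convolution coefficient $\F(\Theta_1\Theta_2)(\lambda)=\sum_\mu \F\Theta_1(\mu)\F\Theta_2(\lambda-\mu)$ is a finite sum and is well defined; associativity and distributivity are then formal consequences of the $\Z$-linearity of convolution.

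The substantive step, and the one I expect to be the main obstacle, is (4). The plan is to restrict to each coset $\nu+\Xi_0$ of the finite-index sublattice $\Xi_0\subset\Lambda$ on which the quasi-polynomial $f$ agrees with a polynomial $p_\nu$, and show $p_\nu\equiv 0$. This reduces the problem to the classical statement that a polynomial on $\Lambda\otimes\R$ vanishing on the lattice points of $\nu+\Xi_0$ that lie inside a non-empty open cone $Q$ must be identically zero. The key input is that $Q$, being an open cone, contains balls of arbitrarily large radius, hence the number of points of $(\nu+\Xi_0)\cap Q$ in a ball of radius $N$ grows like $c\,N^{\rank\Xi_0}$, which for large $N$ exceeds the number of zeros any fixed-degree nonzero polynomial can have on a generic linear slice. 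A clean formalization is by induction on $\rank\Xi_0$: pick a direction $v\in Q$ and slice $\Lambda\otimes\R$ by hyperplanes transverse to $v$; each slice meets $Q$ in a non-empty open cone of lower rank, the inductive hypothesis kills $p_\nu$ on each such slice, and finally the vanishing of a univariate polynomial at infinitely many points in the $v$-direction forces $p_\nu=0$. Doing this for every coset $\nu$ gives $f\equiv0$.
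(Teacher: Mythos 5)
The paper dismisses this lemma with ``the proofs are straightforward and will be omitted,'' so there is no paper proof to compare with; I will assess correctness directly.

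Parts (1), (2) and (3) are fine. For (1), the one point you leave implicit is that two quasi-polynomial characters may a priori come with different finite-index sublattices, but intersecting them produces a common finite-index $\Xi_0$, so linear combinations are indeed quasi-polynomial. For (2), the argument via a linear functional $\xi$ strictly positive on the pointed cone, bounding $\xi(\mu)$ between $0$ and $\xi(\lambda)$, correctly reduces the convolution sum to a finite one.

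The argument for (4) has a genuine flaw. You assert that each hyperplane transverse to $v\in Q$ ``meets $Q$ in a non-empty open cone of lower rank.'' An affine hyperplane not through the origin cannot meet a cone in a cone, and a linear hyperplane through the origin may miss $Q$ entirely: take $Q=\{(x,y):x>0,\ |y|<x/2\}$, $v=(1,0)$, $H=\{x=0\}$; then $H\cap Q=\emptyset$. The preceding counting heuristic --- that the number of lattice points in $Q\cap B(0,N)$ eventually ``exceeds the number of zeros any fixed-degree nonzero polynomial can have'' --- also does not by itself close the gap in rank at least $2$, since a nonzero polynomial can vanish at infinitely many lattice points. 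That said, your reduction to ``a polynomial $p_\nu$ vanishing on $(\nu+\Xi_0)\cap Q$ is zero'' is the right move, and it can be finished cleanly as follows. Pick $v\in\Xi_0\cap Q$, possible since $\Xi_0$ has full rank and $Q$ is open. For any $a\in\nu+\Xi_0$ one has $a+tv=t(v+a/t)\in Q$ for all sufficiently large integers $t$, because $Q$ is open, conic and contains $v$; hence the univariate polynomial $t\mapsto p_\nu(a+tv)$ vanishes at infinitely many integers and is therefore identically zero, giving $p_\nu(a)=0$. Thus $p_\nu$ vanishes on the full-rank translate $\nu+\Xi_0$, and a polynomial vanishing on a full-rank lattice translate is the zero polynomial (restrict to lines in the direction of one basis vector of $\Xi_0$, then induct on rank). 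Applying this to each coset $\nu$ gives $f\equiv 0$.
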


The proofs are straightforward and will be omitted.  With these
preparations, we are ready to introduce the basic building block of
our constructions.  For a list of weights $\Phi$, we will need to
represent the function $\prod_{\phi\in \Phi}(1-e_\phi)^{-1}$ by an
element of $\hat R(T)$.  To this end, we can expand each factor of the
form $(1-e_\phi)^{-1}$ as a geometric series, but this product is only
meaningful in the ring $\hat R(T)$ if $\Phi$ lies in an acute cone. To
remedy this problem, we will reverse the signs of some of the vectors
in $\Phi$, which, in turn, necessitates the introduction of the notion
of polarization.

  Let $\Phi$ be a list of nonzero elements of $\Lambda$.  We will call
  $\Xpol\in \t$ {\em polarizing} for $\Phi$ if $\langle
  \phi,\Xpol\rangle \neq 0$ for every $\phi\in \Phi$. For nonempty
  $\Phi$ and polarizing $\Xpol$, split $\Phi$ in $\Phi=\Phi_+\cup
  \Phi_-$, where
  \[ \Phi_+=\{\phi\in \Phi \, | \, \langle \phi,\Xpol\rangle >0\}\quad
  \text{and} \quad \Phi_-= \{\phi\in \Phi\,|\, \langle \phi,\Xpol\rangle
  <0\},\] and introduce the formal character
 \begin{equation}\label{deftheta}
   \Theta[\Phi\felarrow \Xpol]=(-1)^{|\Phi_-|}
   \prod_{\phi\in\Phi_-}e_{-\phi}\times \prod_{\phi\in
     \Phi_-}\left(\sum_{k=0}^\infty e_{-k\phi}\right)
   \times\prod_{\phi\in
     \Phi_+}\left(\sum_{k=0}^\infty e_{k\phi}\right) .\end{equation}
 It is easy to verify that the products in this formula are
 meaningful, and hence
 the series $\Theta[\Phi\felarrow \Xpol]$ defines an element of $\hat
 R(\T)$. We also set  \mbox{$\Theta[\emptyset\felarrow \Xpol]=1$} for
 any $\Xpol\in \t$.

 The notation $\Theta[\Phi\felarrow \Xpol]$ represents the fact that
 we have reoriented the elements of $\Phi$ using $\Xpol$. Note,
 however, that $\Theta[\Phi\felarrow \Xpol]$ coincides with
 $\Theta[\Phi_Y]$, where $\Phi_Y$ is the reoriented list, up to a sign
 and a shift only. These are motivated by the following

\begin{lemma} \label{thetaconv}
  \begin{enumerate}
  \item $\F\Theta[\Phi\felarrow \Xpol]$ is supported on the pointed cone generated
    in $\t^*$ by the set $\Phi_{+}\cup (-\Phi_{-})$, in particular,
    apart from the origin, on the half-space $\{Y>0\}$.
  \item As a  formal character, $\Theta[\Phi \felarrow \Xpol]\in\rhat$
    satisfies \[ \Theta[\Phi\felarrow \Xpol]\cdot\prod_{\phi\in
      \Phi}(1- e_{\phi})\, =1.\]
  \item Considered as a series
    of holomorphic functions on the complexification $T_\C$ of the
    torus group $T$,  the series \eqref{deftheta}
    converges absolutely, in a neighborhood of the point
    $\exp(i\Xpol)\in T_\C$, to the function $\prod_{\phi\in
      \Phi}(1-e_\phi )^{-1}.$
  \end{enumerate}

\end{lemma}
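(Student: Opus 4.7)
The plan is to treat the three parts as essentially formal consequences of the definition of $\Theta[\Phi\felarrow \Xpol]$ as a product of shifted geometric series, combined with the observation that the polarizing vector $Y$ is what makes those series convergent or formally manipulable. The bookkeeping is mostly about the sign $(-1)^{|\Phi_-|}$ and the shift $\prod_{\phi\in \Phi_-}e_{-\phi}$, both of which are designed precisely so that the ``reversed'' geometric series for $\phi\in\Phi_-$ reassembles into $(1-e_\phi)^{-1}$.

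For part (1), I would just unwind the definition: a typical monomial in $\F\Theta[\Phi\felarrow \Xpol]$ has weight
\[ -\sum_{\phi\in\Phi_-}\phi \;+\; \sum_{\phi\in\Phi_+}k_\phi\phi \;+\; \sum_{\phi\in\Phi_-}k_\phi(-\phi), \qquad k_\phi\in\Z_{\geq 0}, \]
which is a nonnegative integer combination of the vectors in $\Phi_+\cup(-\Phi_-)$. Pairing with $Y$ gives a strictly positive value unless all $k_\phi=0$ \emph{and} $\Phi_-=\emptyset$, so the support lies in the cone generated by $\Phi_+\cup(-\Phi_-)$ and, apart from the origin, in $\{Y>0\}$. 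Because this cone is pointed (it sits in a half-space), the support is summable in $\hat R(\T)$ and products of such characters are well-defined by Lemma \ref{rprops}(2).

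For part (2), having secured the ring structure, the factorization of the defining product lets me verify the identity factor by factor. For $\phi\in\Phi_+$ the telescoping $(1-e_\phi)\sum_{k\geq 0}e_{k\phi}=1$ is immediate; for $\phi\in\Phi_-$ I would combine the corresponding sign, shift and series into one factor and compute
\[ (1-e_\phi)\cdot(-e_{-\phi})\sum_{k\geq 0}e_{-k\phi}\;=\;(1-e_{-\phi})\sum_{k\geq 0}e_{-k\phi}\;=\;1, \]
so the whole product equals $1$. This is where the shift $\prod_{\phi\in\Phi_-}e_{-\phi}$ earns its keep.

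For part (3), I evaluate the individual series at $t=\exp(i\Xpol)$. Using $e_\phi(\exp(iY))=e^{-\langle\phi,Y\rangle}$, the modulus is $<1$ for $\phi\in\Phi_+$ and $>1$ for $\phi\in\Phi_-$, so each series $\sum_{k\geq 0}e_{k\phi}$ (resp.\ $\sum_{k\geq 0}e_{-k\phi}$) converges absolutely at $\exp(iY)$; continuity of $Z\mapsto e^{-\langle\phi,Z\rangle}$ gives absolute convergence on a whole neighborhood of $\exp(iY)$ in $T_\C$, and the sum of each geometric series coincides with the corresponding rational expression, so the product is exactly $\prod_{\phi\in\Phi}(1-e_\phi)^{-1}$ by the same factorwise computation as in (2). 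The only even mildly delicate point is matching the formal identity of (2) with the analytic identity of (3); I expect no real obstacle, since in both cases the computation reduces to the single-factor identities above, and the proof is mainly about careful notation.
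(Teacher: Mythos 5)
Your argument is correct: all three parts follow by unwinding the definition \eqref{deftheta}, with the shift $\prod_{\phi\in\Phi_-}e_{-\phi}$ and the sign $(-1)^{|\Phi_-|}$ conspiring to turn the reversed geometric series $\sum_{k\geq 0}e_{-k\phi}$ into $(1-e_\phi)^{-1}$, and with the polarization by $Y$ guaranteeing pointedness of the cone (hence well-definedness of the formal products) and, in (3), the strict inequalities $|e_\phi(\exp(iY))|\neq 1$ needed for absolute convergence near $\exp(iY)$. The paper states that the proofs are straightforward and omits them, which is consistent with the direct factor-by-factor verification you give.
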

The proofs are straightforward and are left to the reader. Using these
facts, we can rewrite \eqref{ab} as follows.

\begin{corollary}[\cite{gui-ler-ste,gui-pra}] \label{abcharcor} For a
  vector $\Xpol$, which is polarizing for the union $\cup_{p\in
    F}\Phi_p$ of the lists $\Phi_p$, the following equality holds in
  $\hat R(\T)$:
 \begin{equation}
  \label{abchar}
\chie=\sum_{p\in F} \ccp \cdot\Theta[\Phi_p\felarrow \Xpol].
\end{equation}
 \end{corollary}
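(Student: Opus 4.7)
The plan is to combine the Atiyah-Bott formula \eqref{ab} with part (3) of Lemma \ref{thetaconv}, which identifies the formal character $\Theta[\Phi_p\felarrow Y]$ with the Laurent expansion of the rational function $\prod_{\phi\in\Phi_p}(1-e_\phi)^{-1}$ in a neighborhood of $\exp(iY)\in \T_\C$. The resulting equality of holomorphic functions near this point will then be lifted to an equality in $\hat R(\T)$ by uniqueness of Laurent expansion.

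Because $Y$ is polarizing for $\bigcup_{p\in F}\Phi_p$, for every $\phi$ appearing in some $\Phi_p$ we have $e_\phi(\exp(iY))=e^{i\langle\phi,Y\rangle}\neq 1$, so each denominator on the right-hand side of \eqref{ab} is nonvanishing at $\exp(iY)$. By Lemma \ref{thetaconv}(3), the formal series $\Theta[\Phi_p\felarrow Y]$ converges absolutely in an open neighborhood $U_p\subset \T_\C$ of $\exp(iY)$ to $\prod_{\phi\in\Phi_p}(1-e_\phi)^{-1}$; since $\ccp\in R(\T)$ is an entire Laurent polynomial on $\T_\C$, the product $\ccp\cdot\Theta[\Phi_p\felarrow Y]$ converges absolutely on $U_p$ to $\ccp\prod_{\phi\in\Phi_p}(1-e_\phi)^{-1}$.

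Setting $U:=\bigcap_{p\in F}U_p$ and summing over the finite set $F$, the formal element $\sum_{p\in F}\ccp\cdot\Theta[\Phi_p\felarrow Y]\in\hat R(\T)$ converges absolutely on $U$ to $\sum_{p\in F}\ccp\prod_{\phi\in\Phi_p}(1-e_\phi)^{-1}$, which by the Atiyah-Bott formula \eqref{ab} equals $\chie$ on $U$. On the other hand, $\chie\in R(\T)\subset\hat R(\T)$ is itself a Laurent polynomial, so its own formal Fourier expansion converges to $\chie$ on all of $\T_\C$, in particular on $U$.

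It remains to identify two formal characters in $\hat R(\T)$ given that both converge absolutely to the same holomorphic function on the open set $U\subset\T_\C$. This follows from the standard uniqueness of Laurent expansion on $\T_\C$: if a series $\sum_{\lambda\in\Lambda} c_\lambda e_\lambda$ converges absolutely to $0$ on a nonempty open subset of $\T_\C$, then $c_\lambda=0$ for every $\lambda$. Applied to the difference of the two series above, this forces their Fourier coefficients to agree, establishing \eqref{abchar}. The only subtle ingredient is this uniqueness statement, a consequence of the linear independence of the characters $\{e_\lambda\}_{\lambda\in\Lambda}$; everything else is a direct application of \eqref{ab} and Lemma \ref{thetaconv}.
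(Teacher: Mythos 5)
Your proof is correct, but it takes a genuinely different route from the paper's. The paper argues algebraically: it multiplies both sides of \eqref{abchar} by the common denominator $\prod_{p\in F}\prod_{\phi\in\Phi_p}(1-e_\phi)$, uses Lemma~\ref{thetaconv}(2) to collapse each $\Theta[\Phi_p\felarrow Y]$ against its own factor, observes that the result is the element of $R(\T)$ obtained by clearing denominators in \eqref{ab}, and finally invokes injectivity of multiplication by this product on the subspace of $\hat R(\T)$ of elements supported in a half-space bounded by a hyperplane orthogonal to $Y$. You instead use Lemma~\ref{thetaconv}(3): identify each $\Theta[\Phi_p\felarrow Y]$ with the absolutely convergent Laurent expansion of $\prod_{\phi\in\Phi_p}(1-e_\phi)^{-1}$ near $\exp(iY)\in\T_\C$, sum over $p$, invoke \eqref{ab}, and conclude by uniqueness of Laurent coefficients. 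The paper's version is purely formal and avoids any convergence reasoning; yours is perhaps more transparent about what the formal character is ``really'' encoding, at the cost of an analytic argument.

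One step you gloss over slightly: \eqref{ab} is stated in the paper as an equality of functions on an open dense subset of the real torus $\T$, whereas you invoke it on a neighborhood $U$ of $\exp(iY)$ inside the complexification $\T_\C$. This is easily justified --- $\chie$ is a finite Laurent polynomial hence holomorphic on all of $\T_\C$, the right-hand side of \eqref{ab} is holomorphic on the connected open complement of its polar divisor, and the two agree on the intersection with $\T$, a maximal totally real submanifold, so they agree on the whole complement by analytic continuation --- but you should say so, since it is the bridge between the stated formula and the point $\exp(iY)$ where Lemma~\ref{thetaconv}(3) gives you convergence. With that sentence added, your argument is complete.
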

\noindent Indeed, multiplying the right hand side of \eqref{ab} and
 \eqref{abchar} by 
\[\prod_{p\in F}\prod_{\phi\in\Phi_p}(1-e_\phi),\] we
 obtain the same result. On the other hand, it is easy to see that the
 operation of multiplication by this product is injective on the subspace of
 elements of $\rhat$ which are supported on a half-space bounded by a
 hyperplane orthogonal to $Y$.




\begin{remark}
  The function $\F\Theta[\Phi\felarrow \Xpol]:\Lambda\to\Z$,
  traditionally, has been called the {\em partition function}, since,
  assuming $\Phi=\Phi^+$, its value at $\mu$ equals the number of ways
  one can write $\mu$ as a nonnegative integral linear combinations of
  vectors from $\Phi$. In particular, the equality \eqref{abchar}
  applied to Weyl's formula for the characters leads to Kostant's
  formula for the multiplicity of a weight in an irreducible
  representation of a reductive Lie group.
\end{remark}

A key fact is that the Fourier transform $\F\Theta[\Phi\felarrow
\Xpol]$, as a function on $\Lambda$, is piecewise
quasi-polynomial. Let us describe this in more detail:
\begin{definition}\label{defphireg}
Given a list $\Phi$ spanning $\t^*$, we will call an element
$\gamma\in\t^*$ $\Phi$-{\em regular} if it is not the linear
combination of fewer than $\dim(\t)$ elements of $\Phi$.
\end{definition}

The set of $\Phi$-regular elements form the complement of a hyperplane
arrangement in $\t^*$, and we will use the term $\Phi$-{\em tope} for
the connected components of this set\footnote{We use the word tope, as our definition is
similar to the notion of tope in matroid theory.}. It will be convenient to use the
notation $\vv(\gamma)$ for the tope containing the $\Phi$-regular
element $\gamma$. Note that topes are open convex cones, which are
invariant under rescaling.

\begin{lemma}[\cite{DM}, see also \cite{CPV}] \label{qpoly} Let $\Phi$
  be a list of nonzero vectors spanning $\t^*$, let $\Xpol$ be a
  polarizing vector for $\Phi$, choose a $\Phi$-tope $\vv$. Then there
  exists a unique quasi-polynomial character $\qpol[\Phi\felarrow
  \Xpol,\vv]$, whose Fourier transform $\F\qpol[\Phi\felarrow
  \Xpol,\vv]$ coincides with $\F\Theta[\Phi\felarrow \Xpol]$ on
  $\Lambda\cap\vv$.
\end{lemma}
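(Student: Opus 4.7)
My strategy is to reduce the statement to the classical Dahmen--Micchelli structure theorem for Kostant partition functions, using a sign--shift manipulation to pass from $\Theta[\Phi\felarrow \Xpol]$ to the standard partition function, and then to extract uniqueness from Lemma \ref{rprops}(4).

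Set $\sigma=\sum_{\phi\in\Phi_-}\phi$ and $\Phi_Y=\Phi_+\cup(-\Phi_-)$. A direct manipulation of \eqref{deftheta} gives
\[
\Theta[\Phi\felarrow \Xpol]\;=\;(-1)^{|\Phi_-|}\,e_{-\sigma}\cdot \Theta[\Phi_Y],
\]
where $\Phi_Y$ now lies entirely in the open half-space $\{\langle\,\cdot\,,\Xpol\rangle>0\}$, so that $\F\Theta[\Phi_Y]=N_{\Phi_Y}$ is the classical Kostant partition function. Using Lemma \ref{rprops}(3), this translates to
\[
\F\Theta[\Phi\felarrow \Xpol](\mu)\;=\;(-1)^{|\Phi_-|}\,N_{\Phi_Y}(\mu+\sigma).
\]
Since negating a vector does not alter the line it spans, the hyperplane arrangements defining $\Phi$- and $\Phi_Y$-regularity coincide, so the chosen $\vv$ is simultaneously a tope for $\Phi_Y$. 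The Dahmen--Micchelli theorem \cite{DM} (see also \cite{CPV}) then furnishes a unique quasi-polynomial function $p_\vv:\Lambda\to\C$ with $p_\vv(\lambda)=N_{\Phi_Y}(\lambda)$ for $\lambda\in\vv\cap\Lambda$, and I would define
\[
\qpol[\Phi\felarrow \Xpol,\vv](\mu)\;:=\;(-1)^{|\Phi_-|}\,p_\vv(\mu+\sigma).
\]
This is a quasi-polynomial character by Lemma \ref{rprops}(1) and (3), and the required matching with $\F\Theta[\Phi\felarrow \Xpol]$ on $\vv\cap\Lambda$ reduces to the equality $p_\vv(\mu+\sigma)=N_{\Phi_Y}(\mu+\sigma)$ for $\mu\in\vv\cap\Lambda$; this follows from the extension property built into Dahmen--Micchelli, namely that $p_\vv$ continues to represent $N_{\Phi_Y}$ on a neighborhood of $\vv$ compatible with the shift by $\sigma$.

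Uniqueness is then immediate from Lemma \ref{rprops}(4): two quasi-polynomial characters whose Fourier transforms agree on the non-empty open cone $\vv\cap\Lambda$ differ by a quasi-polynomial function vanishing on that cone, hence vanishing identically. The main obstacle is thus the Dahmen--Micchelli theorem itself; standard routes are by induction on $|\Phi_Y|$ using the deletion relation $N_{\Phi_Y}(\lambda)-N_{\Phi_Y-\{\phi\}}(\lambda)=N_{\Phi_Y}(\lambda-\phi)$ together with careful bookkeeping of how topes decompose when a vector is removed, or alternatively by producing $p_\vv$ explicitly via a Jeffrey--Kirwan iterated residue adapted to the tope $\vv$, from which quasi-polynomiality and the boundary-extension property can be read off simultaneously.
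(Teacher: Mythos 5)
Your proposal is correct and follows the intended route: the paper gives no proof of this lemma, only the citation to Dahmen--Micchelli and de Concini--Procesi--Vergne, and your sign--shift identity $\Theta[\Phi\felarrow \Xpol]=(-1)^{|\Phi_-|}e_{-\sigma}\,\Theta[\Phi_Y]$ (with $\Phi_Y=\Phi_+\cup(-\Phi_-)$ and $\sigma=\sum_{\phi\in\Phi_-}\phi$) is exactly the right bookkeeping to reduce to the unpolarized case, with uniqueness correctly extracted from Lemma \ref{rprops}(4) and the observation that topes for $\Phi$ and $\Phi_Y$ agree. The one step you leave implicit and should spell out is the matching $p_\vv(\mu+\sigma)=N_{\Phi_Y}(\mu+\sigma)$ for $\mu\in\vv\cap\Lambda$: the shifted cone $\vv+\sigma$ is not itself a tope, but $-\sigma=\sum_{\phi\in\Phi_-}(-\phi)$ is a $\{0,1\}$-combination of vectors of $\Phi_Y$ and hence lies in the zonotope $B(\Phi_Y)=\{\sum_{\psi\in\Phi_Y}t_\psi\psi:\,0\le t_\psi\le 1\}$, so $\vv+\sigma\subset\vv-B(\Phi_Y)$. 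The sharp form of the Dahmen--Micchelli theorem asserts agreement of $N_{\Phi_Y}$ with its tope quasi-polynomial on all of $(\vv-B(\Phi_Y))\cap\Lambda$, not merely on $\vv\cap\Lambda$, and this containment is what makes your ``extension property'' remark precise.
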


\begin{remark}
  \label{topedegen}
  This lemma may be naturally extended to the situation when $\Phi$
  does not span $\t^*$. In this case, denoting the smallest linear
  subspace of $\t^*$ containing $\Phi$ by $\lspan(\Phi)$, the tope
  $\vv$ is in $\lspan(\Phi)$, and $\qpol[\Phi\felarrow
  \Xpol,\vv]$ is a function supported on $\lspan(\Phi)$, whose
  restriction to $\lspan(\Phi)$ is quasipolynomial. The degree of the
  quasi-polynomial $\F\qpol[\Phi\felarrow \Xpol,\vv]$ is equal to
  $|\Phi|-\dim \lspan(\Phi)$.
\end{remark}

\begin{example}\label{example:quasi}
  Let $\t^*=\R \alpha,\, \Lambda=\Z \alpha$, $\Phi=[\alpha]$ and let
  $\Xpol\in \t$ to be the vector satisfying $\langle
  \alpha,\Xpol\rangle =1$. Then $$\Theta[\Phi\felarrow
  \Xpol]=\sum_{k=0}^{\infty}e_{k\alpha}.$$ Then
  $\vv^+:=\{t\alpha,t>0\}$, $\vv^-:=\{t\alpha,t<0\}$ are topes. The
  function $\F\Theta[\Phi\felarrow \Xpol]$ coincides with the constant
  function $1$ on $\Z \alpha\cap \vv^+$ and with $0$ on $\Z \alpha\cap
  \vv^-$.  The character $\delta=\sum_{k\in \Z} e_{k\alpha}$ is
  quasi-polynomial as the multiplicity $\F\delta$ is the constant
  function $1$ on $\Z \alpha$.  Thus
\[\qpol[\Phi\felarrow
  \Xpol,\vv^+]=\sum_{k\in \Z} e_{k\alpha},\; \text{while }\qpol[\Phi\felarrow
  \Xpol,\vv^-]=0.\]
\end{example}

\subsection{The asymptotics of the character}
\label{sec:asympt-char}

We return to our geometric setup. We continue to assume that the torus
$\T$ acts on the compact almost complex manifold $M$ with a finite set
of fixed points. We consider a Hermitian $\T$-equivariant line bundle
$\CL$, a complex equivariant vector bundle $\CE$, and we study the character
$\chiek$.

We choose an equivariant Hermitian connection on $\CL$. Recall from
\S\ref{sec:thm} that $\mu(p)$, the value of the associated moment map
$\mu:M\to\t^*$ at a fixed point $p\in F$,  is the weight of the
$\T$-action on the fiber $\CL_p$.  Thus, in this instance, formula
\eqref{abchar} may be written in the form
 \begin{equation}
  \label{abcharlk}
\chi_{\CL}=\sum_{p\in F} e_{\mu(p)} \Theta[\Phi_p\felarrow \Xpol],
\end{equation}
 and hence
\begin{equation}
  \label{abcharlkfourier}
\F\chi_{\CL}(\lambda)=\sum_{p\in F} \F\Theta[\Phi_p\felarrow \Xpol](\lambda-\mu(p)).
\end{equation}

Now assume that the generic stabilizer of the action of $T$ on $M$ is
finite, or, equivalently, that $\Phi_p$ spans $\t^*$ for all $p\in
F$.  Then the moment map $\mu$ gives rise to a real affine hyperplane
arrangement whose complement is the open set
\begin{equation}
  \label{eqalcove}
\bigcap_{p\in
  F}\left\{\gamma\in\t^*|\,\gammap\text{ is }
\Phi_p\text{-regular}\right\}\subset\t^*.
\end{equation}  We will use the
  term {\em alcove} for the connected components of the set
  \eqref{eqalcove}.
 The alcoves are thus
  minimal nonempty intersections of the translated polyhedral cones
  $\vv+\mu(p)$, where $p\in F$, and $\vv$ is a tope of $\Phi_p$.  Just
  as in the case of topes, we will use the notation $\a(C)$ for the
  alcove containing the connected subset $C$ of the set \eqref{eqalcove}.

   \begin{remark}[\cite{atiyah,gs}]
     If $\CL$ is a positive line bundle (cf. Definition
     \ref{defpositive}), then $\mu(M)$ is the convex hull of the set
     of points $\{\mu(p);\;p\in F\}$, and the set (\ref{eqalcove}) is
     contained in the set of regular values of $\mu$.
\end{remark}

Next, we define a quasi-polynomial character
 $\loc_\mu[\CE,\a]$ by formally replacing the
generating function for the partition function $\Theta[\Phi_p\felarrow
\Xpol]$ in \eqref{abchar} by an appropriately chosen
quasi-polynomial $\delta[\Phi_p\felarrow \Xpol,\vv]$ (cf. Lemma \ref{qpoly}).

\begin{definition}\label{defDelta}
  Given a $\T$-equivariant vector bundle $\CE$ over $M$, and an alcove
  $\a\subset\t^*$, we define the formal character
  \begin{equation}
    \label{defD}
\loc_\mu[\CE,\a] = \sum_{p\in F}
  \ccp\cdot\delta[\Phi_p\felarrow \Xpol,\vv(\ap)],
      \end{equation}
where $\ccp$, as usual, stands for the sum of $T$-weights of the fiber $\CE_p$. 
\end{definition}
\begin{remark}\mbox{}
  Note that we omitted the dependence on $\Xpol$ in the notation (cf.
 Corollary \ref{indep}).
\end{remark}

The meaning of this object will become clear after Proposition \ref{Dchi}.
Note that since  $\loc_\mu[\CE,\a]$ is a linear combination of
quasi-polynomial characters, it is itself quasi-polynomial.

\begin{lemma}\label{restrict}
The quasi-polynomial
$\F\loc_\mu[\CL,\a]$ coincides with $\F\chi_{\CL}$ at all points of $\a\cap \Lambda$.
\end{lemma}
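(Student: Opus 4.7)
The plan is to unpack both sides of the claimed equality and reduce it to Lemma \ref{qpoly}. By Definition \ref{defDelta} applied to $\CE = \CL$, one has $\tau_p[\CL] = e_{\mu(p)}$, so
\[
\loc_\mu[\CL,\a] \;=\; \sum_{p\in F} e_{\mu(p)}\cdot \delta[\Phi_p\felarrow \Xpol,\vv(\a-\mu(p))].
\]
Taking Fourier transforms and using item (3) of Lemma \ref{rprops} (the shift rule), I would get
\[
\F\loc_\mu[\CL,\a](\lambda) \;=\; \sum_{p\in F} \F\delta[\Phi_p\felarrow \Xpol,\vv(\a-\mu(p))](\lambda-\mu(p)).
\]
On the other hand, equation \eqref{abcharlkfourier} says
\[
\F\chi_{\CL}(\lambda) \;=\; \sum_{p\in F} \F\Theta[\Phi_p\felarrow \Xpol](\lambda-\mu(p)),
\]
so it suffices to prove, term by term, that for every $p\in F$ and every $\lambda \in \a\cap\Lambda$ one has
\[
\F\delta[\Phi_p\felarrow \Xpol,\vv(\a-\mu(p))](\lambda-\mu(p))
\;=\;
\F\Theta[\Phi_p\felarrow \Xpol](\lambda-\mu(p)).
\]

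The key point is that this reduces directly to Lemma \ref{qpoly}, which asserts exactly this equality on $\Lambda\cap\vv$ for any $\Phi_p$-tope $\vv$. So what I need to verify is just that $\lambda-\mu(p)$ lies in the tope $\vv(\a-\mu(p))$. By the definition of alcove in \eqref{eqalcove}, $\a$ is a connected component of the open set on which each $\gamma-\mu(p)$ is $\Phi_p$-regular. Hence $\a - \mu(p)$ is a connected subset of the set of $\Phi_p$-regular vectors in $\t^*$, and therefore is entirely contained in one $\Phi_p$-tope, namely the tope $\vv(\a-\mu(p))$ used in Definition \ref{defDelta}. In particular, for $\lambda \in \a\cap\Lambda$ we have $\lambda - \mu(p) \in \vv(\a-\mu(p))\cap\Lambda$, which is exactly the set on which Lemma \ref{qpoly} gives the desired equality.

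There is essentially no analytic obstacle here; the statement is a bookkeeping consequence of the setup once one notices that the alcove decomposition is defined precisely so that, at each fixed point $p$, the shifted element $\lambda - \mu(p)$ stays in a single tope of $\Phi_p$ as $\lambda$ varies in $\a$. Summing the per-fixed-point identity over $p\in F$ immediately yields $\F\loc_\mu[\CL,\a](\lambda) = \F\chi_{\CL}(\lambda)$ for all $\lambda\in\a\cap\Lambda$, which is the assertion of the lemma. The only thing to be a bit careful about is the case in which the generic stabilizer is not finite; but under the standing assumption made at the start of \S\ref{sec:partition-function} that $\Phi_p$ spans $\t^*$, the topes and alcoves are well-defined open cones and the argument goes through as stated.
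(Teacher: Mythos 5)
Your proof is correct and follows essentially the same route as the paper: unpack $\loc_\mu[\CL,\a]$ via Definition~\ref{defDelta} and the shift rule, compare termwise with \eqref{abcharlkfourier}, and reduce to the observation that $\lambda-\mu(p)\in\vv(\ap)$ for $\lambda\in\a\cap\Lambda$, which is exactly what Lemma~\ref{qpoly} needs. You are only slightly more explicit than the paper in spelling out why the alcove definition guarantees $\a-\mu(p)$ lies in a single $\Phi_p$-tope, but the argument is the same.
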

\begin{proof}
Indeed, since $\tau_p(\CL)=\mu(p)$, we have
$$ \F\loc_\mu[\CL,\a](\lambda) = \sum_{p\in F}
\F\delta[\Phi_p\felarrow \Xpol,\vv(\ap)](\lambda-\mu(p)).$$ On the
other hand, by the definition of $\delta[\Phi\felarrow \Xpol,\vv]$, if
$\lambda$ belongs to the alcove $\a$, then
\[
\F\delta[\Phi_p\felarrow \Xpol,\vv(\ap)](\lambda-\mu(p))=
\F\Theta[\Phi_p\felarrow \Xpol](\lambda-\mu(p)).
\]
Now \eqref{abcharlkfourier} immediately implies the statement of the
Lemma.
\end{proof}

\begin{remark}
The finite set $\a\cap \Lambda$ may be small, even empty, hence we
cannot necessarily determine $\loc_\mu[\CL,\a]$ by restricting the
quasi-polynomial function $\F\loc_\mu[\CL,\a]$ to this set.
\end{remark}

 \begin{example}
\label{flagalcoves}
We return to Example \ref{flag}, with $\mu$ associated to the line
bundle $\CL$.  The diagram depicts the dual of the Lie algebra of the
maximal torus of the adjoint group of $U(3)$.  The straight lines cut
the plane into alcoves. The support of the multiplicity function
$\F\chi_{\CL}$ is the highlighted hexagon, and the function is
invariant under the symmetries of this hexagon.
\end{example}

\begin{tikzpicture}
\coordinate (a) at (3,1.3);
\coordinate (b) at (4.5,1.3);
\coordinate (c) at (6.375,4.5465);
\coordinate (d) at (5.625,5.8455);
\coordinate (e) at (1.875,5.8455);
\coordinate (f) at (1.125,4.5465);

\begin{scope}[ultra thin]
\draw (a)  -- +(-60:1.8cm);
\draw (a)  -- +(180:4.6cm);
\draw (a)  -- +(-120:1.8cm);
\draw (a)  -- +(60:7.2cm);

\draw (b)  -- +(-120:1.8cm);
\draw (b)  -- +(0:4.6cm);
\draw (b)  -- +(-60:1.8cm);
\draw (b)  -- +(120:7.2cm);

\draw (c)  -- +(60:2.8cm);
\draw (c)  -- +(-60:5.5cm);
\draw (c)  -- +(0:2.5cm);
\draw (c)  -- +(180:7.75cm);

\draw (d)  -- +(120:2cm);
\draw (d)  -- +(0:3.2cm);

\draw (e)  -- +(60:2cm);
\draw (e)  -- +(180:3.2cm);

\draw (f)  -- +(120:2.5cm);
\draw (f)  -- +(-120:5.6cm);
-- (2,1);
\end{scope}

\draw[ultra thick] (a) -- (b) -- (c) -- (d) -- (e) -- (f)
-- cycle;

\draw (3.8,3.9)   node {$\a_0$};
\draw (3.8,5.1)   node {$\a_1$};
\draw (5.65,4.9)   node {$\a_2$};
\draw  (6.2,6.05)   node {$\mu_{123}$};
\end{tikzpicture}

For this example,
the quasi-polynomials are polynomials, and can be guessed by ``interpolation''   from the picture of $\F\chi_{\CL}$ given in  Example \ref{flag}.
We have
 \begin{eqnarray*}
 \F\loc_\mu[\CL,\a_0](n_1\alpha+n_2\beta)&=&3,\\
 \F\loc_\mu[\CL,\a_1](n_1\alpha+n_2\beta)&=&4-n_2,\\
 \F\loc_\mu[\CL,\a_2](n_1\alpha+n_2\beta)&=&5-n_1.\\
  \end{eqnarray*}
\qed

We now replace $\CE$ by $\CEk$.
\begin{lemma}  \label{delqpol}
  The function $(\lambda,k)\mapsto \F\loc_\mu[\CEk, \a](\lambda)$ is
  quasi-polynomial on the lattice $\Lambda\times \Z$.
\end{lemma}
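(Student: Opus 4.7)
The plan is to track how each ingredient of Definition~\ref{defDelta} transforms when $\CE$ is replaced by $\CEk$. The tangent weights $\Phi_p$ and the alcove $\a$ (and hence the topes $\vv(\a-\mu(p))$) depend only on the underlying geometry and on $\CL$, not on $\CE$, so the quasi-polynomial characters $\delta[\Phi_p\felarrow Y,\vv(\a-\mu(p))]$ are unchanged. All the $k$-dependence enters through the fiber trace at a fixed point: since $\CL_p$ has $\T$-weight $\mu(p)\in\Lambda$, the fiber $\CE_p\tensor\CL_p^{\otimes k}$ has weights $\{\eta+k\mu(p):\eta\in\Psi_p\}$, whence $\tau_p[\CEk]=e_{k\mu(p)}\cdot\ccp$.

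Substituting into \eqref{defD} and applying Lemma~\ref{rprops}(3), I obtain
\[
\F\loc_\mu[\CEk,\a](\lambda) \;=\; \sum_{p\in F} g_p(\lambda-k\mu(p)),
\]
where $g_p:=\F\bigl(\ccp\cdot\delta[\Phi_p\felarrow Y,\vv(\a-\mu(p))]\bigr)$ is a quasi-polynomial function on $\Lambda$: the character $\delta[\Phi_p\felarrow Y,\vv(\a-\mu(p))]$ is quasi-polynomial by Lemma~\ref{qpoly}, and multiplication by the finite character $\ccp\in R(\T)$ preserves quasi-polynomiality by Lemma~\ref{rprops}(1).

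The claim then reduces to the following elementary closure property, which I would verify directly: for any quasi-polynomial $g:\Lambda\to\C$ and any $w\in\Lambda$, the function $(\lambda,k)\mapsto g(\lambda-kw)$ is quasi-polynomial on $\Lambda\times\Z$. Indeed, picking a finite-index sublattice $\Lambda_0\subset\Lambda$ on whose cosets $g$ restricts to polynomials, and choosing a positive integer $d$ with $dw\in\Lambda_0$, the sublattice $\Lambda_0\times d\Z\subset\Lambda\times\Z$ has finite index, and on each of its cosets the substitution $\lambda\mapsto\lambda-kw$ sends $g$ to a polynomial in $(\lambda,k)$. A sum of finitely many quasi-polynomials is again quasi-polynomial (intersect the sublattices), completing the argument. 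The entire proof is mechanical once the shift identity $\tau_p[\CEk]=e_{k\mu(p)}\ccp$ is in place, so there is no real obstacle here; the lemma is essentially a bookkeeping consequence of the definition.
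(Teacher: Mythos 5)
Your proof is correct and follows essentially the same route as the paper: compute $\tau_p[\CEk]=e_{k\mu(p)}\ccp$, apply the shift identity from Lemma~\ref{rprops}(3), and reduce to stability of quasi-polynomiality under the affine substitution $\lambda\mapsto\lambda-k w$. The only cosmetic difference is packaging — the paper expands $\ccp=\sum_{\eta\in\Psi_p}e_\eta$ and shifts each term by $\eta+k\mu(p)$, whereas you absorb $\ccp$ into the quasi-polynomial $g_p$ via Lemma~\ref{rprops}(1) and shift only by $k\mu(p)$ — and you spell out the closure property that the paper leaves implicit (choosing $d$ with $dw\in\Lambda_0$ so that $\Lambda_0\times d\Z$ works), which is a welcome extra degree of care.
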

\begin{proof}

Recall that, for $p\in F$, we denoted by  $\Psi_p$ the list of
$T$-weights of the fiber $\CE_p$, and we set $\ccp=\sum_{\eta\in\Psi_p}e_\eta$.
Clearly,  we have $\tau_p[\CEk]=\ccp\cdot  e_{k\mu(p)}$.
For a formal character
$\theta\in\hat R(\T)$ and $\lambda,\mu\in\Lambda$, the identity
$\F e_{k\mu}\theta(\lambda)=\F\theta(\lambda-k\mu)$ holds. This implies that 
$$ \F\loc_\mu[\CEk, \a](\lambda)=
\sum_{\eta\in\Psi_p}\F\delta[\Phi_p\felarrow \Xpol,\ap](\lambda-\eta-k\mu(p)).$$

As $\delta$ is a quasi-polynomial character, each term on the right hand side is a
quasi-polynomial function of $(\lambda,k)$, and this completes the proof.
\end{proof}

For small $k$, in particular for $k=0$, $\loc_\mu[\CEk,\a]$ does
not have any direct relationship with $\chiek$.
We have, nevertheless, the following asymptotic analog of
Lemma \ref{restrict}.

\begin{proposition} \label{Dchi} Let $\b$ be a compact subset of an
  alcove $\a$. Then there exists a positive integer $K$ such that for
  every $k>K$ and $\lambda\in k\b\cap \Lambda$, the equality
 \begin{equation}
   \label{Deltachi}
\F\loc_\mu[\CEk,\a](\lambda) = \F\chiek(\lambda)
 \end{equation}
holds.
\end{proposition}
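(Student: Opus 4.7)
The plan is to compare, term by term, the fixed-point expansions of $\chi_{\CEk}$ and of $\loc_\mu[\CEk,\a]$. By Corollary \ref{abcharcor} applied to $\CEk$,
\[
\F\chi_{\CEk}(\lambda)=\sum_{p\in F}\sum_{\eta\in\Psi_p}\F\Theta[\Phi_p\felarrow \Xpol]\bigl(\lambda-\eta-k\mu(p)\bigr),
\]
while Definition \ref{defDelta} combined with the computation in the proof of Lemma \ref{delqpol} gives
\[
\F\loc_\mu[\CEk,\a](\lambda)=\sum_{p\in F}\sum_{\eta\in\Psi_p}\F\delta[\Phi_p\felarrow \Xpol,\vv(\ap)]\bigl(\lambda-\eta-k\mu(p)\bigr).
\]
By Lemma \ref{qpoly}, the two summands agree whenever $\lambda-\eta-k\mu(p)\in\Lambda\cap\vv(\ap)$. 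So the proposition reduces to the following geometric claim: for every $p\in F$ and every $\eta\in\Psi_p$, if $k$ is large enough and $\lambda\in k\b\cap\Lambda$, then $\lambda-\eta-k\mu(p)\in\vv(\ap)$.

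To prove the claim, write $\gamma=\lambda/k\in\b$ and factor
\[
\lambda-\eta-k\mu(p)=k\bigl[(\gamma-\mu(p))-\eta/k\bigr].
\]
By the definition of the tope attached to an alcove, $\a-\mu(p)\subset\vv(\ap)$, hence $\b-\mu(p)$ is a compact subset of the open set $\vv(\ap)$. Consequently there exists a radius $r_p>0$ such that the $r_p$-neighborhood of $\b-\mu(p)$ still lies inside $\vv(\ap)$. Choosing $K$ so large that $\max_{p,\eta\in\Psi_p}|\eta|<K\cdot\min_p r_p$, we obtain, for every $k>K$, that $(\gamma-\mu(p))-\eta/k$ lies in $\vv(\ap)$. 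Since topes are open cones invariant under positive rescaling (as noted after Definition \ref{defphireg}), multiplying by $k>0$ keeps us in $\vv(\ap)$, which proves the claim.

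Combining the claim with the term-by-term comparison above yields \eqref{Deltachi}. The only slightly delicate point is the uniformity of the constant $K$ over the finite sets $F$ and $\bigcup_p\Psi_p$ and over all $\gamma\in\b$; this is precisely where the compactness of $\b$ inside the open alcove $\a$ is used, and it is the main (mild) obstacle in the argument. Everything else is a formal manipulation of the two expansions together with the defining property of $\delta[\Phi_p\felarrow \Xpol,\vv]$.
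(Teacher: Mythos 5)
Your proof is correct and follows essentially the same route as the paper: reduce to the condition $\lambda-\eta-k\mu(p)\in\vv(\ap)$ for all $p,\eta$, then use the conic invariance of topes together with compactness of $\b$ (equivalently, of $\b-\mu(p)$) inside the open set $\a$ (equivalently, $\vv(\ap)$) to absorb the shifts $\eta/k$ uniformly for $k$ large. The only cosmetic difference is that the paper phrases the compactness step as $\b-\eta/k\subset\a$ for large $k$, whereas you work directly with an $r_p$-neighborhood of $\b-\mu(p)$ in $\vv(\ap)$; the two are the same argument.
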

\begin{proof}
Recall that $\Psi_p$ is the list of $T$-weights of the fiber $\CE_p$,
and $\ccp=\sum_{\eta\in\Psi_p}e_\eta$. 
According to \eqref{abchar}, we have
\[ \F\chiek(\lambda) = \sum_{p\in F}
\sum_{\eta\in\Psi_p}\F\Theta[\Phi_p\felarrow \Xpol](\lambda-\eta-k\mu(p)),
\]
while, by Lemma \ref{delqpol},
\[  \F\loc_\mu[\CEk,\a](\lambda) = \sum_{p\in F}
\sum_{\eta\in\Psi_p}\F\delta[\Phi_p\felarrow
\Xpol,\vv(\a-\mu(p))](\lambda-\eta-k\mu(p)).
\]
Hence, by the definition of the quasi-polynomial character
$\delta$ given in Lemma \ref{qpoly}, these two expressions
coincide as long as for each $p\in F$ and $\eta\in\Psi_p$, we have
$\lambda-\eta-k\mu(p)\in\vv(\a-\mu(p))$. Since topes are invariant under
rescaling,  we can conclude
that \eqref{Deltachi} holds if
\begin{equation}
  \label{rewr}
  \frac\lambda k-\frac\eta k\in\a\quad\text{for each }
\eta\in\cup_{p\in F}\Psi_p.
  \end{equation}

  As the set $\cup_{p\in F}\Psi_p$ is finite, for large enough $k$, we
  will have $\b-\eta/k\subset\a$ for every $\eta$ from this set. Hence
  \eqref{rewr} holds for large enough $k$, uniformly in
  $\lambda\in k\b\cap \Lambda$. This completes the proof.
\end{proof}
\begin{corollary} \label{indep} The quasi-polynomial character
  $\loc_\mu[\CE,\a]$ (cf. Definition \ref{defDelta}) does not depend
  on the choice of the polarizing vector $\Xpol$.
\end{corollary}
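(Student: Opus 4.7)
The plan is to leverage the fact that $\chiek$ is intrinsic to the setup — it carries no reference to $\Xpol$ — while Proposition \ref{Dchi} already identifies $\loc_\mu[\CEk,\a]$ with $\chiek$ on a large set of pairs $(\lambda,k)$. Combined with the joint quasi-polynomiality of $\loc_\mu[\CEk,\a]$ in $(\lambda,k)$ provided by Lemma \ref{delqpol}, this asymptotic agreement will rigidify into a global identity, and in particular specialize at $k=0$ to the desired $\Xpol$-independence.

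Concretely, fix two polarizing vectors $\Xpol_1,\Xpol_2 \in \t$ for $\cup_{p\in F}\Phi_p$, and denote by $\loc^{(1)}_\mu[\CEk,\a]$ and $\loc^{(2)}_\mu[\CEk,\a]$ the characters produced by \eqref{defD} using each. By Lemma \ref{delqpol}, each $\F\loc^{(i)}_\mu[\CEk,\a](\lambda)$ is a quasi-polynomial function on $\Lambda\times\Z$, hence so is their difference
\[
D(\lambda,k) \;:=\; \F\loc^{(1)}_\mu[\CEk,\a](\lambda) \;-\; \F\loc^{(2)}_\mu[\CEk,\a](\lambda).
\]
Choose any compact $\b\subset\a$ with nonempty interior. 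Applying Proposition \ref{Dchi} separately to each $\Xpol_i$ furnishes an integer $K$ such that for all $k>K$ and $\lambda\in k\b\cap\Lambda$, both $\F\loc^{(i)}_\mu[\CEk,\a](\lambda)$ coincide with the common value $\F\chiek(\lambda)$; consequently $D$ vanishes on $S := \{(\lambda,k)\in\Lambda\times\Z : k>K,\ \lambda\in k\b\}$.

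It remains to argue that $D\equiv 0$ on all of $\Lambda\times\Z$; this is the main step. Consider the open cone $Q := \{(x,t)\in\t^*\times\R : t>0,\ x/t\in\mathrm{int}(\b)\}$, which is nonempty because $\mathrm{int}(\b)$ is nonempty. Its intersection with the lattice $\Lambda\times\Z$ is contained in the zero set of $D$, apart from the finitely many slices $k\in\{1,\dots,K\}$; removing those leaves a set still Zariski-dense in $Q$. Since $D$ is quasi-polynomial, it restricts on every coset of a finite-index sublattice to the trace of a polynomial on $\t^*\times\R$; that polynomial has Zariski-dense zeros in $Q$ and therefore vanishes identically (this is exactly the mechanism of Lemma \ref{rprops}(4), applied to the lattice $\Lambda\times\Z$). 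Hence $D\equiv 0$, and specializing to $k=0$ gives $\loc^{(1)}_\mu[\CE,\a]=\loc^{(2)}_\mu[\CE,\a]$, as required. The crux of the argument is precisely this interpolation: Proposition \ref{Dchi} is purely asymptotic, and it is the global quasi-polynomial control in $(\lambda,k)$ that allows one to descend to the $k=0$ term where the original statement lives.
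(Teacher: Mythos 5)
Your argument is correct and follows the paper's own proof exactly: both invoke Lemma \ref{delqpol} for joint quasi-polynomiality in $(\lambda,k)$, use Proposition \ref{Dchi} (which is oblivious to $\Xpol$) to get agreement with $\F\chiek$ on the asymptotic region $k\b$, $k\gg 0$, and then apply the rigidity mechanism of Lemma \ref{rprops}(4) to force the difference to vanish identically before specializing to $k=0$. You are somewhat more careful than the paper in spelling out why the truncated cone $\{k>K\}\cdot\b$ still pins down the quasi-polynomial, which is a worthwhile clarification but not a different route.
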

Indeed, note that Proposition \ref{Dchi} holds independently of the
vector $Y$ chosen to define $\loc_\mu[\CEk,\a]$, and, according to
Lemma \ref{delqpol}, $\F\loc_\mu[\CEk,\a]$ is quasi-polynomial on
$\Lambda\times \Z$. Now, by choosing an appropriate $\b$ with nonempty
interior in Proposition \ref{Dchi}, one can conclude that this
quasipolynomial restricted to $\{k;\;k>K\}\b\cap \Lambda$ is the same
for all choices of polarizing vectors $Y$. Now, the statement follows, since the
restriction to such an open set determines a quasipolynomial
(cf. Lemma \ref{rprops} (4)).
\medskip

Let us summarize what Proposition \ref{Dchi} says about
$\F\loc_\mu[\CE,\a]$. Consider the function $(k,\lambda)\mapsto
\F\chiek(\lambda)$, and interpolate its values on $\Z\times\Lambda$
from the values on the sets $k\b\cap \Lambda$ for $k$ sufficiently
large. This will result in a quasi-polynomial function, which is defined for all
$(k,\lambda)$. Then, the restriction to $k=0$ of this quasi-polynomial
function gives us our function $\F\loc_\mu[\CE,\a]$.

\bigskip

\begin{remark}
  One can give the following geometric interpretation to the character
  $\loc_\mu[\CE,\a]$ when the moment map $\mu: M\to \t^*$
  is associated to a positive line bundle.
  In this case, the curvature form $\Omega$ is non-degenerate, and $\mu$ is the moment
  map for the corresponding Hamiltonian structure on $M$.  Then any
  element $\gamma$ in an alcove $\mathfrak a$ is a regular value of
  $\mu$, and the torus $\T$ acts with finite stabilizers on
  $\mu^{-1}(\gamma)$.  The quotient $\mu^{-1}(\gamma)/\T$ is the same
  orbifold for all $\gamma\in\mathfrak a$, and thus
  we can denote it by $M_\a$.

  The bundle $\CE$ descends to an orbifold bundle $\CE_\a$ on $M_\a$,
  and each character $\lambda$ allows us to twist $\CE_\a$ by the
  associated line bundle $L_\lambda=\mu^{-1}(\gamma)\times_\T
  \C_{\lambda}$ over $M_{\a}$.  According to the index formula for
  orbifolds (\cite{atiyah-ell}, see also \cite{ver02}), the function
  $\lambda\rightarrow\dim Q(M_\a,\CE_\a\otimes L_\lambda)$ is
  quasi-polynomial. It can be easily shown using the results of
  \cite{mei1} that, in this setup, the character $\loc_\mu[\CE,\a]$
  appears as the generating function of this quasi-polynomial:
\[ \loc_\mu[\CE,\a]  =
\sum_{\lambda} \dim Q(M_\a,\CE_\a\otimes L_\lambda)\, e_\lambda.
\]
We will not use this geometrical interpretation in the present article.\qed
\end{remark}

In what follows, we will need the extension of the definition of
$\loc_\mu[\CE,\a]$ to the case when the generic stabilizer of the
$\T$ action on the connected manifold $M$ is not finite.

\begin{definition}
  Let the Lie group $G$ with Lie algebra $\g$ act on a
  manifold $M$.  Then, for a subset $C\subset M$, we denote by
\[    \g_C=\{X\in\g;\; VX\text{ vanishes on }C\}
\]
 and by $G_C$ the connected subgroup
of $G$ with Lie algebra $\g_C$.
\end{definition}

In our set up then, $\T_M$ is the {\em connected
  component} of the generic stabilizer of $M$ containing the identity
element, $\t_M\subset\t$ is the Lie algebra of $\T_M$, and for
every $p\in F$, the weights $\Phi_p$ span the annihilator
$\rzm\subset\t^*$.

Clearly, the group $\T_M$ acts on each of the fibers $\CE_q$, $q\in M$,
and since $M$ is connected, this representation does not depend on
$q$. In particular, for two fixed points $p,q\in F$, the
weights $\mu(p)$ and $\mu(q)$ of $\T$ differ by an element of
$\rzm$, and hence the affine-linear subspace
\begin{equation}
   \label{muaff}
   \am = \mu(p)+\rzm
 \end{equation}
 of $\t^*$ does not depend on $p\in F$. Note that, according to
 equation \eqref{musat}, the image $\mu(M)$ is contained in
 $\am$.

 Now we can repeat the definitions given in \eqref{eqalcove} and \eqref{defD}
 with $\t^*$ replaced by $\rzm$. More precisely, we consider the open
 set in $\am$ consisting of those elements $\gamma$ for which
 $\gamma-\mu(p)$ is $\Phi_p$-regular for any $p\in F$. An alcove
 $\a\subset \am$ is a connected component of this open set.
  As before, for an alcove $\a$,
 we denote by $\vv(\a-\mu(p))$ the $\Phi_p$-tope in $\rzm$ containing
 $\a-\mu(p)$. The formal character $\loc_\mu[\CE,\a]$ may be defined by
 equation \eqref{defD} (here we choose any polarizing vector in $\t$):
 \begin{equation}
\loc_\mu[\CE,\a] = \sum_{p\in F}
  \ccp \cdot\delta[\Phi_p\felarrow \Xpol,\vv(\ap)].
      \end{equation}

      Note that the function $\F\delta[\Phi_p\felarrow
      \Xpol,\vv(\ap)]$ is supported on $\rzm\cap \Lambda$, while the
      weights in $\Psi_p$ do not necessarily belong to $\rzm\cap
      \Lambda$. Thus the multiplicity function $\F\loc_\mu[\CE,\a]$ is
      supported on a finite number of translates of $\rzm\cap
      \Lambda$, and it is quasi-polynomial on each translate.

 Denote by $\C_\lambda$ the trivial line bundle over $M$ endowed with
 the action $e_{\lambda}$ of $\T$.  For any equivariant bundle $\CE$ over
 $M$, we have a decomposition
\begin{equation}
\CE=\bigoplus_{\lambda\in \Lambda/\Lambda\cap \rzm} \C_{\lambda}\otimes
(\CE\otimes \C_{-\lambda})^{\T_M},
\end{equation}
where the sum is understood as taken over any system of
representatives of the quotient. This leads to the formula
\begin{equation}\label{EE}
\loc_\mu[\CE,\a]=\sum_{\lambda\in \Lambda/\Lambda\cap \rzm} e_{\lambda}
\loc_\mu[(\CE\otimes \C_{-\lambda})^{\T_M},\a],
\end{equation}
which expresses the formal $\T$-character $\loc_\mu[\CE,\a]$ through
quasi-polynomial characters of the torus $\T/\T_M$. Formula \eqref{EE}
has the following simple corollary:
\begin{lemma} \label{simple}
  If for some $\lambda \in \Lambda$, the multiplicity
  $\F\loc_\mu[\CE,\a](\lambda)$ is not zero, then the
  restriction of $\lambda$ to $\t_M$ is a weight of the representation
  of $\T_M$ on a fiber of $\CE$.
\end{lemma}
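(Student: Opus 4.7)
The plan is to read off the statement directly from the decomposition formula \eqref{EE} together with the support property of the basic building blocks $\delta[\Phi_p\felarrow \Xpol,\vv(\ap)]$.

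First I would recall why the Fourier transform of the auxiliary character $\loc_\mu[\CF,\a]$ of any $\T$-equivariant bundle $\CF$ whose weights on fibers lie in $\rzm$ is supported on $\rzm\cap\Lambda$. This is because, by the paragraph preceding \eqref{EE}, each $\Phi_p$ spans $\rzm$, so each $\delta[\Phi_p\felarrow \Xpol,\vv(\ap)]$ is a quasi-polynomial character supported on $\rzm\cap\Lambda$ (Lemma \ref{qpoly} together with Remark \ref{topedegen}); convolving/multiplying by $\ccp=\sum_{\eta\in\Psi_p}e_{\eta}$ then shifts the support by weights $\eta$ of $\T$ that lie in $\rzm$ when $\CF$ is $\T_M$-trivial on fibers. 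Applied to $\CF=(\CE\otimes \C_{-\lambda'})^{\T_M}$, the fibers are $\T_M$-trivial, and we conclude that $\F\loc_\mu[(\CE\otimes \C_{-\lambda'})^{\T_M},\a]$ is supported on $\rzm\cap\Lambda$.

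Next I would use \eqref{EE}: multiplication by $e_{\lambda'}$ shifts the support by $\lambda'$ (Lemma \ref{rprops}(3)), so the $\lambda'$-summand on the right-hand side of \eqref{EE} is supported on $\lambda'+\rzm\cap\Lambda$. Choosing representatives $\lambda'$ one in each coset of $\Lambda/\Lambda\cap\rzm$, these supports are pairwise disjoint, hence the nonvanishing of $\F\loc_\mu[\CE,\a](\lambda)$ forces the existence of a representative $\lambda'$ such that (a) $\lambda\in \lambda'+\rzm\cap\Lambda$, and (b) the summand $\loc_\mu[(\CE\otimes \C_{-\lambda'})^{\T_M},\a]$ is nonzero. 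From (b) the bundle $(\CE\otimes \C_{-\lambda'})^{\T_M}$ is itself nonzero, which by definition means that $\lambda'|_{\t_M}$ is a weight of $\T_M$ on some fiber of $\CE$. From (a) we have $\lambda-\lambda'\in\rzm$, so the restrictions to $\t_M$ agree, that is, $\lambda|_{\t_M}=\lambda'|_{\t_M}$, which is the desired conclusion.

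I do not anticipate a serious obstacle here: once \eqref{EE} is in hand, the lemma is essentially a bookkeeping statement about which cosets of $\rzm\cap\Lambda$ in $\Lambda$ can carry nonzero multiplicity. The only small point requiring attention is that the $\ccp$ for the restricted bundles $(\CE\otimes\C_{-\lambda'})^{\T_M}$ have weights lying in $\rzm$ (because the trivial $\T_M$-action on the fibers means each weight $\eta$ of $\T$ restricts to $0$ on $\t_M$, i.e. $\eta\in\rzm$), which is exactly what guarantees that the shifts introduced by $\ccp$ do not move us out of $\rzm\cap\Lambda$. Once that is noted, the argument above is complete.
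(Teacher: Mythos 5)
Your argument is correct and is exactly the proof the paper intends: the paper presents Lemma \ref{simple} as an immediate consequence of formula \eqref{EE} together with the support observations made in the paragraph just before it, and you have simply written those observations out in full. The key steps you identify — each summand $e_{\lambda'}\loc_\mu[(\CE\otimes\C_{-\lambda'})^{\T_M},\a]$ has Fourier support on the coset $\lambda'+\rzm\cap\Lambda$, the cosets are disjoint, and nonvanishing of a summand forces $(\CE\otimes\C_{-\lambda'})^{\T_M}\neq0$ hence $\lambda'|_{\t_M}$ is a fiber weight — are precisely the content of the paper's remarks.
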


We end this section with a quick study of the situation when  the
affine space $\am$ given by equation \eqref{muaff} is linear, i.e.
passes through the origin.  This is equivalent to the condition
that $\T_M$ acts trivially on the fibers of $\CL$, i.e. $\CL$ is a
$\T/\T_M$-line bundle.

\begin{lemma}\label{quasipolynomial}
Let $\CE$ be a $\T$-bundle, and  $\CL$ be a  $\T/\T_M$-line bundle on $M$.
Then $k\mapsto \F\loc_\mu[\CE\otimes \CL^k,\a](0)$
is a quasi-polynomial function of $k$.
\end{lemma}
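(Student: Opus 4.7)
The plan is to reduce, via the decomposition \eqref{EE}, to a situation where the generic stabilizer of the torus action is finite, and then invoke Lemma \ref{delqpol} directly. Since $\CL$ is a $\T/\T_M$-line bundle, $\T_M$ acts trivially on the fibers of $\CL^k$, and hence for every $\lambda \in \Lambda$
\[
(\CE \otimes \CL^k \otimes \C_{-\lambda})^{\T_M} \;=\; (\CE \otimes \C_{-\lambda})^{\T_M} \otimes \CL^k.
\]
Substituting this identity into \eqref{EE} applied to the bundle $\CE \otimes \CL^k$ yields
\[
\loc_\mu[\CE \otimes \CL^k, \a] \;=\; \sum_{\lambda \in \Lambda/\Lambda \cap \rzm} e_\lambda \, \loc_\mu\bigl[(\CE \otimes \C_{-\lambda})^{\T_M} \otimes \CL^k, \a\bigr].
\]

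Next I would isolate the contribution at the weight $0$. Each bundle $(\CE \otimes \C_{-\lambda})^{\T_M} \otimes \CL^k$ is genuinely $\T/\T_M$-equivariant, so the weights of its fibers over the $\T$-fixed points lie in $\rzm \cap \Lambda$; combined with the observation recalled immediately after the definition of $\loc_\mu[\CE, \a]$ in the degenerate-stabilizer setting that $\F\delta[\Phi_p \felarrow \Xpol, \vv(\ap)]$ is supported on $\rzm \cap \Lambda$, it follows that the Fourier transform of each summand $\loc_\mu[(\CE \otimes \C_{-\lambda})^{\T_M} \otimes \CL^k, \a]$ is supported on $\rzm \cap \Lambda$. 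Multiplication by $e_\lambda$ translates this support by $\lambda$, so the only class in $\Lambda/\Lambda \cap \rzm$ whose summand can be nonzero at $0$ is the class of $0$ itself. Taking $\lambda = 0$ as representative of that class, we obtain
\[
\F\loc_\mu[\CE \otimes \CL^k, \a](0) \;=\; \F\loc_\mu[\CE^{\T_M} \otimes \CL^k, \a](0).
\]

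Finally, by the very definition of $\T_M$, the quotient torus $\T/\T_M$ acts on $M$ with finite generic stabilizer, and both $\CE^{\T_M}$ and $\CL$ are genuine $\T/\T_M$-bundles; moreover the lists $\Phi_p$ now span the dual $\rzm$ of the Lie algebra of $\T/\T_M$. Lemma \ref{delqpol}, applied in this $\T/\T_M$-equivariant setting with the weight lattice $\Lambda \cap \rzm$ replacing $\Lambda$, shows that
\[
(\lambda, k) \;\longmapsto\; \F\loc_\mu[\CE^{\T_M} \otimes \CL^k, \a](\lambda)
\]
is a quasi-polynomial function on $(\Lambda \cap \rzm) \times \Z$, and specialising to $\lambda = 0$ gives the desired quasi-polynomial function of $k$. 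I do not anticipate any substantive obstacle: the only delicate point is the bookkeeping in the middle paragraph that singles out the unique class in $\Lambda / \Lambda \cap \rzm$ contributing to the multiplicity at the origin, after which the statement collapses to a direct application of Lemma \ref{delqpol}.
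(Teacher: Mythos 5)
Your proof is correct and follows the same route as the paper: decompose via equation~\eqref{EE}, use triviality of the $\T_M$-action on $\CL^k$ to reduce the value at $0$ to the $\T/\T_M$-equivariant bundle $\CE^{\T_M}\otimes\CL^k$, and then apply Lemma~\ref{delqpol}. The only difference is that you spell out the support bookkeeping that isolates the class of $0$ in $\Lambda/\Lambda\cap\rzm$, a step the paper states without elaboration.
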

\begin{proof}
Applying  (\ref{EE}) to the bundle $\CE\tensor\CL^k$,  and using the condition on $\CL$, we obtain the equality
\[ \loc_\mu[\CE\otimes \CL^k,\a](0) = \loc_\mu[\CE^{\T_M}\otimes
\CL^k,\a](0).
\]
Since $\CE^{\T_M}$ is a $\T/\T_M$-equivariant vector bundle,  we can
replace $\T$ by $\T/\T_M$. According to Lemma \ref{delqpol},
$\F\loc_\mu[\CE^{\T_M}\otimes \CL^k,\a](\lambda)$ is quasi-polynomial in
$(\lambda,k)\in (\rzm\cap \Lambda)\times\Z$, and hence
$\F\loc_\mu[\CE^{\T_M}\otimes \CL^k,\a](0)$ is a quasi-polynomial function of
$k$.
  \end{proof}

\section{Decomposition of partition functions}
\label{sec:decomp}

In this section, we prove a decomposition formula for the generating
function $\Theta[\Phi\felarrow \Xpol]$ of the partition function
introduced in \eqref{deftheta}. This formula is due to Paradan and it
will serve as the combinatorial engine of our proof of Theorem
\ref{GS}.

\begin{definition}
  Given a list $\Phi$ of weights in $\Lambda\subset\t^*$, introduce
  the set of {\bf $\Phi$-rational subspaces}
\[     \CRF = \{S\subset\t^*\text{ linear};\;  \Phi\cap S \text{ spans }S\}.
\]
This is the set of linear subspaces of $\t^*$ spanned by some subset
of $\Phi$:
\end{definition}

\begin{remark} \label{remrs}
\noindent 1. Note that $\{0\}\in\CRF$, and $\t^*\in\CRF$ if $\Phi$
spans $\t^*$.   \\
\noindent 2. Comparing this definition to Definition
\ref{defphireg}, we see that all subspaces $S\in\CRF$, except for
$S=\t^*$, consist of non-regular elements.
\end{remark}

Fix a positive definite scalar product $(\cdot,\cdot)$ on
$\t^*$. This will allow us to define orthogonal projections in $\t^*$,
as well as to identify $\t$ and $\t^*$ whenever necessary.

For each rational subspace $\r\in \CRF$ and
$\gamma\in\t^*$, introduce the notation $\gamma_\r$ for the orthogonal
projection of $\gamma$ onto $\r$, and $\ysg$ for the vector
$(\gamma_\r-\gamma)$ (see the diagram below).
Thus we have the orthogonal decomposition $$\gamma=\gamma_S-Y_{S,\gamma}.$$

In what follows, we will consider $Y_{S,\gamma}$ to be an element of
$\t$. 

\begin{tikzpicture}
\coordinate (a) at (1,0);
\coordinate (b) at (5,2);

\draw (a)  -- +(0:8cm);
\draw (a)  -- +(180:1.8cm);
\draw (a)  -- +(-135:0.8cm);
\draw (a)  -- +(45:6.3cm);

\draw (1.2,-.27) node {$0$};
\draw (5.2,2.2) node {$\gamma$};
\draw (8.5,-.27) node {$S$};
\draw (5,-.2) node {$\gamma_S$};
\draw (4.6,1) node {$\ysg$};

\begin{scope}[->,>=stealth,semithick]
\draw[->] (b)  -- +(-90:2cm);
\end{scope}

\fill[color=black] (b) circle (.5mm);

\end{tikzpicture}

Recall from Lemma \ref{qpoly} and Remark \ref{topedegen} that, on a
$\Phi$-tope $\vv$ in the linear subspace $\lspan(\Phi)$ generated by
$\Phi$, the partition function $\F\Theta[\Phi\felarrow \Xpol]$
coincides with a quasi-polynomial $\F\qpol[\Phi\felarrow \Xpol,\vv]$
on the lattice $\lspan(\Phi)\cap \Lambda$. It is thus natural to
compare the two functions at all points of $\Lambda\cap \lspan(\Phi)$.
As we will see, the difference may be expressed as a sum of
(convolution) products of partition functions and quasi-polynomials
coming from lower-dimensional systems.

Now we can formulate Paradan's decomposition formula
(\cite{parjump}, Section 5.4, proof of Theorem 5.1) as follows.
\begin{proposition}\label{paradandecomposition}  Let $\Phi$
  be a list of vectors in $\Lambda$, and let $\Xpol\in \t$ be a
  polarizing vector for $\Phi$.
  Assume that $\gamma\in\t^*$ is such that for every $\r\in \CRF$, the
projection $\gamma_\r\in \r$ is $(\Phi\cap \r)$-regular, while the
orthogonal component
  $\yv$ is polarizing for $\Phi\setm\r$. Then
  \begin{equation}
    \label{paraeq}
    \Theta[\Phi\felarrow \Xpol]=\sum_{\r\in \CR(\Phi)}  \Theta[\Phi\setm
    \r\felarrow\ysg]\,\cdot\,
\qpol[\Phi\cap \r\felarrow \Xpol,\vv(\gamma_\r)].
\end{equation}
\end{proposition}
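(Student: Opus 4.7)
The strategy I propose is to prove the identity by multiplying both sides by $\prod_{\phi \in \Phi}(1 - e_\phi)$, showing both reduce to $1$, and then invoking uniqueness.

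For the LHS, this is immediate from Lemma \ref{thetaconv}(2): $\Theta[\Phi \felarrow Y] \cdot \prod_{\phi \in \Phi}(1 - e_\phi) = 1$. For the RHS term indexed by $S \in \CRF$, factor $\prod_{\phi \in \Phi}(1 - e_\phi) = \prod_{\phi \in \Phi \setm S}(1-e_\phi) \cdot \prod_{\phi \in \Phi \cap S}(1-e_\phi)$ and apply Lemma \ref{thetaconv}(2) to the first factor multiplied by $\Theta[\Phi \setm S \felarrow \ysg]$, using that $\ysg$ polarizes $\Phi \setm S$. The $S$-contribution after multiplication thus reduces to $\qpol[\Phi \cap S \felarrow Y, \vv(\gamma_S)] \cdot \prod_{\phi \in \Phi \cap S}(1-e_\phi)$.

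The key sublemma is that this latter expression vanishes in $\rhat$ whenever $\Phi \cap S$ is non-empty (i.e., $S \neq \{0\}$). Indeed, the product is a finite linear combination of shifts of the quasi-polynomial character $\qpol[\Phi \cap S \felarrow Y, \vv(\gamma_S)]$, hence is itself quasi-polynomial. On a subcone of $\vv(\gamma_S)$ deep enough that all shifts $\lambda - \sum_{\phi \in J}\phi$ (for $J \subset \Phi \cap S$) still lie in $\vv(\gamma_S)$, the quasi-polynomial values agree with $\F\Theta[\Phi \cap S \felarrow Y]$, so the product equals $\F(\Theta[\Phi \cap S \felarrow Y] \cdot \prod(1-e_\phi))(\lambda) = \F(e_0)(\lambda) = 0$ for $\lambda \neq 0$. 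Thus the quasi-polynomial $\F(\qpol \cdot \prod(1-e_\phi))$ vanishes on an open sub-cone of $\Lambda$, and Lemma \ref{rprops}(4) forces it to vanish identically. Only the $S = \{0\}$ term survives on the RHS; it evaluates to $\Theta[\Phi \felarrow -\gamma] \cdot \prod(1-e_\phi) = 1$ by Lemma \ref{thetaconv}(2) applied to the polarizing vector $-\gamma$.

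At this point $(\text{LHS} - \text{RHS}) \cdot \prod_{\phi \in \Phi}(1-e_\phi) = 0$. To finish, I would use a support argument. Inspecting the supports of each RHS term via the polarizing properties of $\ysg$, one sees that for $\lambda$ deep enough in the top-dimensional tope $\vv(\gamma)$, only the $S = \lspan(\Phi)$ term contributes — it equals $\qpol[\Phi \felarrow Y, \vv(\gamma)]$, whose Fourier transform by definition agrees with $\F\text{LHS}$ on $\vv(\gamma) \cap \Lambda$. Combined with the finite-difference equation above, this should force LHS $=$ RHS globally. The \emph{main obstacle} is this final uniqueness step: the kernel of multiplication by $\prod(1-e_\phi)$ in $\rhat$ contains non-trivial quasi-polynomial characters, so making the argument watertight requires delicately combining the finite-difference equation with the vanishing-on-an-open-cone condition, in effect showing that any element of $\rhat$ killed by $\prod(1-e_\phi)$ and vanishing on an open subcone of a top-dimensional tope must vanish identically.
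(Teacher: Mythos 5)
Your argument is correct and quite slick up to the point where you obtain $(\mathrm{LHS}-\mathrm{RHS})\cdot\prod_{\phi\in\Phi}(1-e_\phi)=0$: the telescoping of the $S=\{0\}$ term against $\prod(1-e_\phi)$, and the vanishing of $\qpol[\Phi\cap\r\felarrow \Xpol,\vv(\gamma_\r)]\cdot\prod_{\phi\in\Phi\cap\r}(1-e_\phi)$ for $\r\neq\{0\}$ (established via Lemma \ref{rprops}(4) applied inside $\r$), are both sound. You also correctly identify that everything hinges on the final uniqueness step. Unfortunately, the sublemma you hope to prove there --- that an element of $\rhat$ killed by $\prod_{\phi\in\Phi}(1-e_\phi)$ and vanishing on an open subcone of a top-dimensional tope must vanish identically --- is \emph{false}. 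Take $\Lambda=\Z^2$, $\Phi=[\alpha,\beta]$ with $\alpha=(1,0)$, $\beta=(0,1)$, and let $\theta\in\rhat$ have Fourier transform $\F\theta(m,n)=1$ for $m<0$ and $0$ otherwise. Then $\theta\,(1-e_\alpha)=-\sum_{n\in\Z}e_{(0,n)}$ is $\beta$-periodic, so $\theta\,(1-e_\alpha)(1-e_\beta)=0$; moreover $\F\theta$ is piecewise constant on topes and vanishes identically on the positive quadrant, which is a top-dimensional $\Phi$-tope. Thus being killed by the \emph{product}, even together with tope-wise quasi-polynomiality and vanishing on a full tope, is simply not enough structure to force vanishing.

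The paper avoids this trap by proving a stronger statement. Rather than multiplying by the whole product at once, it multiplies by a \emph{single} factor $(1-e_\phi)$ and observes, via the case analysis on whether $\phi\in\r$ and whether $\r\in\CR(\Phi-\{\phi\})$, that both sides of \eqref{paraeq} transform into the corresponding sides for the smaller list $\Phi'=\Phi-\{\phi\}$ (for which the standing hypothesis on $\gamma$ is automatically inherited). Induction on $|\Phi|$ then yields $(\mathrm{LHS}-\mathrm{RHS})(1-e_\phi)=0$ for \emph{each} $\phi\in\Phi$, i.e.\ the Fourier transform of the difference is periodic under translation by every element of $\Phi$ individually, hence periodic under the finite-index sublattice $\Z\Phi\subset\Lambda$. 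Combined with the vanishing on the open tope $\vv(\gamma)$ --- which both your argument and the paper's establish --- this genuinely forces the difference to vanish on all of $\Lambda$, since an open cone meets every coset of a finite-index sublattice. This periodicity under each individual generator is exactly the extra information you lose by only controlling the full product, and my counterexample $\theta$ above is not annihilated by either $(1-e_\alpha)$ or $(1-e_\beta)$ alone, consistent with the paper's argument excluding it. So the fix is to replace your one-shot multiplication by the inductive one-factor-at-a-time argument; your computation of the $\r=\{0\}$ term and the vanishing sublemma then become superfluous.
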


Observe that the set of $\gamma\in \t^*$ satisfying the assumptions of
Proposition \ref{paradandecomposition} is a complement of the union of
a finite number of hyperplanes. Indeed $\gamma_S$ is $\Phi$-regular if
it is not contained in a union of hyperplanes in $S\subset \t$, while
$\yv$ is polarizing if it is not contained in a union of
hyperplanes in $\t$.

Also note that if $-\gamma$ is in the dual cone to the cone generated by
$\Phi^+\cup -\Phi^{-}$, then all the terms but the one corresponding
to $S=\{0\}$ vanish, and hence, in this case, the identity
\eqref{paraeq} is tautological.

\begin{example} Let $\t^*=\R \alpha,\, \Lambda=\Z \alpha$,
$\Phi:=[\alpha]$ and set $\Xpol\in \t$ to be the vector satisfying
$\langle \alpha,\Xpol\rangle =1$. Then $$\Theta[\Phi\felarrow
\Xpol]=\sum_{k=0}^{\infty}e_{k\alpha}.$$

The identity
\begin{equation}\label{evident}
\sum_{k=0}^{\infty}e_{k\alpha}= \sum_{k=-\infty}^{\infty}e_{k\alpha}-\sum_{k=-\infty}^{-1}e_{k\alpha}
\end{equation}
is a particular case of Formula \eqref{paraeq}.

Indeed, in this one-dimensional case, the set $\CRF$ has two
elements: $\r=\{0\}$ and $\r=\t^*$.

If we let $\gamma=t\alpha$ for some $t>0$, then on the right hand side
of \eqref{paraeq} we have
\begin{itemize}
\item $\qpol[\Phi\felarrow \Xpol,\vv(\gamma_\r)]=\sum_{k\in
\Z}e_{k\alpha}$  for $\r=\t^*$, and \item $\Theta[\Phi\felarrow
\yv]=-\sum_{k>0}e_{- k\alpha}$, for $\r=\{0\}$.
\end{itemize}
Then formula \eqref{paraeq}  reads:
$$\Theta[\Phi\felarrow \Xpol]=\qpol[\Phi\felarrow
\Xpol,\vv(\gamma_{\r})]+\Theta[\Phi\felarrow\yv],$$
and this is Formula \eqref{evident}.

\end{example}

\begin{proof}[Proof of Proposition \ref{paradandecomposition}]

  Replacing $\gamma$ by its orthogonal projection on the subspace
  generated by $\Phi$, we may assume that $V$ is spanned by $\Phi$.
  We pass to the Fourier transforms in order to prove that the two sides of
  \eqref{paraeq} coincide. Observe that for each term on the right
  hand side of \eqref{paraeq}, the Fourier transform restricted to a
  tope of $\Phi$ is quasi-polynomial.

  We begin by showing that the Fourier coefficients of the two sides
  coincide on the tope $\vv(\gamma)$.  Indeed, the term corresponding
  to $\r=\t^*$ is $\qpol[\Phi\felarrow \Xpol,\vv(\gamma)]$, whose
  Fourier coefficients coincide with those of $\Theta[\Phi\felarrow
  \Xpol]$ on the tope $\vv(\gamma)$ by the definition of
  $\qpol[\Phi\felarrow \Xpol,\vv(\gamma)]$. On the other hand, for any
  $\r\in\CRF$ different from $\t^*$, by construction, the Fourier
  transform of the corresponding term
  $\Theta[\Phi\setm\r\felarrow\yv]\cdot \delta[\Phi\cap \r\felarrow
  \Xpol,\vv(\gamma_\r)]$ is a function on $\Lambda$ supported on the
  subset $\{\lambda;\;\langle\lambda,\yv\rangle \geq 0\}$ (cf.  Lemma
  \ref{thetaconv}).  Since
  $\langle\gamma,\yv\rangle=-|\gamma_S-\gamma|^2<0$, we see that this
  function vanishes on a conic neighborhood of the half line $\R^+
  \gamma$, and thus on $\vv(\gamma)$.

  To extend the equality of Fourier coefficients to the rest of
  $\Lambda$, we use induction on the number of elements in $\Phi$. If
  $\Phi$ is empty, then both sides are equal to 1.  Now pick an
  element $\phi\in\Phi$, and consider $\Phi'=\Phi-\{\phi\}$ (cf. the
  beginning of \S\ref{sec:fixed} for our conventions).  Clearly
  $(1-e_{\phi})\cdot\Theta[\Phi\felarrow \Xpol]=\Theta[\Phi'\felarrow
  \Xpol]$. If we restrict the Fourier transform of this equation to a
  tope $\vv$, we obtain
\[(1-e_{\phi})\,
\qpol[\Phi\felarrow \Xpol,\vv]=\qpol[\Phi'\felarrow \Xpol,\vv']
\] if $\Phi'$ generates $V$ and $\vv'$ is the tope of $\Phi'$ containing
$\vv$, while \[(1-e_{\phi})\, \qpol[\Phi\felarrow \Xpol,\vv]=0\] if
$\Phi'$ does not generate $V$.

We multiply both sides of \eqref{paraeq} by $(1-e_{\phi})$, and
compare the results. On the left hand side, we  end up with
$\Theta[\Phi'\felarrow \Xpol]$. For a term on the right hand side
corresponding to $S\in \CR(\Phi)$, we separate 3 cases:
\begin{description}
\item[1. $\phi\notin\r$] In this case, $
\r\in\CR(\Phi'),\;\Phi\cap \r=\Phi'\cap \r$ and
\[(1-e_{\phi})\,\cdot\,\Theta[\Phi\setm
\r\felarrow\yv]=\Theta[\Phi'\setm \r\felarrow\yv].
\]
Thus, after multiplication by $(1-e_{\phi})$, we end up with the term
\begin{equation}
  \label{fifi}
\Theta[\Phi'\setm
\r\felarrow {\yv}]\,\cdot\,\qpol[\Phi'\cap \r\felarrow {\Xpol},\vv(\gamma_\r)].
\end{equation}
\item[2. $\phi\in\r$, and $\r\in\CR(\Phi')$] In this case
$\Phi\setm\r=\Phi'\setm\r$ while $(\Phi\cap \r)-\{\phi\}=\Phi'\cap
\r$, which implies that
\[(1-e_{\phi})\,
\qpol[\Phi\cap \r\felarrow \Xpol,\vv(\gamma_\r)]=\qpol[\Phi'\cap
\r\felarrow \Xpol,\vv'(\gamma_\r)].
\]
 Thus we end up with the term \eqref{fifi} again.
\item [3. $\phi\in\r$, and $\r\notin\CR(\Phi')$] In this case,
\[
(1-e_{\phi})\;\qpol[\Phi\cap\r\felarrow \Xpol,\vv(\gamma_\r)]=0.\]
\end{description}

Thus multiplying the right hand side of \eqref{paraeq} by
$(1-e_{\phi})$ has the effect of replacing $\Phi$ by $\Phi'$.
 Using the
inductive assumption, we can conclude that after multiplying both
sides of \eqref{paraeq} by $(1-e_{\phi})$ for any $\phi\in\Phi$,
we obtain an identity. As $\Phi$ spans $\t^*$, this implies that
the Fourier coefficients of the difference of the two sides of
\eqref{paraeq} form a periodic function with respect to the
sublattice of finite index in $\Lambda$ generated by $\Phi$.
Since we also know that these coefficients vanish on
$\vv(\gamma)$, they must vanish on all of $\Lambda$. This
completes the proof.
\end{proof}

\section{ Decomposition of characters}

\subsection{Decomposition of a $T$-character}
From now on, we assume that the stabilizer of the action of $T$ on the
almost complex manifold $M$ is finite.

In this section, we obtain an expression (Proposition \ref{paradan})
for the character $\chie$ associated to an equivariant vector bundle
$\CE$ on $M$.

 We start with the formula \eqref{abchar} for $\chie$ from Corollary
 \ref{abcharcor}:
\[   \chie=\sum_{p\in F} \ccp\cdot \Theta[\Phi_p\felarrow \Xpol].
\]
Our plan is to substitute the decomposition formula \eqref{paraeq} for
the partition function $\Theta[\Phi_p\felarrow \Xpol]$ in each term
parametrized by $p\in F$ in this expression.  Note that while
performing this substitution, we can take a vector $\gamma^p$ in
\eqref{paraeq} depending on the fixed point $p$.  We take advantage of
this possibility: we choose a fixed vector $\gamma\in \t^*$ and we set
the vector
$$\gamma^p=\gamma-\mu(p)$$ to be the polarizing vector for the
corresponding term. Informally, this means that we expand the
denominator of the term in the fixed point formula \eqref{ab}
corresponding to $p\in F$ in the direction of $\gamma$ from $\mu(p)$.

It is clear that if we choose $\gamma$ outside a finite set of affine
hyperplanes, then $\gamma^p$ satisfies the assumptions of Proposition
\ref{paradandecomposition} for each $p\in F$. We will call such a
$\gamma$ {\em generic}. For generic $\gamma$, we obtain
\begin{equation}
  \label{subst}
\chie=\sum_{p\in F}\sum_{\r\in \CR(\Phi_p)} \ccp\cdot
\Theta[\Phi_p\setm
\r\felarrow Y_{S,\gamma^p}]\cdot\qpol[\Phi_p\cap \r\felarrow \Xpol,
\vv(\gamma^p_{\r})].
\end{equation}

Our next step is to present a geometric interpretation of this
expression. We begin by introducing certain closed subsets of $M$ with
special stabilizers.  Recall that each $X\in\t$ defines a vector field
$VX$ on $M$, which vanishes on the fixed point set $F$.

\begin{definition} \label{defsub}
  For $p\in F$ and $\r\in\CR(\Phi_p)$,  denote by $C(p,\r)$ the
  connected component of the set
  \[M^{S^\perp}=\{m\in M|\, VX(m)=0\text{ for every }X\in \r^\perp\}\]
  which contains $p$.
Let $\Sub(M)$ stand for the set of all the connected subsets
  $C(p,\r)$  of $M$ obtained this way:
\[    \Sub(M) = \{C(p,\r)|\, p\in F,\,\r\in\CR(\Phi_p)\}.\qed
\]
\end{definition}

We make two important observations:
\begin{itemize}
\item Since $M^{S^\perp}$ is also the fixed  point set of the
subtorus of $\T$ with Lie algebra $S^\perp$, the set
$C(p,\r)$ is smooth, and hence it is a submanifold of $M$.
\item For  a submanifold
$C=C(p,S)\in\Sub(M)$, the Lie algebra of the stabilizing subtorus
$T_C\subset T$ is $S^{\perp}$.
\end{itemize}

It follows then that there is a one-to-one correspondence
\[     \{(p,S)|\, p\in F,\,\r\in\CR(\Phi_p) \}\leftrightarrow
\{(p,C)|\, C\in\Sub(M),\,p\in C\cap F\},
\]
and hence we can regroup the terms of the sum in \eqref{subst}
according to the fixed point component $ C\in\Sub(M)$ to which it
corresponds.

To write down this formula, we will need to introduce some new
notation which reflects this correspondence; in particular, we will give
new names to the vectors $\gamma^p_S$ and $Y_{S,\gamma^p}$.
Using our scalar product to identify $\t$ with its dual, we can write
$\t^*=\t_C\oplus \rz$.  Recall the definition of the affine subspace
$A_C = \mu(p) + \rz\subset\t^*$ and the fact that if $p$ and $q\in
C\cap F$, then $\mu(p)-\mu(q)$ belongs to $\rz$ (cf. \eqref{muaff} and
the discussion preceding it). This implies that the projection of
$\mu(p)-\gamma$ to $\t_{\ZZ}$ does not depend on the choice of the
fixed point $p\in C\cap F.$

Using this observation, we introduce the  following notations.
\begin{definition} \label{defyg} Given $C\in\Sub(M)$ and a generic
  $\gamma$, denote by $\gamma_C$ the orthogonal projection of $\gamma$
  on the affine space $A_C$, and introduce the notation
\[ \yc \overset{\mathrm{def}}=
\gamma_C-\gamma
\]
for the polarizing vector in $\t_C$, omitting its dependence on
$\gamma$ (see Figure below).
\end{definition}

Then, given $C=C(p,S)\in\Sub(M)$, and a generic $\gamma\in\t^*$, we
have $Y_{S,\gamma^p}=Y_C$, and $\gamma^p_{\r}=\gamma_C-\mu(p)$.

\begin{tikzpicture}
\coordinate (a) at (1,0);
\coordinate (b) at (4,1.5);
\coordinate (c) at (8,0);
\coordinate (d) at (4.8,2);

\begin{scope}[ultra thick]
\draw (a)  -- +(0:9cm);
\draw (a)  -- +(180:1.8cm);
\end{scope}

\draw (a)  -- +(-135:1.8cm);
\draw (a)  -- +(45:5.7cm);
\draw (a)  -- +(135:1.8cm);
\draw (a)  -- +(-45:2cm);

\draw (c)  -- +(-135:1.8cm);
\draw (c)  -- +(45:3cm);
\draw (c)  -- +(135:3.2cm);
\draw (c)  -- +(-45:2cm);

\draw (8.9,-.3) node {$\mu(q)$};
\draw (1,-.5) node {$\mu(p)$};
\draw (4.2,1.7) node {$\gamma$};
\draw (10,-.3) node {$A_C$};
\draw (4.2,-.3) node {$\gamma_C$};
\draw (4.3,.9) node {$\yc$};
\draw (5,2.2) node {$0$};

\begin{scope}[->,>=stealth,semithick]
\draw[->] (b)  -- +(-90:1.5cm);
\end{scope}

\fill[color=black] (a) circle (.7mm);

\fill[color=black] (b) circle (.7mm);

\fill[color=black] (c) circle (.7mm);

\fill[color=black] (d) circle (.7mm);
\end{tikzpicture}

The manifold $C$ inherits a $\T$-invariant almost complex structure
from $M$, and the set of weights of the fiber of the complex vector
bundle $\tjs C$ at $p\in C\cap F$ is $\Phi_p\cap\rz$. We can thus
regroup the terms of \eqref{subst} and
obtain the following result.
\begin{proposition} \label{resumprop}
Let $\CE$ be a complex vector bundle over an almost complex
$T$-manifold $M$, and $\gamma\in\t^*$ a generic point. Then, with the
notation introduced above, we have
\begin{equation}
  \label{substz}
  \chie=\sum_{C\in\Sub(M)} \termec,
\end{equation}
where
\begin{equation}
  \label{defdmc}
\termec:=\sum_{p\in C\cap F}
\ccp\cdot \Theta[\Phi_p\setm \rz\felarrow \yc]\cdot\delta[\Phi_p\cap
\rz\felarrow \Xpol, \vv(\gamma_{C}-\mu(p))].
\end{equation}
\end{proposition}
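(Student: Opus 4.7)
The plan is to start from Corollary \ref{abcharcor}, which gives
$\chi_\CE = \sum_{p \in F} \ccp \cdot \Theta[\Phi_p \felarrow \Xpol]$,
and then apply Paradan's decomposition formula (Proposition \ref{paradandecomposition}) to each partition function $\Theta[\Phi_p \felarrow \Xpol]$ using a $p$-dependent choice of the auxiliary vector, namely $\gamma^p := \gamma - \mu(p)$, where $\gamma \in \t^*$ is a single generic vector chosen once and for all. Since the forbidden locus for each $p$ is a finite union of affine hyperplanes, and $F$ is finite, the generic $\gamma$ avoids all of them simultaneously, so the hypotheses of Proposition \ref{paradandecomposition} hold for every $p \in F$. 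This immediately yields equation \eqref{subst}.

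Next I would rewrite the double sum $\sum_{p \in F} \sum_{S \in \CR(\Phi_p)}$ as a sum over pairs $(p, C)$ with $C \in \Sub(M)$, $p \in C \cap F$, via the bijection described in Definition \ref{defsub}: to $(p, S)$ assign $C = C(p, S)$, the connected component of $M^{S^\perp}$ through $p$; conversely to $(p, C)$ assign $S = \t_C^\perp$. The key verification here is that under this correspondence one has $S = \rz$ and $\Phi_p \cap S = \Phi_p \cap \rz$; this follows because the weights of the tangent representation at $p$ on the submanifold $C$ are exactly those $T$-weights that annihilate $\t_C$, and since the generic stabilizer of $T/T_C$ acting on $C$ is finite, $\Phi_p \cap \rz$ spans $\rz$, so indeed $S = \rz \in \CR(\Phi_p)$.

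Finally I would translate the Paradan ingredients into the $C$-language. For $S = \rz$, the orthogonal decomposition $\t^* = \t_C \oplus \rz$ combined with the observation that $\gamma_C \in A_C = \mu(p) + \rz$ gives
\[ \gamma - \mu(p) = (\gamma - \gamma_C) + (\gamma_C - \mu(p)) = -\yc + (\gamma_C - \mu(p)), \]
with the first summand in $\t_C$ and the second in $\rz$. Thus the projection of $\gamma^p$ onto $S$ is $\gamma^p_S = \gamma_C - \mu(p)$, and the orthogonal component $Y_{S, \gamma^p} = \gamma^p_S - \gamma^p = \yc$. Substituting these identifications into \eqref{subst} and regrouping by $C \in \Sub(M)$ gives exactly the formula \eqref{substz} with $\termec$ as in \eqref{defdmc}.

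The main obstacle is purely notational: keeping straight which subspaces live in $\t$ versus $\t^*$ and verifying that $\gamma^p_S$ is indeed $\Phi_p \cap S$-regular (so that $\delta[\Phi_p \cap \rz \felarrow \Xpol, \vv(\gamma_C - \mu(p))]$ is well-defined) while $\yc$ is polarizing for $\Phi_p \setm \rz$. Both of these follow automatically from the genericity of $\gamma$, since the conditions cut out only finitely many affine hyperplanes in $\t^*$. No further analytic input is required; the identity is a purely combinatorial regrouping of Paradan's decomposition applied fibrewise to the Atiyah-Bott fixed point formula.
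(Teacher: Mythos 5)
Your proposal is correct and follows the paper's own derivation essentially verbatim: apply Corollary \ref{abcharcor}, substitute Proposition \ref{paradandecomposition} termwise with the $p$-dependent choice $\gamma^p = \gamma - \mu(p)$ to get \eqref{subst}, then regroup the double sum over $(p,S)$ via the bijection with pairs $(p,C)$, $C\in\Sub(M)$, $p\in C\cap F$, using the identifications $S=\rz$, $\gamma^p_S = \gamma_C - \mu(p)$, and $Y_{S,\gamma^p} = \yc$. No substantive differences from the paper's argument.
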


Our next step is to represent the contribution $\termec$ of the fixed
point set $C\in\Sub(M)$ to the sum \eqref{substz} in the
form $\loc_\mu[\tilde\CE_C,\a(\gamma_C)]$, where $\tilde\CE_C$ is a
certain infinite-dimensional bundle over $C$, and $\a(\gamma_C)$, as
usual, stands for the alcove containing $\gamma_C$.

The bundle $\tilde\CE_C$ is constructed as follows. Consider the
bundle $\NZZ=\tjs M/\tjs C$; this is a $\T$-equivariant complex
bundle\footnote{In the Kahler case, $\NZZ$ is the conormal vector
  bundle to $C$.}  on $C$, whose $\T_C$-weights are constant along
$C$.  The list $\Phi_C$ of these weights may be obtained by
restricting $\Phi_p\setm \rz$ to $\t_C$ for any $p\in C\cap F$. We
split $\Phi_C$ into two groups according to the sign of their value on
the polarizing vector $\yc\in \t_C$:
\begin{equation}
  \label{phipm}
  \Phi_C=\Phi_C^+\cup \Phi_C^-,\;\Phi_C^-=\{\phi\in\Phi_C|\,\langle\phi,\yc\rangle<0\}.
\end{equation}
This splitting induces  a direct sum decomposition of $\NZZ$:
\[ \NZZ=\NZZ_+\oplus\NZZ_-,
\] where $\NZZ_+$ and $\NZZ_-$ are the subspaces generated by
eigenvectors of $\T_C$ with weights from $\Phi_C^+$ and $\Phi_C^-$,
respectively.  Finally, define the infinite-dimensional
$\T$-equivariant virtual bundle 
 \begin{equation}
   \label{defsdot}
  \CS(\NZZ\felarrow \yc)=(-1)^{\rank \NZZ_-}
\det(\NZZ_-^*)\otimes  \bigoplus_{m=0}^{\infty}
\CS^{[m]}(\NZZ^*_-\oplus \NZZ_+)
   \end{equation}
over $C$, where $\CS^{[m]}(V)$ stands for the $m$th symmetric
tensor product of the vector space $V$, and $\det(V)$ for its top
exterior product.

  Then the
combination of the fixed point formula with
 Proposition \ref{paradandecomposition}  leads to the following statement.

\begin{proposition}\label{paradan} Let $\gamma$ be a generic
  point in $\t^*$, and denote by $\CE_C$ the restriction of $\CE$ to
  $C$.  Then for $C\in \Sub(M)$, The sum
  \begin{multline}
    \label{delec}
\loc_\mu[\CE_C\otimes
 \CS(\NZZ\felarrow  \yc),\a(\gamma_C)] \overset{\mathrm{def}}=\\
 (-1)^{\rank \NZZ_-}\sum_{m=0}^{\infty} \loc_\mu[\CE_C\otimes
\det(\NZZ_-^*)\otimes
\CS^{[m]}(\NZZ^*_-\oplus \NZZ_+), \a(\gamma_C)]    
  \end{multline}
is a well-defined formal character, and, in fact,
  \begin{equation}
    \label{Deltamuc}
\termec= \loc_\mu[\CE_C\otimes
 \CS(\NZZ\felarrow  \yc),\a(\gamma_C)], 
  \end{equation}
where the left hand side is defined in \eqref{defdmc}.

  Hence, in view of \eqref{substz}, we have the following equality in
  $\hat R(\T)$:
  \begin{equation}
    \label{thesum}
 \chi_\CE=\sum_{C\in \Sub(M)}  \loc_\mu[\CE_C\otimes
 \CS(\NZZ\felarrow  \yc),\a(\gamma_C)].
  \end{equation}
\end{proposition}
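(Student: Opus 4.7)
My plan is to prove the proposition by reducing everything to direct computation with the definitions, treating it essentially as a bookkeeping identity once all objects are unpacked. The three tasks are: (i) verify that the infinite sum \eqref{delec} converges in $\hat R(\T)$, (ii) identify the local character at $p \in C \cap F$ of the virtual symmetric algebra with the formal partition series $\Theta[\Phi_p \setm \rz \felarrow \yc]$, and (iii) assemble these into the expression \eqref{defdmc} for $\termec$. Step (iii) is then a tautology and Proposition~\ref{resumprop} gives \eqref{thesum}.

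For (i), fix $p \in C \cap F$. The $T$-weights of $\det(\NZZ_-^*) \otimes \CS^{[m]}(\NZZ_-^* \oplus \NZZ_+)$ at $p$ all satisfy $\langle \lambda, \yc \rangle \geq m \cdot c$ for some $c>0$, because every weight of $\NZZ_-^* \oplus \NZZ_+$ pairs strictly positively with $\yc$. Multiplying by $\delta[\Phi_p \cap \rz \felarrow \Xpol, \vv(\gamma_C - \mu(p))]$ does not change the $\yc$-pairing since $\rz \subset \t_C^\perp$ and $\yc \in \t_C$. Hence for each fixed $\lambda \in \Lambda$ only finitely many values of $m$ contribute a nonzero Fourier coefficient at $\lambda$, so the series in \eqref{delec} defines a bona fide element of $\hat R(\T)$.

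For (ii), compute the $\T$-trace of the fiber of $\CS(\NZZ \felarrow \yc)$ at $p$ directly from \eqref{defsdot}. The weights of $\NZZ_+$ at $p$ are the elements of $\Phi_p^+ := \{\phi \in \Phi_p \setm \rz : \langle \phi, \yc \rangle > 0\}$, the weights of $\NZZ_-^*$ are $\{-\phi : \phi \in \Phi_p^-\}$, and the weight of $\det(\NZZ_-^*)$ is $-\sum_{\phi\in\Phi_p^-}\phi$. Expanding the symmetric-algebra factors as geometric series gives
\[
\tau_p[\CS(\NZZ \felarrow \yc)] = (-1)^{|\Phi_p^-|} \prod_{\phi \in \Phi_p^-} e_{-\phi} \cdot \prod_{\phi \in \Phi_p^-}\!\sum_{k=0}^\infty e_{-k\phi} \cdot \prod_{\phi \in \Phi_p^+}\!\sum_{k=0}^\infty e_{k\phi},
\]
which matches the definition \eqref{deftheta} of $\Theta[\Phi_p \setm \rz \felarrow \yc]$ verbatim.

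For (iii), apply the (extended) definition of $\loc_\mu$ term by term in $m$, using multiplicativity of $\tau_p$ under tensor products:
\[
\loc_\mu[\CE_C \otimes \CS(\NZZ\felarrow \yc), \a(\gamma_C)] = \sum_{p \in C \cap F} \tau_p[\CE] \cdot \tau_p[\CS(\NZZ\felarrow \yc)] \cdot \delta[\Phi_p \cap \rz \felarrow \Xpol, \vv(\gamma_C - \mu(p))],
\]
where we have used $\vv(\a(\gamma_C) - \mu(p)) = \vv(\gamma_C - \mu(p))$ and the fact that the alcove containing $\gamma_C$ in $A_C$ is what governs the tope choice for the $\rz$-partition on $C$. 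Substituting the identity from step (ii), the right-hand side is precisely $\termec$ as given in \eqref{defdmc}. Combining \eqref{Deltamuc} with \eqref{substz} of Proposition~\ref{resumprop} yields \eqref{thesum}.

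The only genuine subtlety is the convergence argument in step (i); the verification that $\yc$ separates the half-space on which every term lies is what makes the whole formal manipulation legitimate, and this is where the choice of $\yc$ as the polarizing vector in $\t_C$ associated to the generic $\gamma$ (Definition~\ref{defyg}) plays its essential role. Everything else is a direct unwinding of definitions.
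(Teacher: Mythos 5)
Your proposal is correct and follows essentially the same route as the paper: you identify the $\T$-character of the fiber $\CS(\NZZ\felarrow\yc)_p$ with $\Theta[\Phi_p\setm\rz\felarrow\yc]$ via the explicit weight computation in \eqref{defsdot}, then conclude \eqref{Deltamuc} by comparing \eqref{defD} with \eqref{defdmc}. Your step (i) simply spells out in detail the paper's remark that the fibers form a $\T_C$-representation with finite multiplicities, and the observation that $\rz=\t_C^\perp$ is orthogonal to $\yc\in\t_C$ is exactly the point that makes the $m$-grading effective in bounding contributions to any fixed Fourier coefficient.
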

\begin{proof}
  Indeed, for $C\in \Sub(M)$, the fibers of the bundle \eqref{defsdot}
  over points of $C$ form a $\T_C$-representation with finite
  multiplicities. Recalling the definition of the formal character
  $\Theta$ from \eqref{deftheta}, we see that for $p\in C\cap F$, the
  $\T$-character of the fiber $\CS(\NZZ\felarrow \yc)_p$ is
  $\Theta[\Phi_p\setm \rz\felarrow \yc]$. Then, \eqref{Deltamuc}
  follows from comparing \eqref{defD} and \eqref{defdmc}.
\end{proof}

Proposition \ref{paradan} is a particular case of \cite[Proposition
6.14 and Formula 1.6]{par2}.  Paradan obtained this statement via
localization of the index of a transversally elliptic operator, and
then derived Proposition \ref{paradandecomposition} as a corollary of
\cite{par2}.  In our work, these statements appear in a natural order:
we proved Proposition \ref{paradandecomposition} directly for
partition functions by elementary combinatorial manipulations, and
then we deduced Proposition \ref{paradan} from the Atiyah-Bott fixed point
formula and Proposition \ref{paradandecomposition}.

\begin{remark}
Let us take a closer look at the decomposition \eqref{thesum} of the
character $\chie$.  The term corresponding to the case when $C$
consists of a single fixed point $p\in F$ is
$\ccp\cdot\Theta[\Phi_p\felarrow (\mu(p)-\gamma)]$. It is reassuring
to compare this to \eqref{abchar}, which contains a similar term:
$\ccp \cdot \Theta[\Phi_p \felarrow \Xpol]$, but where $\Phi_p$ is
reoriented with a vector $\Xpol$ independent of the point
$p$. According to Lemma \ref{thetaconv}, these two expressions,
interpreted as generalized functions on $T$, coincide with the smooth
function $\prod_{\phi\in \Phi_p}\left(1-t^{\phi}\right)^{-1}$ on the
open set $\{t\in \T;\; t^\phi\neq1\,\forall \phi\in\Phi_p\}$.
Now we observe that all the other terms of \eqref{thesum} correspond
to generalized functions supported on positive-codimensional subtori
of $T$. In particular, the term $\mcont_M[\mu,\CE,\gamma]$, which equals
$\Delta_{\mu}[\CE,\a]$ (cf. \eqref{defD}), is supported on a finite
number of points of $T$.
One can thus think of formula \eqref{thesum} as a refinement
of the Atiyah-Bott formula \eqref{ab}.  
\end{remark}

Next, we consider the supports of the Fourier transforms of the terms
of \eqref{thesum} in the Fourier dual space $\Lambda$. For simplicity, we
formulate our conclusions for the case $\CE=\CL$. (To follow the
notation, it will be helpful to consult the Figure after Definition \ref{defyg} above.)
\begin{proposition} Consider the terms of the decomposition
  \eqref{thesum} for the case $\CE=\CL$. Then the following statements
  hold.
  \label{fouriersupp}
  \begin{enumerate}
  \item Suppose that $M\neq C\in\Sub(M)$. Then the support of the
    Fourier transform $\F\mcont_C[\mu,\CL,\gamma]$, lies in
    the half-space
    \begin{equation}
      \label{halfsp}
 \{\lambda;\;
\langle\lambda,Y_C\rangle\geq \langle\gamma_C,Y_C\rangle\}.
    \end{equation}
\item When $C=M$, then the corresponding term of the sum
  \eqref{thesum} reduces to $\Delta_\mu[\CL,\a(\gamma)]$, which is a
  quasi-polynomial character.
\item On the alcove $\a(\gamma)$, the multiplicity function $\F\chie$
  coincides with the quasi-polynomial $\F\Delta_\mu[\CL,\a(\gamma)]$.
  \end{enumerate}
\end{proposition}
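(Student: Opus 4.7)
My plan is to handle the three parts in order, working from the definition \eqref{defdmc} of $\termec$ and from Lemma \ref{thetaconv}(1). For $\CE=\CL$ we have $\ccp=e_{\mu(p)}$, so each summand of \eqref{defdmc} factors as the shift $e_{\mu(p)}$ times $\Theta[\Phi_p\setm\rz\felarrow Y_C]$ times $\delta[\Phi_p\cap\rz\felarrow \Xpol,\,\vv(\gamma_C-\mu(p))]$. The key geometric input throughout is that $Y_C\in\t_C$ and $\rz=\t_C^\perp$, so any vector in $\rz$ is perpendicular to $Y_C$.

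For Part 1, Lemma \ref{thetaconv}(1) places the Fourier support of $\Theta[\Phi_p\setm\rz\felarrow Y_C]$ inside the closed half-space $\{\langle\cdot,Y_C\rangle\ge 0\}$. The Fourier support of the $\delta$-factor lies in $\rz$, hence in $Y_C^\perp$. Since the Fourier transform of a product is a convolution, the support of $\F(\Theta\cdot\delta)$ still lies in $\{\langle\cdot,Y_C\rangle\ge 0\}$. Multiplying by $e_{\mu(p)}$ shifts this by $\mu(p)$, giving the half-space $\{\langle\cdot-\mu(p),Y_C\rangle\ge 0\}$. Since $\mu(p)$ and $\gamma_C$ both lie in $A_C=\mu(p)+\rz$, their difference belongs to $\rz\perp Y_C$, so $\langle\mu(p),Y_C\rangle=\langle\gamma_C,Y_C\rangle$, and the shifted half-space is exactly \eqref{halfsp}. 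Summing over $p\in C\cap F$ preserves the support.

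For Part 2, I specialize the definitions to $C=M$. The standing hypothesis that the $T$-action has finite generic stabilizer forces $\t_M=0$, so $\rzm=\t^*$, $A_M=\t^*$, $\gamma_M=\gamma$ and $Y_M=0$. Then $\Phi_p\setm\rzm=\emptyset$, and the convention $\Theta[\emptyset\felarrow 0]=1$ collapses \eqref{defdmc} to
\[
\mcont_M[\mu,\CL,\gamma] \;=\; \sum_{p\in F} e_{\mu(p)}\cdot\delta\bigl[\Phi_p\felarrow \Xpol,\;\vv(\gamma-\mu(p))\bigr].
\]
Because $\gamma\in\a(\gamma)$, the tope $\vv(\gamma-\mu(p))$ coincides with $\vv(\a(\gamma)-\mu(p))$, so the right-hand side is precisely $\Delta_\mu[\CL,\a(\gamma)]$ by Definition \ref{defDelta}. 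Quasi-polynomiality was recorded immediately after that definition.

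Part 3 is Lemma \ref{restrict} applied with $\a=\a(\gamma)$. Conceptually one would like to deduce it from the decomposition \eqref{thesum} together with Parts 1 and 2, by showing that $\a(\gamma)$ sits on the strict side $\{\langle\cdot,Y_C\rangle<\langle\gamma_C,Y_C\rangle\}$ for every $C\neq M$; this is transparent when $\mathrm{codim}_{\t^*}\rz=1$ (the bounding hyperplane of \eqref{halfsp} then coincides with $A_C$, which is an alcove wall) but more delicate when $\rz$ has higher codimension, since then the hyperplane $H_C$ need not belong to the alcove arrangement. This geometric verification is the only real obstacle I foresee; given Lemma \ref{restrict}, however, Part 3 is immediate.
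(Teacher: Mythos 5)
Your proof is correct. Parts~1 and~2 follow the same line as the paper (which simply says they are immediate from the definitions): you read off the support statement from Lemma~\ref{thetaconv}(1) and the orthogonality $\rz=\t_C^\perp\perp Y_C$, and for $C=M$ you correctly reduce \eqref{defdmc} to Definition~\ref{defDelta} using the convention $\Theta[\emptyset\felarrow\Xpol]=1$ and $\vv(\gamma-\mu(p))=\vv(\a(\gamma)-\mu(p))$. For Part~3, however, you take a different route than the paper. The paper deduces Part~3 from Parts~1 and~2 by asserting that each half-space \eqref{halfsp} misses $\a(\gamma)$, which is only transparent when $\mathrm{codim}\,\rz=1$ (then $\partial H_C\supset A_C$ is a wall of the alcove arrangement); when $\rz$ has higher codimension the bounding hyperplane $\partial H_C$ depends on $\gamma$ and need not belong to the arrangement, so disjointness from $\a(\gamma)$ is not automatic. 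Your substitute --- invoking Lemma~\ref{restrict} directly with $\a=\a(\gamma)$ --- sidesteps this issue entirely and is more robust: it relies only on \eqref{abcharlkfourier} and the defining property of $\delta[\Phi_p\felarrow\Xpol,\vv]$, not on the geometry of the half-spaces $H_C$. The trade-off is that the paper's route, when it applies, exhibits Part~3 as a \emph{consequence} of the decomposition \eqref{thesum}, which is conceptually what motivates the proposition; your route proves the same equality independently of \eqref{thesum}. Both are valid, and your explicit flagging of the higher-codimension subtlety is a genuine improvement in rigor over the paper's one-line justification.
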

The first two statements immediately follow from the definition
\eqref{Deltamuc} of $\termec$. The third statement is a consequence of
the first two, since the half-spaces \eqref{halfsp} are in the
complement of $\a(\gamma)$. (Cf. Figure after Definition \ref{defyg}: the half-space
\eqref{halfsp} is the half-space under the thick line, i.e. the
one not containing $\gamma$.)  \qed

Let us verify these statements on our examples. In Example
\ref{ponex1}, the decomposition \eqref{expone}:
\begin{equation*}
\chi_{\CL^k}(t)=t^k\sum_{j=-\infty}^{\infty}t^{2j}- \sum_{j=1}^{\infty}t^{-k-2j}- \sum_{j=1}^{\infty}t^{k+2j}.
\end{equation*}
is an instance of \eqref{thesum}. The first term corresponds to
$C=\P^1$, while the other two terms come from the two fixed points.

We also give a two-dimensional example.
\begin{example}
\label{flag3}
In Example \ref{flag} (see also Example \ref{flagalcoves}), the set of
fixed point components $\Sub(M)$ consists of the following elements:
\begin{itemize}
\item  the complex 3-dimensional manifold $M$ itself,
\item the $6$ fixed points $p_w$, $w\in\Sigma_3$, corresponding to the
  vertices of the highlighted hexagon. The corresponding values of the
  moment map, are as follows:
$$\mu_{123}=4\alpha+3\beta,\,\mu_{213}=-\alpha+3\beta,\,\mu_{132}=4\alpha+\beta,$$
$$\mu_{321}=-3\alpha-4\beta,\,\mu_{231}=-3\alpha+\beta,\,\mu_{312}=-\alpha-4\beta$$

\item $9$ components isomorphic to $\P^1(\C)$, whose images are
  intervals which span the 9 lines on the picture below.  Each of
  these components contains precisely two fixed points; we will use
  the notation $C[p_v,p_w]$ for the component containing the fixed
  points $p_{v}$ and $p_w$, and $\ell\langle\mu_v,\mu_w\rangle$ for the
  corresponding line.

  For example, the fixed point component $C[p_{123},p_{213}]$ may be
  described as the set of flags of the form
\[\C v \subset \C
e_1\oplus \C e_2\subset \C e_1\oplus \C e_2\oplus \C e_3.
\] The stabilizer group of this submanifold is
$\{(t,t,u);\;t,u\in \mathrm{U}(1)\}.$
\end{itemize}

\begin{tikzpicture}
\coordinate (a) at (3,1.3);
\coordinate (b) at (4.5,1.3);
\coordinate (c) at (6.375,4.5465);
\coordinate (d) at (5.625,5.8455);
\coordinate (e) at (1.875,5.8455);
\coordinate (f) at (1.125,4.5465);

\fill[color=black] (a) circle (.7mm);
\fill[color=black] (b) circle (.7mm);
\fill[color=black] (c) circle (.7mm);
\fill[color=black] (d) circle (.7mm);
\fill[color=black] (e) circle (.7mm);
\fill[color=black] (f) circle (.7mm);

\draw (5.15,5.65) node {$\mu_{123}$};

\draw (8.5,2.9) node {$\ell\langle\mu_{123},\mu_{132}\rangle$};
\draw (3.75,6.1) node {$\ell\langle\mu_{123},\mu_{213}\rangle$};
\draw (1.4,0.5) node {$\ell\langle\mu_{123},\mu_{321}\rangle$};
\draw (-0.3,4.8) node {$\ell\langle\mu_{132},\mu_{231}\rangle$};

\draw (4.125,4.1) node {$\alpha$};
\draw (3.4,4.09) node {$\beta$};
\fill[color=black] (3.75,3.897) circle (.5mm);
\begin{scope}[->,>=stealth,semithick]
\draw[->]  (3.75,3.897)   -- +(0:.75cm);
\draw[->]  (3.75,3.897)   -- +(120:.75cm);
\end{scope}

\begin{scope}[ultra thin]
\draw (a)  -- +(-60:1.8cm);
\draw (a)  -- +(180:4.6cm);
\draw (a)  -- +(-120:1.8cm);
\draw (a)  -- +(60:7.2cm);

\draw (b)  -- +(-120:1.8cm);
\draw (b)  -- +(0:4.6cm);
\draw (b)  -- +(-60:1.8cm);
\draw (b)  -- +(120:7.2cm);

\draw (c)  -- +(60:2.8cm);
\draw (c)  -- +(-60:5.5cm);
\draw (c)  -- +(0:2.5cm);
\draw (c)  -- +(180:7.75cm);

\draw (d)  -- +(120:2cm);
\draw (d)  -- +(0:3.2cm);

\draw (e)  -- +(60:2cm);
\draw (e)  -- +(180:3.2cm);

\draw (f)  -- +(120:2.5cm);
\draw (f)  -- +(-120:5.6cm);
-- (2,1);
\end{scope}

\draw[ultra thick] (a) -- (b) -- (c) -- (d) -- (e) -- (f)
-- cycle;

\end{tikzpicture}

Let us consider $\chi_{\CL}$ as a character of the maximal torus $T$
of the adjoint group of $U(3)$ with lattice of weights
$\Lambda=\Z\alpha\oplus \Z\beta$, and set $\gamma=0$. The
decomposition \eqref{thesum} of the character $\chi_{\mathcal L}$
involves $16$ formal characters of $T$.  By symmetry with respect to
the Weyl group, we only need to describe the terms corresponding to
$M$, the term corresponding to the fixed point $\mu_{123}$, and the
terms corresponding to $C(\mu_{132},\mu_{231})$,
$C(\mu_{123},\mu_{213})$.

\begin{itemize}
\item $C=M$ contributes the polynomial character
  $\mcont_M[\mu,\CL,0]=3\sum_{\lambda\in \Lambda}e_\lambda$.
\item The term corresponding to $C=p_{123}$ is
  $$\mcont_{p_{123}}[\mu,\CL,0]=-e_{\mu_{123}}e_{(\alpha+\beta+(\alpha+\beta))}
  \cdot\sum_{k=0}^{\infty}e_{ k\alpha}\cdot\sum_{k=0}^{\infty}e_{
    k\beta} \cdot\sum_{k=0}^{\infty}e_{ k(\alpha+\beta)}$$
which is supported outside the marked hexagon.

\item
  The term corresponding to $C=C[p_{123},p_{213}]$ is
$$\mcont_{C[p_{123},p_{213}]}[\mu,\CL,0]=e_{\mu_{123}} e_{(\beta+(\alpha+\beta))} \cdot\sum_{k\in \Z}e_{ k\alpha}
\cdot\sum_{k=0}^{\infty}e_{ k\beta} \cdot\sum_{k=0}^{\infty}e_{
  k(\alpha+\beta)},$$
which is supported above the line $\ell\langle\mu_{123},\mu_{213}\rangle$.

\item
   The term corresponding to $C=C[p_{132},p_{231}]$ is
 $$\mcont_{C[p_{132},p_{231}]}[\mu,\CL,0]=-e_{\mu_{132}}  e_{(\alpha+\beta)} \cdot\sum_{k\in \Z}e_{
   k\alpha} \cdot\sum_{k=0}^{\infty}e_{ k\beta}
 \cdot\sum_{k=0}^{\infty}e_{ k(\alpha+\beta)},$$
which is supported above the line
$\ell\langle\mu_{132},\mu_{231}\rangle$.
 \end{itemize}

We can thus conclude that the multiplicities $\F\chi_{\CL}$ restricted
to the alcove $\a(0)$ (which is the small central triangle on the
picture) equals the constant 3.

\begin{remark}\label{enlarged}
  If $\gamma\in \mu(M)$ and $\CL$ is positive, then, in fact, more is
  true: $\F\chi_{\CL}$ coincides with $\F \Delta_\mu^M$ on the {\em
    closure} of the alcove $\a(\gamma)$. This effect may be observed
  in the example above. We will not use this refined
  property in this article.
\end{remark}
  \end{example}

\subsection{Decomposition of a $G$-character}
\label{sec:gchar}

Returning to the setup of \S\ref{sec:thm}, we consider a compact
connected Lie group $G$ acting compatibly on an almost complex
manifold $M$, bundles $\CE$ and $\CL$ and the connection $\nabla$ on
$\CL$. Consider the character $\chie$ of the representation of $G$ on $Q(M,\CE)$.

Recall from \S\ref{sec:thm} our notation: $\T$ is the maximal torus of
$G$, $\roots=\roots^+\cup\roots^-$ is the decomposition of the set of
roots of $G$ corresponding to the triangular decomposition
$\g_\C=\t_{\C}\oplus \n^+\oplus \n^-$. We will use $W_G$ for the Weyl
group of $G$, and $\Lambda_\dom\subset\Lambda$ will stand for the
subset of dominant weights, which serves as a fundamental domain for
the $W_G$-action on $\Lambda\subset\t^*$, and whose elements
parametrize the irreducible characters of $G$. We will
identify $\chi_\lambda$ with its restriction to $T$.

Our goal is to understand what formula \eqref{thesum} tells us about
$\chie$ as a $G$-character.
\begin{remark}
  As observed by Atiyah-Bott \cite{ati-bot2}, the Weyl character
  formula $$\chi_\lambda:=\sum_{w\in W_G}\frac{e_{
      w\lambda}}{\prod_{\alpha\in \roots^-}(1-e_{w\alpha})}$$ is the
  Atiyah-Bott fixed point formula for $\chi_{\CL_\lambda}$ associated to the line bundle
  $\CL_\lambda=G\times_{G_\lambda} \C_\lambda$ on the coadjoint orbit
  $G\lambda$.
\end{remark}

Our character $\chie\in R(G)$ may be expressed in a unique way as a
finite linear combination of irreducible characters $\chi_\lambda$,
$\lambda\in\Lambda_\dom$.  In particular, the quantity
$\int_G\chie\,dg=\dim Q(M,\CE)^G$, which we are trying to understand,
is precisely the coefficient of the trivial character in this
decomposition.  To obtain an explicit formula for this multiplicity,
we use the following simple corollary of the Weyl character formula
for $\chi_\lambda$.
\begin{lemma} We have
\begin{equation}
  \label{Weylinv}
\dim Q(M,\CE)^G = \F[\omega_G\cdot\chie](0),
\end{equation}
where
\[ \omega_G = \prod_{\alpha\in \roots^-}(1-e_{\alpha}) \in R(\T).
\]
\end{lemma}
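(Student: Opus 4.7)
The plan is to derive this identity immediately from equation \eqref{reduct} combined with the definition of the Fourier transform on characters introduced earlier in the paper. Recall that for an element $\theta \in R(\T)$, the Fourier transform value at $0$ is
\[
\F\theta(0) = \int_\T t^{0}\,\theta(t)\,dt = \int_\T \theta(t)\,dt,
\]
by the very definition $\F\theta(\lambda)=\int_\T t^{-\lambda}\theta(t)\,dt$.

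First I would observe that $\omega_G \cdot \chie$ lies in $R(\T)$: the factor $\omega_G = \prod_{\alpha\in\roots^-}(1-e_\alpha)$ is a finite integral combination of characters of $\T$, and $\chie \in R(G)$ restricts to an element of $R(\T)$, so their product makes sense as a multiplication in $R(\T)$. Applying the Fourier transform at the origin then gives
\[
\F[\omega_G\cdot\chie](0)=\int_\T \omega_G(t)\,\chie(t)\,dt=\int_\T \prod_{\alpha\in\roots^-}(1-t^{\alpha})\,\chie(t)\,dt,
\]
which by \eqref{reduct} equals $\dim Q(M,\CE)^G$.

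There is no real obstacle: the lemma simply repackages the Weyl-type identity \eqref{reduct} into the Fourier-coefficient language that the rest of the paper uses for its combinatorial analysis. The only mild point to check is bookkeeping: $\omega_G$ can also be written (up to a sign and a shift by $2\rho$) as $\prod_{\alpha\in\roots^+}(1-e_{-\alpha})$ times a character, but either convention is a finite element of $R(\T)$, so the product $\omega_G\cdot\chie$ is a legitimate element of $R(\T)$ and its $0$-th Fourier coefficient is precisely the integral appearing in \eqref{reduct}.
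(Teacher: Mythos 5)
Your proof is correct and matches the paper's intent: equation \eqref{Weylinv} is simply equation \eqref{reduct} rewritten in the Fourier-coefficient notation, using that $\F\theta(0)=\int_\T\theta(t)\,dt$ with the normalized Haar measure, and the paper gives essentially no more than this (it states the lemma as a ``simple corollary of the Weyl character formula,'' which is exactly the content of \eqref{reduct}). The only remark worth making is that $\chie$ and $\omega_G$ both lie in $R(\T)$ (the former after restriction from $G$ to $\T$), so the product and its Fourier coefficient are well-defined --- a point you correctly flag.
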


Now we make the formal observation that multiplying $\chie$ by
$\omega_G$ amounts to tensoring $\CE$ by the trivial $\Z_2$-graded bundle over
$M$ with fiber $\wedge^\bullet\n^-=\wedge^\even\n^-\oplus\wedge^\odd\n^-$
endowed with the adjoint $\T$-action.  More precisely, let us extend
the definition of the character $\chie$ to $\Z_2$-graded vector
bundles $\mathcal{G}^\bullet=\mathcal{G}^\even\oplus\mathcal{G}^\odd$ via
\[    \chi_{\mathcal{G}^\bullet} = \chi_{\mathcal{G}^\even} -
\chi_{\mathcal{G}^\odd}.
\]
Then \eqref{Weylinv} may be written in the form
\begin{equation}
  \label{Weylinv2}
\dim Q(M,\CE)^G = \F\chi_{\CE\tensor \wedge^\bullet\n^-}(0).
\end{equation}

 Proposition \ref{paradan}  states that
   \begin{equation} \label{thesumg}
 \chi_{\CE\tensor \wedge^\bullet\n^-}=\sum_{C\in\Sub(M)}
 \loc_\mu\left[\CE_C\otimes\wedge^\bullet\n^-\otimes
 \CS(\NZZ\felarrow \yc),\a(\gamma_C)\right].
  \end{equation}

  It turns out that after tensoring $\CE$ with $\wedge^\bullet\n^-$, one can
  significantly strengthen the condition on $C$ under which the
  corresponding term in \eqref{thesumg} vanishes.

We consider the $G$-equivariant moment map $\mu_G: M\to\g^*$
satisfying equation (\ref{defmu}). Then the map $\mu$, obtained as the
composition of $\mu_G$ with the restriction $\g^*\to\t^*$, serves as a
moment map for the $\T$-action.  Note that the image $\mu_G(M)\cap \t^*$ is
usually strictly smaller than $\mu(M)$. For example, if $M=G\lambda$
is the coadjoint orbit of $\lambda\in \t^*$, then $\mu_G(M)\cap\t^*$
is the orbit $W_G\lambda$ of $\lambda$ under the Weyl group, while
$\mu(M)$ is the convex hull of $W_G\lambda$.

Also, recall the definition of the affine subspace
$A_C=\mu(p)+\t_C^\perp\subset\t^*$, where $p\in C$, associated to a
fixed point component $C\in\Sub(M)$.
\begin{theorem}\label{ParadanK}
Let $G$ be a compact connected Lie group and let $\CE$ be a
  $G$-equivariant vector bundle on the almost complex manifold $M$.
  Let $\gamma$ be generic in $\t^*$.  
Then the term
 \[
 \loc_\mu\left[\CE_C\otimes\wedge^\bullet\n^-\otimes
 \CS(\NZZ\felarrow \yc),\a(\gamma_C)\right],
 \]
 of \eqref{thesumg} vanishes if the alcove $\a(\gamma_C)$ is not
 contained in $\mu_G(C)\cap A_C\subset\t^*$.
  \end{theorem}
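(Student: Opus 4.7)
My first step is to extract the Weyl-denominator factor. Because $\wedge^\bullet\n^-$ is a trivial $T$-equivariant graded bundle on $C$ whose virtual fiber $T$-character is $\omega_G:=\prod_{\alpha\in\roots^-}(1-e_\alpha)$, this character pulls out of the defining expression \eqref{delec}, and together with the identification \eqref{Deltamuc} we obtain
\[
\loc_\mu[\CE_C\otimes\wedge^\bullet\n^-\otimes \CS(\NZZ\felarrow\yc),\a(\gamma_C)]\;=\;\omega_G\cdot\mcont_C[\mu,\CE,\gamma].
\]
Thus it suffices to prove that $\omega_G\cdot\mcont_C[\mu,\CE,\gamma]=0$ in $\rhat$ whenever $\a(\gamma_C)\not\subset\mu_G(C)\cap A_C$.

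Next, using the Weyl denominator identity $\omega_G = e_{-\rho}\sum_{w\in W_G}(-1)^{|w|}e_{w\rho}$ and the fact that multiplication by $e_{-\rho}$ is injective in $\rhat$, the claim is equivalent to the Weyl-alternating Fourier identity
\[
\sum_{w\in W_G}(-1)^{|w|}\,\F\mcont_C[\mu,\CE,\gamma](\lambda-w\rho)\;=\;0
\qquad\text{for every }\lambda\in\Lambda.
\]
I would try to establish this by combining the explicit formula \eqref{defdmc} for $\mcont_C[\mu,\CE,\gamma]$ as a sum over $p\in C\cap F$ of partition-function factors with the $W_G$-action on $\Sub(M)$ that permutes fixed components $C\mapsto wC$ together with their associated combinatorial data $(\Phi_p,\mu(p),\yc)$.

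The geometric input enters through the hypothesis. Since $\mu_G|_C$ takes values in the $G_C$-fixed locus $(\g^*)^{G_C}$, a Kirwan-type convexity argument for the residual group action on the $G_C$-fixed submanifold $C$ should produce a separating element $X\in\g$ with $\langle\mu_G(m),X\rangle<\langle\gamma_C,X\rangle$ for every $m\in C$. Using (the $\t$-component of a suitable Weyl translate of) $X$ as a new polarizing direction, I would re-expand $\mcont_C[\mu,\CE,\gamma]$ via Proposition \ref{paradandecomposition}, so that the Fourier support of every summand lies in the open half-space $\{\lambda:\langle\lambda-\gamma_C,X\rangle>0\}$. The Weyl-alternating sum then collapses: the antisymmetric factor $\omega_{G_C}$ of $\omega_G$ kills the contributions of the $W_{G_C}$-orbits directly, while pairs of cosets in $W_G/W_{G_C}$ match the Weyl-conjugate components against each other.

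The main obstacle, as I see it, is the sign-bookkeeping in this final cancellation: the reorientation vector $\yc$, the orientation of the subbundle $\NZZ_-$, and the factors $(-1)^{|w|}$ all depend on the choice of component $C$, and matching these consistently across Weyl-conjugate components so that all three sign sources align within each $W_{G_C}$-orbit of fixed points in $C\cap F$ is a delicate combinatorial task that I expect to be the computational heart of the proof.
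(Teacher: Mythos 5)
Your opening reduction is correct: $\wedge^\bullet\n^-$ is a trivial graded bundle, so its virtual $T$-character $\omega_G$ factors out and the term equals $\omega_G\cdot\termec$. But the argument you build on top of this has a genuine gap, and it is worth pinpointing. The theorem asserts that \emph{each individual} $C$-term of \eqref{thesumg} vanishes, and it already has content when $G=\T$ is abelian: there $W_G$ is trivial, $\omega_G=1$, and your Weyl-alternating sum is just $\F\termec(\lambda)$ itself. So the alternating-sign bookkeeping you propose is vacuous precisely where the theorem is already nontrivial; whatever makes $\termec$ vanish cannot come from signs in $\omega_G$. Similarly, the idea of "pairing cosets in $W_G/W_{G_C}$ to match Weyl-conjugate components against each other" aims at a cancellation between \emph{different} elements of $\Sub(M)$, which could at best show that a sum of terms vanishes, not that each one does. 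Finally, re-expanding $\termec$ via Proposition \ref{paradandecomposition} does not change $\termec$ and hence does not move its Fourier support; and in any case a half-space support estimate (which you already have from Proposition \ref{fouriersupp}) is not a vanishing statement.

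What is missing is precisely the single piece of analytic input the paper uses, and which no combinatorial rearrangement of partition functions will supply on its own. The hypothesis $\a(\gamma_C)\not\subset\mu_G(C)\cap A_C$ produces a regular value $\xi\in\a(\gamma_C)$ with $\xi\notin\mu_G(C)$, hence $\muperp\neq0$ on $C\cap\mu^{-1}(\xi)$. Clifford multiplication $c(h[\muperp(q)])$ on $\wedge^\bullet\n^-$ then gives a $\T$-equivariant isomorphism $\wedge^\even\n^-\cong\wedge^\odd\n^-$ over $C\cap\mu^{-1}(\xi)$ (it is invertible since $c(v)^2=-\|v\|^2$), and Corollary \ref{corvanishing}—which rests on the stationary-phase estimate of Theorem \ref{vanishing}—forces $\loc_\mu[\G\otimes\wedge^\bullet\n^-,\a(\gamma_C)]=0$. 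Your "Kirwan-type separating element" intuition is pointing at the right geometry ($\mu_G(C)$ avoiding $\xi$), but the mechanism that converts this into the vanishing of a quasi-polynomial character is the analytic Corollary \ref{corvanishing}, not a Weyl cancellation, and that corollary is not reachable by the route you sketch.
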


  This theorem is due to Paradan (\cite{par2}, Proposition 6.14 and
  Formula 1.6).  In the argument below, we will make use of Theorem
  \ref{first}, whose proof is postponed to \S\ref{sec:appendix}.

\begin{proof}
  Indeed, $\mu_G(C)$ is compact, while $\a(\gamma_C)$ is open, thus if
  $\a(\gamma_C)\not\subset\mu_G(C)\cap A_C$, then there is
  $\xi\in\a(\gamma_C)\setminus(\mu_G(C)\cap A_C)$, which is a regular
  value of $\mu$.

  According to Corollary \ref{corvanishing} proved in
  \S\ref{sec:appendix}, if we construct a $T$-equivariant isomorphism
  over $C\cap\mu^{-1}(\xi)$ between the two equivariant complex vector
  bundles with fibers $\wedge^\even\n^-$ and $\wedge^\odd\n^-$, then
  for any $T$-bundle $\G$ on $C$, we have
\[  \loc_\mu\left[\G\otimes\wedge^\bullet \n^-,\a(\gamma_C)\right] = 0.
\]

Such an isomorphism may be constructed as follows.  Let $\g=\t\oplus
\q$ be the $\T$-invariant decomposition of $\g$ with $\q=[\t,\g]$.
The dual decomposition $\g^*=\t^*\oplus \q^*$ provides us with a map
$\muperp: M\to\q^*$ satisfying
$$\mu_G(q)=\mu(q)\oplus \muperp(q).$$
The condition $\xi\notin \mu_G(C)\cap A_C$ implies that for $q\in
C\cap \mu^{-1}(\xi)$ we have $\muperp(q)\neq0$.

Fix a $G$-invariant positive definite scalar product on $\g$, and
extend it as an Hermitian product to $\g_\C$.  This induces a
$T$-invariant isomorphism $h:\q^*\to\n^-$ satisfying
$\|h(\nu)\|^2=\|\nu\|^2$.

Now recall that for a Hermitian vector space $H$, one can define a
linear map $c:H\to\End(\wedge H)$, called Clifford multiplication,
given by the formula
\begin{equation*} \label{clibefore}
c(v) :=\epsilon(v)-\epsilon(v)^*.
\end{equation*}
Here $\epsilon(v)$ is the multiplication operator in the exterior
algebra of $H$:
\begin{equation*} \label{clibefore1}
\epsilon(v) : \omega\mapsto v\wedge\omega, \,
\omega\in\wedge H,
\end{equation*}
and $\epsilon(v)^*$ is the Hermitian dual of $\epsilon(v)$, which is
the contraction  by scalar multiplication by $v$. Clearly, if $H$ is a
$T$-module with invariant Hermitian structure, then $c$ is
$T$-equivariant.

A key fact is that $c(v)^2=-\|v\|^2\cdot\mathrm{id}$, and hence $c(v)$
is a linear isomorphism whenever $v\neq 0$.  This means that the
correspondence
\[[q,\omega]\mapsto [q,c(h[\muperp(q)])\omega]\]
defines the map we sought: a $\T$-equivariant bundle-map $C\times
\wedge^{\even}\n^-\to C\times \wedge^{\odd}\n^-$, which is an
isomorphism over $\mu^{-1}(\xi)\cap C$. This completes the proof.

 \end{proof}

\section{Quasi-polynomial behavior of multiplicities: the main result}
\label{sec:quasi}



We continue with the setup of the previous section, and,
at this point, we impose the condition of positivity on our line
bundle $\CL$. Recall that this means that the curvature of the
connection $\nabla$ on $\CL$ is of the form $-i\Omega$, where the
closed 2-form $\Omega$ is such that the quadratic form
$V\mapsto\Omega_q(V,JV)$ is positive definite at each point $q\in
M$. Note that this condition, in particular, implies that $\Omega$
is symplectic.

As we pointed out in the introduction, instead, one may start by a
symplectic manifold $(M,\Omega)$ and a Kostant line bundle $\CL$, and
arrive at the same setup. Indeed, then one can choose an almost
complex structure $J$ such that the quadratic form
$V\mapsto\Omega_q(V,JV)$ is positive.  This $J$ is unique up to
continuous deformations, thus $\chiek$ does not depend on its choice.

Our purpose in this section is to analyse \eqref{thesum} for
$\CE=\CL^k$ in this situation, and prove our main result, Theorem
\ref{GS}, which we repeat here for reference.

     \begin{theorem}\label{main}
     Let $(M,J)$ be a compact, connected, almost complex manifold endowed
  with the action of a connected compact Lie group $G$, and let $\CL$
  be a positive $G$-equivariant line bundle on $M$. Suppose the set of
  fixed points under the action of the maximal torus $T$ of $G$ on $M$
  is finite.  Then
  \begin{itemize}
  \item the integer function
\[k\to \dim   Q(M,\CL^k)^G
\] is quasi-polynomial  for $k\geq 1$, and
\item this quasi-polynomial is identically zero if $0\notin \mu_G(M)$.
  \end{itemize}
\end{theorem}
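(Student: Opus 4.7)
My plan is to combine the Weyl-invariant reformulation~\eqref{Weylinv2},
\[
\dim Q(M,\CL^k)^G \;=\; \F\chi_{\CL^k\otimes\wedge^\bullet\n^-}(0),
\]
with the Paradan-type decomposition~\eqref{thesumg} applied to $\CE=\CL^k$. For a generic $\gamma\in\t^*$ close to $0$, this produces the expression
\[
\dim Q(M,\CL^k)^G \;=\; \sum_{C\in\Sub(M)} \F\loc_\mu\bigl[\CL^k|_C\otimes\wedge^\bullet\n^-\otimes\CS(\NZZ\felarrow\yc),\,\a(\gamma_C)\bigr](0),
\]
so the task splits in two: show that each summand is a quasi-polynomial function of $k$ for $k\geq 1$, and show that the total sum vanishes identically when $0\notin\mu_G(M)$.

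For the vanishing statement, Theorem~\ref{ParadanK} eliminates each component $C$ for which the alcove $\a(\gamma_C)$ escapes $\mu_G(C)\cap A_C$. When $0\notin\mu_G(M)$, by compactness one may pick $\gamma$ near $0$ in a neighborhood disjoint from $\mu_G(M)$; I would then check that for every $C\in\Sub(M)$ the alcove $\a(\gamma_C)$ is separated from $\mu_G(C)\subset\mu_G(M)$, forcing every summand to vanish via Theorem~\ref{ParadanK}.

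For the quasi-polynomiality, fix a surviving $C\in\Sub(M)$. If $0\in A_C$, then the $\T_C$-weight of $\CL|_C$ is zero, so $\CL|_C$ descends to a $\T/\T_C$-equivariant line bundle and Lemma~\ref{quasipolynomial} immediately yields the quasi-polynomial dependence on $k$. The serious case is $0\notin A_C$: then the fiber weight $k\mu(p)$ of $\CL^k$ drifts linearly in $k$ transversally to $\rz$, and by Lemma~\ref{simple} the contribution at $\lambda=0$ can only be nonzero when the $\t_C$-component of some weight of $\wedge^\bullet\n^-\otimes\CS(\NZZ\felarrow\yc)$ cancels $k\mu_C$. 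This is the point where the ``localization of positivity'' statement (Proposition~\ref{crucial}) enters: it uses the positivity of $\CL$, via the symplectic compatibility of $\Omega$ with $J$, to constrain the $\T_C$-weights appearing in $\n^-$ and in $\NZZ_-$ so that no such cancellation is possible. This forces the nontrivial translates of $\rz\cap\Lambda$ in the support of $\F\loc_\mu$ to miss the origin, and the surviving contribution reduces to Lemma~\ref{quasipolynomial} applied to the $\T/\T_C$-invariant part of the bundle on $C$.

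The main obstacle is Proposition~\ref{crucial}: formulating the precise ``localization of positivity'' statement and proving it is the delicate step, since this is where the global symplectic/positivity hypothesis on $\CL$ must be converted into a purely combinatorial property of the weight lists $\Phi_p$, $\Psi_p$ and the root data at each fixed point of each subtorus. Once Proposition~\ref{crucial} is in hand, the contributions from the different $C\in\Sub(M)$ are each quasi-polynomial in $k$ by Lemma~\ref{delqpol}, and their sum remains quasi-polynomial by Lemma~\ref{rprops}\,(1). Combined with the asymptotic identification of Proposition~\ref{Dchi} (which pins down $\F\loc_\mu$ uniquely for $k$ large), this gives the quasi-polynomiality of $k\mapsto\dim Q(M,\CL^k)^G$ on $k\geq 1$.
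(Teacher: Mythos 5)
Your proposal follows essentially the same route as the paper: the reformulation \eqref{Weylinv2}, the decomposition \eqref{thesumg} cut down by Theorem~\ref{ParadanK}, the split into the cases $0\in A_C$ and $0\notin A_C$, Lemma~\ref{quasipolynomial} for the former, and Proposition~\ref{crucial} with a weight-drift estimate for the latter. There is, however, a genuine gap in your treatment of the second assertion (vanishing when $0\notin\mu_G(M)$). You propose to show that, for $\gamma$ chosen near $0$ and disjoint from $\mu_G(M)$, \emph{every} alcove $\a(\gamma_C)$ escapes $\mu_G(C)\cap A_C$, so that Theorem~\ref{ParadanK} alone kills every summand. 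This check fails precisely for the components with $0\notin A_C$: the orthogonal projection $\gamma_C$ of $\gamma$ onto $A_C$ is then bounded away from $\gamma$ and may lie in $\mu_G(C)$. The cleanest instance is a single fixed point $C=\{p\}$ with $\mu(p)\neq 0$: one has $\t_C=\t$, $A_C=\{\mu(p)\}$, $\gamma_C=\mu(p)$, and $\a(\gamma_C)=\{\mu(p)\}=\mu_G(C)\cap A_C$, so Theorem~\ref{ParadanK} says nothing. Those contributions are genuinely nonzero in general (e.g.\ at $k=0$), and they vanish only for $k\geq 1$, precisely because of the positivity estimate built on Proposition~\ref{crucial} --- the same mechanism you invoke for the quasi-polynomiality. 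The correct structure for the vanishing is therefore: for $\gamma$ near $0$, the $0\in A_C$ terms are absent by Theorem~\ref{ParadanK} (since then $\gamma_C$ is also near $0$, hence outside $\mu_G(M)$), while the $0\notin A_C$ terms vanish for $k\geq 1$ by the positivity argument. Both ingredients are needed, and the restriction $k\geq 1$ in the second bullet of the theorem is not an artefact. (A minor slip: your final sentence cites Lemma~\ref{delqpol} for the quasi-polynomiality in $k$ of the surviving terms; the relevant statement is Lemma~\ref{quasipolynomial}, which you cite correctly earlier.)
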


\begin{proof}

  Recall some of our notations: $C\in\Sub(M)$ means that $C$ is a
  connected component of the fixed point set of a subtorus $T_C\subset
  T$ with Lie algebra $\t_C$, $A_C$ is the affine space spanned the image $\mu(C)$ of $C$ in
  $\t^*$, $\gamma_C$ is the orthogonal projection of $\gamma$ onto
  $A_C$,
  $\a(\gamma_C)$ is the alcove of $A_C$ containing $\gamma_C$
  and $\yc=\gamma_C-\gamma$ is thought of as a vector in $\t_C$
  (see Definitions \ref{defsub} and \ref{defyg}).

Combining \eqref{Weylinv2}, \eqref{thesumg} and Theorem
\ref{ParadanK}, and setting $\CE=\CL^k$, we obtain the formula

\begin{equation} \label{thesumgl} \dim Q(M,\CL^k)^G = \sum_{C}
  \F\loc_\mu\left[\CL^k\tensor\wedge^\bullet\n^-\otimes
    \CS(\NZZ\felarrow \yc),\a(\gamma_C)\right](0),
  \end{equation}
  where $\gamma$ is a generic element of $\t^*$, and the sum runs over
  $C\in\Sub(M)$ satisfying $\gamma_C\in\mu(C\cap\mu_G^{-1}(\t^*))$.

  First, consider the terms of this sum corresponding to $C\in\Sub(M)$
  for which the affine-linear subspace $A_C$ passes through the
  origin: $0\in A_C$.  For any such $C$, Lemma \ref{quasipolynomial}
  shows that $$k\to
  \F\loc_\mu\left[\CL^k\tensor\wedge^\bullet\n^-\otimes
    \CS(\NZZ\felarrow \yc),\a(\gamma_C)\right](0)$$ is a
  quasi-polynomial function of $k$.  The most important case of such a
  component is $C=M$, and the corresponding term is the
  quasi-polynomial
  $\F\loc_\mu\left[\CL^k\tensor\wedge^\bullet\n^-,\a(\gamma)\right](0)$.
  If $0$ is a regular value of $\mu$, then this is the only component
  with $0\in A_C$.

  Furthermore, the terms corresponding to $C$ with $0\in A_C$ will
  be absent in \eqref{thesumgl} if $0\notin\mu_G(M)$. Indeed, then, for $\gamma$ chosen
  sufficiently close to $0$, the orthogonal projection $\gamma_C$ of
  $\gamma$ to $A_C$ is also close to $0$, and thus
  $\gamma_C\notin\mu(C\cap\mu_G^{-1}(\t^*))\subset \mu_G(M)$.

  Now, both assertions of Theorem \ref{main} will follow if we show
  that, for $\gamma$ chosen generic and sufficiently close to $0$, the
  terms on the right hand side of \eqref{thesumgl} corresponding to
  fixed point components $C\in\Sub(M)$ with $0\notin A_C$ and
  $\gamma_C\in\mu(C\cap\mu_G^{-1}(\t^*))\subset \mu_G(M)$ vanish for
  $k\geq1$.

  Consider thus such a fixed point component $C\in\Sub(M)$ and fix a point
$q\in C\cap\mu_G^{-1}(\t^*)$ satisfying $\mu(q)=\gamma_C$. Thus we have
  \begin{equation}
    \label{qfix}
    q\in C,\, \mu_{\perp}(q)=0 \text{ and }
    \mu(q)=\gamma_C.
  \end{equation}
 Assume, {\em ad absurdum}, that the zero weight
  occurs with nonzero multiplicity in the $\T$-character
\[ \loc_\mu\left[\CL^k\otimes\wedge^\bullet\n^-\otimes
    \CS(\NZZ\felarrow \yzz),\a(\gamma_C)\right].
\]
According to Lemma \ref{simple}, this implies that the representation
of $\T_C$ on the fiber of the bundle $\CL^k\otimes\wedge^\bullet\n^-\otimes
\CS(\NZZ\felarrow \yzz)$ contains the trivial weight at any
point of $C$. In particular, the Lie algebra element $\yzz\in \t_C$
annihilates a nonzero vector in the fiber
\begin{equation}
  \label{fiber}
(\CL^k\otimes \wedge^\bullet\n^-\otimes \CS(\NZZ\felarrow \yzz))_q
\end{equation}
at our chosen point $q$.

To find a contradiction, we will give a positive lower bound on the
eigenvalues of $\yzz$ on this space assuming $\gamma\in\t^*$ is a
generic vector near $0$. Let us consider the eigenvalues of $\yzz$ on
each of the 3 tensor factors in \eqref{fiber}:

\begin{itemize}
\item The eigenvalue of $\yzz$ acting on $\CL^k_q$ is
  equal to $k\langle \mu(q),\yzz\rangle=k\langle
  \gamma_C,\yzz\rangle$.
\item The list of eigenvalues of $\yzz$ on $\wedge^\bullet\n^-$ is
  parametrized by subsets $I\subset\roots^-$ of the negative roots,
  and the eigenvalue corresponding to $I$ is $\sum_{\alpha\in I}\langle
  \alpha,\yzz\rangle$.
\item Finally, recall the definition of $\CS(\NZZ\felarrow
  \yzz)$ from  \eqref{defsdot}. Clearly, all eigenvalues of
  $\yzz$ on $\CS^{[m]}(\NZZ^*_-\oplus \NZZ_+)_q$ are
  nonnegative, and hence,
  the eigenvalues of $\yzz$ on $\CS(\NZZ\felarrow \yzz)$ are
  bounded from below by the eigenvalue of $\yzz$ on $\det
  \NZZ_-^*$. This eigenvalue equals
\begin{equation}
  \label{kceq}
 -\sum_{\eta\in\Phi_C^-}\langle\eta,\yzz\rangle,
\end{equation}
where $\Phi_C^-$ is defined in \eqref{phipm}.
\end{itemize}

The positivity of the eigenvalues of $\yzz$ on the vector space
\eqref{fiber} thus translates into the  inequality
\begin{equation}
  \label{lambdapos}
  k\langle\gamma_C,\yzz\rangle+\sum_{\alpha\in I}\langle
  \alpha,\yzz\rangle
  -\sum_{\eta\in\Phi_C^-}\langle\eta,\yzz\rangle>0\;\text{ for
  }k>0\text{ and every } I\subset\roots^-.
\end{equation}

Consider the first term: according to our assumption, the affine
subspace $A_C\subset\t^*$ does
not pass through the origin, and hence, denoting half the distance
from the origin to $A_C$ by $d_C$, we see that

\begin{equation}
  \label{evpositive}
k\langle\gamma_C,\yzz\rangle\geq d_C^2
\end{equation}
for $\gamma$ sufficiently close to the origin (see the Figure after
Definition \ref{defyg}).


%
By the definition of $\Phi_C^-$, the expression \eqref{kceq} is
also positive. Thus our worry is the set of negative contributions,
which could appear in the second term of \eqref{lambdapos}: these
correspond to those $\alpha\in\roots^-$ for which
$\langle\alpha,\yzz\rangle<0$.

Clearly, for any
$I\subset\roots^-$
\begin{equation}
  \label{gtriv}
  \sum_{\alpha\in I}\langle  \alpha,\yzz\rangle \geq
\sum_{\alpha\in \roots^-,\,\langle  \alpha,\yzz\rangle <0}\langle
\alpha,\yzz\rangle,
\end{equation}
and we have the estimates
\begin{equation}
  \label{gtoy}
   \left|\sum_{\alpha\in \roots^-,\,\langle  \alpha,\yzz\rangle <0}\langle
\alpha,\yzz\rangle - \sum_{\alpha\in \roots^-,\,(  \alpha,\gamma_C) <0}\langle
\alpha,\yzz\rangle\right| < c_1||\gamma||,
\end{equation}
and
\begin{equation}
  \label{phitoy}
  \left|
    \sum_{\eta\in\Phi_C^-}\langle\eta,\yzz\rangle-
\sum_{\eta\in\Phi_C,\,(\eta,\gamma_C)<0}\langle\eta,\yzz\rangle\right|< c_2||\gamma||
\end{equation}
 for constants $c_1,c_2$ independent of $\gamma$.

Combining inequalities \eqref{evpositive}, \eqref{gtriv},
\eqref{gtoy} and \eqref{phitoy}, we can conclude that if we prove the inequlity
\begin{equation*}
  \label{finalgamma}
d_C^2 - (c_1+c_2)||\gamma|| + \left[\sum_{\alpha\in \roots^-,\,(  \alpha,\gamma_C)<0}\langle
\alpha,\yzz\rangle-\sum_{\eta\in\Phi_C,\,(\eta,\gamma_C)<0}\langle\eta,\yzz\rangle\right]>0
\end{equation*}
for $||\gamma||$ sufficiently small, then \eqref{lambdapos} will
follow. Clearly, it is sufficient to show that the expression in the
square brackets is nonnegative, and this, in turn, will follow if we
prove that the roots $\alpha\in\roots^-$ satisfying
$(\alpha,\gamma_C)<0$ are in the list of weights $\Phi_C$ of the
action of the torus $T_C$ on the bundle $\tjs M/\tjs C$ on $C$.

This latter statement is the content of the following crucial
proposition, which, we emphasize, is the only geometric ingredient of
our proof.

We note that below, we pass from the $T_C$-weights of the bundle
$\tjs M/\tjs C$ to those of the bundle $\TT^JM$, which has the effect of
reversing all signs, and adding a number of zero-weights.

\begin{proposition} \label{crucial} Let $(M,\Omega,\mu_G)$ be a Hamiltonian
  $G$-manifold, and let $J$ be a $G$-invariant  almost complex structure such that
  $\Omega(v,Jv)>0$ for all tangent vectors $v\neq0$.  Fix a point
  $q\in M$ such that $\mu_G(q)=\mu(q)$, i.e. $\muperp(q)=0$. Then the
  list of complex weights of the stabilizer group $\T_q$ on $\TT^J_q M$
  with respect to the almost complex structure $J$ contains the following sublist
  of restricted roots:
   \begin{equation}
     \label{posweights}
[\alpha|\t_q;\;\alpha\in\roots,\,(\mu(q),\alpha)>0].
   \end{equation}
\end{proposition}

\begin{proof}
  Recall that $VX(q)$ stands for the tangent vector in $\TT_qM$
  corresponding to $X\in\g$ under the $G$-action on $M$. As our
  calculations below will take place in the tangent space $\TT_qM$, we
  will omit the dependence on $q$ from our notation.  

  We need to show that under the conditions described above, there is
  a nonzero tangent vector $W\in\TT_qM$ such that
\[     X\cdot W = \langle\alpha,X\rangle\,J(W)
\]
for every $X\in\t_q$. Here, $X\cdot W$ stands for the action of the
stabilizer Lie algebra $\g_q$ on $\TT_qM$.

Let us extend the map $V:\g\to \T_qM$  to $\g_\C$ by complex linearity via
\[  V[X+iY] = VX+J(VY).
\]

Then $V:\g_\C\to (T_qM,J)$ is a map of complex vector spaces, which is
equivariant with respect to the action of $T_q$, the stabilizer group of $q$,
acting on $\g_\C$ by the adjoint action, and on $\T_qM$ by its natural
action.

Let $\alpha\in\roots$ be a root satisfying $(\mu(q),\alpha)>0$, and
let $X_\alpha,Y_\alpha\in\g$ be two Lie algebra elements, such that
$X_\alpha+iY_\alpha$ is in the root space
$\g_\C(\alpha)$,
and the triple
\[ E_\alpha=X_\alpha+iY_\alpha\;F_\alpha=-X_\alpha+iY_\alpha,\;
H_\alpha=2i[X_\alpha,Y_\alpha]
\] satisfies the commutation relations of the standard basis of
$\mathfrak{sl}_2$:
\[ [H_\alpha,E_\alpha]=2E_\alpha, [H_\alpha,F_\alpha]=-2F_\alpha,
[E_\alpha,F_\alpha]=H_\alpha.
\]
Then we also have
$(\beta,\alpha)=c_\alpha\langle\beta,iH_\alpha\rangle$ with
$c_\alpha>0$ for any $\beta\in\t^*$.

For any $X\in\t_q$, we have $X\cdot VE_\alpha =
\langle\alpha,X\rangle\, J(VE_\alpha)$. Thus the statement is proved
if we verify that $VE_\alpha$ does not vanish. To show this, we
prove that $\Omega_q(VE_\alpha,J(VE_\alpha))\neq0$.  Indeed, we have
$$
\Omega_q(VE_\alpha,J(VE_\alpha))=\Omega_q(VX_\alpha,J(VX_\alpha))+
\Omega_q(VY_\alpha,J(VY_\alpha))-2\Omega_q(VX_\alpha,VY_\alpha).
 $$
 The first two terms of this sum are nonnegative by our assumptions on
 $\Omega$. As for the last term, from the key identity \eqref{musat},
 we have
\[   \Omega_q(VX_\alpha,VY_\alpha) + \langle \nabla_{VY_\alpha}\, \mu_G,X_\alpha\rangle=0,
\]
where $\nabla$ denotes the directional derivative. On the other hand,
from the invariance of $\mu_G$, we have
$\langle \nabla_{VY_\alpha}\, \mu_G,X_\alpha\rangle+\langle\mu_G,
[X_\alpha,Y_\alpha]\rangle=0$, which leads to
\begin{equation}\label{oms}
\Omega_q(VX_\alpha,VY_\alpha)=\langle\mu_G(q),[X_\alpha,Y_\alpha]\rangle=
\langle \mu_G(q), -iH_\alpha/2\rangle = 
\frac{-1}{2c_\alpha}(\mu_G(q),\alpha)<0.
\end{equation}
This completes the proof of Proposition \ref{crucial} and the proof of
our main result Theorem \ref{main} as well.
\end{proof}
\end{proof}

\section{The asymptotic result in the torus case}
\label{sec:appendix}

The purpose of this section is to give a concise proof of the
following variant of Theorem \ref{first}, which is a special case of
the asymptotic result proved by Meinrenken in \cite{mei1}.
\begin{theorem}\label{vanishing}
  Let $M$ be a compact almost complex $\T$-manifold, let $\CL$ be a
  $\T$-equivariant line bundle over $M$ with moment map $\mu$, and let
  $\CE^\bullet=\CE^\even\oplus\CE^\odd$ be a $\Z_2$-graded equivariant vector
  bundle over $M$. Assume that for a compact subset $\b$
  of the regular values of $\mu$,   $\CE^\even$ and $\CE^\odd$
  are equivariantly isomorphic on $\mu^{-1}(\gamma)$ for every $\gamma\in\b$.
Then there is a  $K>0$ such that
  \[
\F\chi_{\CE^\bullet\otimes\CL^k}(\lambda)=0 \text{ for }k>K \text{ and }\lambda
  \in k\b\cap\Lambda.
\]
\end{theorem}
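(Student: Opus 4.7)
The plan is to follow Meinrenken's stationary phase argument, as indicated in the text. The starting point is the Berline--Vergne integral formula for the equivariant character of the twisted Dolbeault--Dirac operator:
$$\chi_{\CE^\bullet\tensor\CL^k}(\exp X)=\int_M e^{-ik\langle\mu,X\rangle}\,e^{k\Omega}\wedge\ch(\CE^\bullet,X)\wedge\Todd(\tj M,X),$$
for $X\in\t$ near $0$, where $\ch(\CE^\bullet,X):=\ch(\CE^\even,X)-\ch(\CE^\odd,X)$ is the $\Z_2$-graded equivariant Chern character, a formal power series in $X$ with differential-form coefficients on $M$. Substituting this into $\F\chi(k\gamma)=\int_\T t^{-k\gamma}\chi(t)\,dt$ and using a partition of unity on $T$ to localize around each $t_0\in T$ (with the analogous formula on $M^{t_0}$ when $t_0\neq 1$) converts $\F\chi(k\gamma)$ into a finite sum of oscillatory integrals of the form
$$I_{t_0}(k,\gamma)=\int_{U_{t_0}}\int_{M^{t_0}} e^{ik\langle\gamma-\mu(m),X\rangle}\,\beta_{t_0}(m,X,k)\,dm\,dX,$$
with $U_{t_0}\subset\t$ a small neighborhood of $0$ and $\beta_{t_0}$ smooth in $(m,X)$ and polynomial in $k$.

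The next step is to apply clean stationary phase in the variables $(m,X)$. By the moment map identity \eqref{musat}, the critical set of the phase $\langle\gamma-\mu(m),X\rangle$ is cut out by $\mu(m)=\gamma$ and $VX(m)=0$; for $\gamma\in\b$ a regular value of $\mu$ and $X$ small, this is the clean submanifold $(\mu^{-1}(\gamma)\cap M^{t_0})\times\{0\}$ with non-degenerate transverse Hessian. Stationary phase then gives an asymptotic expansion
$$I_{t_0}(k,\gamma)\sim \sum_{j\ge 0}k^{-j}\int_{\mu^{-1}(\gamma)\cap M^{t_0}}a_j(\gamma),$$
uniformly in $\gamma\in\b$, where each $a_j(\gamma)$ is built polynomially from Taylor coefficients at $X=0$ of $\ch(\CE^\bullet,X)|_{\mu^{-1}(\gamma)\cap M^{t_0}}$. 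Since an equivariant isomorphism $\CE^\even\cong\CE^\odd$ over $\mu^{-1}(\gamma)$ automatically restricts to an equivariant isomorphism on $\mu^{-1}(\gamma)\cap M^{t_0}$, we have $\ch(\CE^\bullet,X)|_{\mu^{-1}(\gamma)\cap M^{t_0}}\equiv 0$ as a formal series in $X$, and every $a_j(\gamma)$ therefore vanishes. Summing over $t_0$ yields $\F\chi_{\CE^\bullet\tensor\CL^k}(k\gamma)=O(k^{-\infty})$ uniformly on $\b$, and integrality of $\F\chi$ forces this integer to be zero for $k$ sufficiently large.

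The main obstacle is achieving uniformity of the stationary phase expansion in $\gamma\in\b$ and across the finitely many localization points $t_0$. Cleanness of the critical set is ensured by regularity of every $\gamma\in\b$, and the uniform remainder bounds follow from a standard Morse-type lemma in families applied on the compact set $\b$. A secondary technical point is to verify that the identification of each $a_j(\gamma)$ as a polynomial expression in the Taylor coefficients of $\ch(\CE^\bullet,X)|_{\mu^{-1}(\gamma)\cap M^{t_0}}$ is genuine; this uses the fact that the Hessian in the $X$-direction comes from the moment map and so involves only data intrinsic to $\mu^{-1}(\gamma)\cap M^{t_0}$.
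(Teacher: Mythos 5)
Your proposal starts from the same two ingredients as the paper (the Berline--Vergne integral formula and localization near points of $T$, followed by asymptotic decay combined with integrality of $\F\chi$), but there are genuine gaps in the execution.

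First, the claim that $\ch(\CE^\bullet,X)|_{\mu^{-1}(\gamma)\cap M^{t_0}}\equiv 0$ does \emph{not} follow from the existence of an equivariant isomorphism $\CE^\even\cong\CE^\odd$ over $\mu^{-1}(\gamma)$. The equivariant Chern character is a differential form built from a chosen connection, and its form-level representative is not an isomorphism invariant. To get the vanishing one must actively choose the connections $\nabla_{\CE^\even}$ and $\nabla_{\CE^\odd}$ to coincide there. The paper does exactly this: it notes that the set $\bba$ of regular values $\xi$ for which $\CE^\even\cong\CE^\odd$ over $\mu^{-1}(\xi)$ is open and contains $\b$, and then chooses the two connections to agree on $\mu^{-1}(\bba)$, which makes $R_{\CE^\even}(X,q)=R_{\CE^\odd}(X,q)$ for $\mu(q)\in\bba$. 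Your proposal omits this step, so the asserted vanishing is unjustified.

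Second, even granting that $\ch_{\CE^\bullet}(X,\cdot)$ vanishes on the critical locus, the stationary-phase coefficients $a_j(\gamma)$ involve derivatives of the amplitude \emph{transverse} to the critical manifold, not merely its restriction. Vanishing of the restriction is therefore not enough to make $a_j$ vanish for $j\ge 1$. Your parenthetical remark that the Hessian ``involves only data intrinsic to $\mu^{-1}(\gamma)\cap M^{t_0}$'' does not address this: the point is the normal jets of $\ch_{\CE^\bullet}$, not the Hessian of the phase. This gap is again healed once one fixes coinciding connections on the open preimage $\mu^{-1}(\bba)$, since then $\ch_{\CE^\bullet}(X,\cdot)$ vanishes with \emph{all} derivatives on $\mu^{-1}(\b)$.

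Third, once the amplitude vanishes on a neighborhood of $\mu^{-1}(\b)$, stationary phase is both unnecessary and problematic. Unnecessary, because the paper simply applies the elementary non-stationary Fourier-decay estimate of Lemma \ref{Fourier} in the variable $X$ alone (treating $m\in M$ as a parameter), using that $|\mu(m)-\lambda/k|\ge \delta>0$ on the support of $\ch_{\CE^\bullet}$ when $\lambda\in k\b$, and then integrates the resulting uniform $O(k^{-\infty})$ bound over $M$; no analysis of any critical manifold is needed. Problematic, because the theorem does not assume $\CL$ is positive, so $\Omega$ may be degenerate: your identification of the $m$-critical locus via $VX(m)=0$ uses the non-degeneracy of $\Omega$ through \eqref{musat}, and the cleanness and non-degenerate transverse Hessian you invoke are not guaranteed in this generality. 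A minor further correction: the exponent in the Berline--Vergne formula should be $e^{ik\langle\mu,X\rangle - ik\Omega}$, not $e^{-ik\langle\mu,X\rangle}\,e^{k\Omega}$.
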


\begin{proof}


Again, our starting point is  the Atiyah-Bott fixed point formula \eqref{ab}:
\begin{equation}
  \label{ab1}
  \chi_{\CE^\bullet\otimes\CL^k} = \sum_{p\in F} \frac{e_{k\mu(p)}\ccpb}{\prod_{\phi\in
      \Phi_p}(1-e_{\phi})}.
\end{equation}
Here we used the notation $\ccpb = \tau_p[\CE^\even] -\tau_p[\CE^\odd]$.

It clearly follows from our hypothesis that if $p\in F$ is such that $\mu(p)\in\b$, we
have $\ccpb=0$. Thus, introducing the subset
$F'=\{p\in F;\;\mu(p)\notin\b\}$ of all fixed points, we can
write
\begin{equation}
  \label{fcharint}
  \F\chi_{\CE^\bullet\otimes\CL^k}(\lambda)=\int_\T e_{-\lambda}(t)\sum_{p\in F'}
 \frac{e_{k\mu(p)}(t)\ccpb(t)}{\prod_{\phi\in \Phi_p}
(1-e_{\phi}(t))}\,dt.
\end{equation}
 To estimate this integral, we would like to exchange the
summation and the integration in this formula. However, the terms
of the sum are singular expressions, and thus we can only estimate
the part of this integral where the terms of the sum are bounded.

To find this partial estimate, we proceed as follows. Consider the
open set
\[  \T_\reg=\{g\in \T| \; e_{\phi}(g)\neq1\,\forall \phi\in\Phi_p,\,p\in F'\},
\]
of those elements $g\in \T$ for which the terms of our sum are
regular, and for each $g\in \T_\reg$ pick a ball $U_g\subset\t$
centered at $0\in\t$ such that $g\exp(U_g)\subset \T_\reg$.
Now, let $\rho_g:\T\to[0,1]$ be an auxiliary smooth function with
compact support on $g\exp(U_g)$, and consider the piece
\begin{equation}
  \label{piece}
  \int_\T\rho_g(t)e_{-\lambda}(t)\chi_{\CE^\bullet\otimes\CL^k}(t)\,dt
\end{equation}
of the integral in \eqref{fcharint} supported in $g\exp(U_g)$.
Pulling this integral back to $\t$ via the map $g\exp:\t\to \T$, we
can estimate the absolute value of \eqref{piece} as being less or
equal than
\begin{equation}
  \label{fixint}
  \sum_{p\in F'}\left|\int_\t e^{ik\langle\mu(p)-\lambda/k,X\rangle}\,
\frac{\rho_g(g\exp(X))\,\ccpb(g\exp(X))}
{\prod_{\phi\in \Phi_p}(1- e^{i\langle\phi,X\rangle}e_\phi(g))}\;dX\right|.
\end{equation}
Note that we omitted the constant factor
$e^{ik\mu(p)-i\lambda}(g)$, since it is of absolute value 1.

Now we recall the following standard estimate from Fourier analysis.
\begin{lemma}\label{Fourier} Let $0\neq\eta\in\t^*$, and $H:\t\to\C$
  be a smooth compactly supported
  function. Then for every positive integer $d$,  the inequality
\[
\left|\int_\t e^{i\langle\eta,X\rangle} H(X)\;
dX\right|\leq\frac{C_d(H)}{\|\eta\|^{2d}}
\]
holds, where the constant $C_d(H)$  depends only on a finite number of
derivatives of $H$; in fact, one can take
\[ C_d(H)= \max_{X\in \t}\left|\left[\sum_i \partial_i^2\right]^{d}(H(X))\right|.
\]

\end{lemma}

Now we return to \eqref{fixint}, and consider expression
$\mu(p)-\lambda/k$ in the exponent. Since, according to our
assumptions, $\lambda/k\in \b$, and $\mu(p)$ is not in $\b$, we have
the bound $|\mu(p)-\lambda/k|\geq \delta$ for some positive
$\delta$. Applying Lemma \ref{Fourier} to our integrand with
$\eta=k(\mu(p)-\lambda/k)$, we obtain the following
\begin{corollary} For $g\in \T_\reg$, and smooth function $\rho_g:
  \T\to[0,1]$ with compact support in $U_g$, the integral
  \eqref{piece}, goes to zero faster
  than any negative power of $k$, uniformly for $\lambda\in k\b$.
\end{corollary}


In order to bound the rest of the integral \eqref{fcharint}, for
each $g\in \T\setminus \T_\reg$, we will replace the Atiyah-Bott
formula by an expression, which is regular at $g$. Such formulas
were given in \cite{ber-ver85}; here we sketch the setup and the
relevant notions. We begin with the case of the unit element of $\T$:
$g=\unit$. We follow the exposition  of (\cite{ber-get-ver}, chapters 7,8).

For a manifold $M$ with a $\T$-action,
we define the algebra $\CA_\T(M)$ of {\em equivariant forms}
as the space of smooth maps $\alpha:\t\to \Gamma(\WT)^\T$,
from $\t$ to the set of invariant differential forms on $M$.
As a matter of notation, we will write $\alpha(X)$ for the resulting
differential form on $M$,
and $\alpha(X,q)$ for the value of this differential form at $q\in M$.

The equivariant differential $D:\CA_\T(M)\to\CA_\T(M)$, given by the formula
$$D\alpha(X)=d\alpha(X)-\iota(VX)\alpha(X),$$
satisfies  $D^2=0$.
(Here $\iota(v)\alpha$ is the contraction of the differential form
$\alpha$ by the vector $v$.)
Accordingly, $\alpha\in\CA_\T(M)$ is called {\em
equivariantly closed} if $D\alpha=0$. The formulas in
\cite{ber-ver82} express the integral $\int_M\alpha:\t\to\C$ of an
equivariantly closed form $\alpha$ in terms of local data on $M$.

Returning to our setup of a $\T$-manifold $M$, endowed with a line
bundle $\CL$ with curvature $R_{\CL}=-i\Omega$, we observe that we
have already encountered such equivariantly closed forms: indeed,
equation \eqref{musat} may be interpreted as saying that the
expression
\begin{equation}
  \label{lcurv}
R_\CL(X)=R_{\CL}+L_X-\nabla_{VX}=i\langle \mu,X\rangle-i\Omega,
\end{equation}
the {\em equivariant curvature} of the bundle $\CL$, is equivariantly
closed. The equivariant curvature may be constructed for any equivariant
bundle $\CB$ over $M$ by choosing a $\T$-invariant connection
$\nabla$ on $\CB$ with curvature $R_\CB$.  Then, again, we can define
$R_\CB(X)=R_\CB+L_X-\nabla_{VX}$ which is a smooth map from $\t$ to
the $\T$-invariant sections of the bundle of algebras
$\WT\tensor\End(\CB)$.  We can then define the equivariant forms
\begin{equation}
  \label{defeqforms}
\ch_\CB(X) = \Tr_\CB\left[\exp(R_\CB(X))\right],  \quad
\Todd_\CB(X)=
\mathrm{det}_\CB\left[\frac{R_\CB(X)}{1-\exp(-R_\CB(X))}\right],
\end{equation}
where the trace and the determinant are taken in $\End(\CB)$.
These forms are called, respectively, the {\em equivariant Chern
class} and the {\em equivariant Todd class} of the bundle $\CB$. Note
that the latter is only defined in a neighborhood of $0\in\t$.

Now let us denote by $\bba$ the set of those regular values $\xi$ of $\mu$ in $\t^*$ 
for which $\CE^\even$ and $\CE^\odd$ are isomorphic
over $\mu^{-1}(\xi)$; clearly, $\bba$ is an open set containing $\b$.

Observe that since $\CE^\even$ is isomorphic to $\CE^\odd$ over
$\mu^{-1}(\bba)$, we can assume that the corresponding connections
$\nabla_{\CE^\even}$ and $\nabla_{\CE^\odd}$ are chosen to coincide
over $\mu^{-1}(\bba)$.

Applying the above construction to the bundles $\CE^\even$, $\CE^\odd$ and
$\tj M$, we obtain the equivariant curvature forms
$R_{\CE^\even}$, $R_{\CE^\odd}$ and $R_{\tj M}$, respectively,  and thus
we have \begin{equation}
  \label{curvesame}
  R_{\CE^\even}(X,q) =R_{\CE^\odd}(X,q) \quad\text{if }\mu(q)\in\bba.
\end{equation}

Now we are ready to write down the relevant formula from
\cite{ber-ver85} (see also \cite{ber-get-ver}, chapter 8):
$$\chi_{\CE^\bullet\otimes \CL^k}(\exp X)=\frac{1}{(2\pi i)^{\dim M/2}}
\int_{M} \ch_{\CL^k}(X) \left[\ch_{\CE^\even}(X)-\ch_{\CE^\odd}(X)\right]
\Todd_{\tj M}(X);$$ this equality is valid for $X$ from the neighborhood
$U_\unit$ of $0\in\t$ where $\Todd_{\tj M}(X)$ is defined.

 Writing $\ch_{\CE^\bullet}(X)$ for
 $\ch_{\CE^\even}(X)-\ch_{\CE^\odd}(X)$
 and using \eqref{lcurv}, we can rewrite this expression as 
\begin{equation}
  \label{idrewrite}
\chi_{\CE^\bullet\otimes \CL^k}(\exp X)= \frac{1}{(2\pi i)^{\dim
M/2}}\int_{M}e^{i k\langle \mu,X\rangle -ik\Omega} \ch_{\CE^\bullet}(X)
\Todd_{\tj M}(X).
\end{equation}

Now we proceed similarly to our analysis of the Atiyah-Bott formula
above. We choose an auxiliary smooth function $\rho_\unit:\T\to[0,1]$
with compact support in $\exp(U_\unit)$, and we write
\begin{multline}
 \label{idmult}
 (2i\pi)^{\dim
M/2} \int_\T\rho_\unit(t)e_{-\lambda}(t)\chi_{\CE^\bullet\otimes\CL^k}(t)\,dt=\\
\int_M \sum_{j=0}^{\dim M/2}
\frac{(-ik)^j}{j!}\Omega^j
 \times \int_\t \rho_\unit(\exp(X)) e^{i k\langle
\mu(q)-\lambda/k,X\rangle} \ch_{\CE^\bullet}(X) \Todd_{\tj M}(X) \; dX.
\end{multline}
According to \eqref{curvesame}, the factor
$\ch_{\CE^\bullet}(X,q)$ vanishes whenever $\mu(q)\in \bba$. Denoting
the distance between $\b$ and the complement of $\bba$ by $\delta$, we
can again assume that $|\mu(q)-\lambda/k|>\delta$ whenever
$\lambda\in k\b$. Since both $M$ and the support of
$\rho_\unit$ are compact, we have bounds on the derivatives of the
integrand in \eqref{idmult}, which are uniform in $q$. Hence we can
apply Lemma \ref{Fourier} again to conclude that for each $d$, there
is a constant $C_d$, independent of $q$, such that the integral over
$\t$ in \eqref{idmult} is bound by $C_dk^{-2d}$. Integrating over $M$
then gives us
\begin{corollary}
  The integral \eqref{idmult} goes to zero as $k\to\infty$ faster than
  any negative power of $k$ as $k\to\infty$.
\end{corollary}

Finally, we can extend these arguments to all $g\in \T$, using the
generalization of \eqref{idrewrite} given in \cite [Theorem
3.23]{ber-ver85}.  We first introduce the twisted versions of our
characteristic forms: if $s\in \T$ acts trivially on $M$, then we can
define the twisted Chern character
\[   \ch_{\CB,s}(X)  = \Tr\left[s\exp(R_\CB(X))\right],
\]
and
\[  D_{\CB,s} = \det\left[1-s^{-1}\exp(-R_\CB(X))\right],
\]
as $s$ acts fiberwise in any $\T$-equivariant vector bundle over
$M$.

Now let $g\in \T$ be an arbitrary element, denote by $M^g$ the
submanifold fixed by $g$ (thus $g$ acts trivially on $M^g$) and
let $NM^g$ be the normal bundle of $M^g$ in $M$. Then the formula
in \cite{ber-ver85} states that
\begin{equation}\label{berlinevergne}
\chi_{\CE^\bullet\tensor\CL^k}(g \exp X)=\frac1{(2\pi i)^{\dim M^g/2}}
\int_{M^g} \frac{\ch_{\CL^k,g}(X)\ch_{\CE^\bullet,g}(X)
\Todd_{M^g}(X)}{D_{NM^g,g}(X)}
\end{equation}
for $X$ in a neighborhood $U_g$ of $0$.

From here on, the arguments are identical to those we gave in the case
$g=\unit$, and hence they will be omitted. The result may be
formulated as follows.
\begin{lemma}
  \label{summ}
  For $g\in \T$, let $U_g$ be a neighborhood of $0\in\t$ such that for
  $X\in U_g$ the characteristic classes $\Todd_{M^g}(X)$ and
  $D_{NM^g,g}(X)^{-1}$ are defined on $M^g$. Then for
  any smooth function $\rho_g:\T\to[0,1]$ compactly supported in
  $g\exp(U_g)$, and any $\lambda\in k\b$, the integral
\[\int_\T\rho_g(t)e_{-\lambda}(t)\chi_{\CE^\bullet\otimes\CL^k}(t)\,dt\]
goes to zero faster than any negative power of $k$.
\end{lemma}
Now we can easily finish the proof of the theorem.
 Indeed, the sets
$\{g\exp(U_g)|\; g\in \T\}$
form an open cover of the compact torus $\T$.
We can thus pick a finite subset $S\subset \T$ such that
$\cup_{g\in S}g\exp(U_g)=\T$.
Next, we choose a partition of unity subordinated to this cover,
i.e functions $\rho_g:\T\to[0,1]$, $g\in S$
such that $\rho_g$ is compactly supported in $g\exp(U_g)$ and
 $\sum_{g\in S}\rho_g=1$. Then, for $\lambda\in k\b$, we have
\[
\int_\T e_{-\lambda}(t)\chi_{\CE^\bullet\otimes\CL^k}(t)\,dt =
\sum_{g\in S}\int_\T\rho_g(t)e_{-\lambda}(t)\chi_{\CE^\bullet\otimes\CL^k}(t)\,dt.
\]
Each term of the sum goes to zero as $k\to\infty$ uniformly in
$\lambda$, and hence so does their sum, the expression on the left
hand side, which equals $\F\chi_{\CE^\bullet\tensor\CL^k}$. As
$\F\chi_{\CE^\bullet\tensor\CL^k}$ is an integer,  this completes
the proof of Theorem \ref{vanishing}.

\end{proof}\bigskip

Finally, we can formulate an important corollary of  Theorem
\ref{vanishing}, which is used in the paper.  Recall Definition
\ref{defDelta} and Proposition \ref{Dchi}.

 \begin{corollary}\label{corvanishing}
   Let $\CE^\bullet=\CE^\even\oplus\CE^\odd$ be a $\Z_2$-graded bundle over
   $M$, and let $\a\subset \t^*$ be an alcove.  If for some $\gamma\in
   \a$, which is a regular
   value of $\mu$, the $\T$-equivariant bundles $\CE^\even$ and $\CE^\odd$ are
   isomorphic on $\mu^{-1}(\gamma)$, then
   $\loc_{\mu}[\CE^\bullet,\a]=0.$ In particular, if $\mu^{-1}(\gamma)$ is empty, then
$\loc_{\mu}[\CE^\bullet,\a]=0$ for any $\Z_2$-graded vector bundle $\CE^{\bullet}$.

\end{corollary}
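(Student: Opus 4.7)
The plan is to deduce Corollary \ref{corvanishing} from Theorem \ref{vanishing} by combining it with the asymptotic comparison of Proposition \ref{Dchi}, then propagating the resulting vanishing across the entire lattice $\Lambda\times\Z$ using quasi-polynomiality, and finally specializing at $k=0$.

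First I would upgrade the pointwise hypothesis at $\gamma$ to a compact neighborhood. Since regular values of $\mu$ form an open set and $\mu^{-1}(\gamma)$ is a compact $T$-invariant submanifold over which $\CE^\even$ and $\CE^\odd$ are $T$-equivariantly isomorphic, a standard openness-of-isomorphism argument, performed $T$-equivariantly in an invariant tubular neighborhood of $\mu^{-1}(\gamma)$, shows that this isomorphism persists over $\mu^{-1}(\xi)$ for every $\xi$ in a sufficiently small compact neighborhood $\b\subset\a$ of $\gamma$ consisting entirely of regular values. In the vacuous case $\mu^{-1}(\gamma)=\emptyset$, I would simply choose $\b$ so that $\mu^{-1}(\b)=\emptyset$, in which case the isomorphism condition is automatic for arbitrary $\CE^\bullet$.

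Applying Theorem \ref{vanishing} to $\CE^\bullet$, $\CL$, and $\b$ produces $K_1$ with $\F\chi_{\CE^\bullet\otimes\CL^k}(\lambda)=0$ for $k>K_1$ and $\lambda\in k\b\cap\Lambda$. Applying Proposition \ref{Dchi} to the compact set $\b\subset\a$, separately for $\CE^\even$ and $\CE^\odd$, and taking the difference, yields $K_2$ such that
\[
\F\loc_\mu[\CE^\bullet\otimes\CL^k,\a](\lambda)=\F\chi_{\CE^\bullet\otimes\CL^k}(\lambda)
\]
for $k>K_2$ and $\lambda\in k\b\cap\Lambda$. The upshot is that the function $(\lambda,k)\mapsto\F\loc_\mu[\CE^\bullet\otimes\CL^k,\a](\lambda)$ vanishes at every lattice point of the set $\{(\lambda,k):k>\max(K_1,K_2),\ \lambda\in k\b\}$.

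By Lemma \ref{delqpol}, applied to $\CE^\even$ and $\CE^\odd$ separately, this function is quasi-polynomial on $\Lambda\times\Z$. Since any $(\lambda_0,k_0)$ with $k_0>0$ and $\lambda_0/k_0$ in the interior of $\b$ can be rescaled by large positive integers into the vanishing region, the restriction of the quasi-polynomial to each such ray is a one-variable quasi-polynomial vanishing on an infinite arithmetic progression, hence zero. This places the lattice points of the nonempty open cone $\{(\lambda,k):k>0,\ \lambda/k\in\b^\circ\}$ in the vanishing set, and the version of Lemma \ref{rprops}(4) for the lattice $\Lambda\times\Z$ then forces the quasi-polynomial to be identically zero. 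Specializing to $k=0$ gives $\loc_\mu[\CE^\bullet,\a]=0$. The only genuinely delicate ingredient is the bundle-extension step in paragraph two; everything else is a clean assembly of the results already established in the paper.
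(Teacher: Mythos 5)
Your proposal is correct and follows essentially the same route as the paper's own (very terse) proof: combine Theorem~\ref{vanishing} with Proposition~\ref{Dchi} on a compact $\b\subset\a$ of regular values, then propagate the vanishing of the quasi-polynomial $(\lambda,k)\mapsto\F\loc_\mu[\CE^\bullet\otimes\CL^k,\a](\lambda)$ from the cone $\{k>K,\ \lambda\in k\b\}$ to all of $\Lambda\times\Z$ via Lemma~\ref{delqpol} and Lemma~\ref{rprops}(4), and specialize at $k=0$. You merely make explicit two steps the paper leaves implicit, namely the local ($T$-equivariant, tubular-neighborhood) extension of the isomorphism from $\mu^{-1}(\gamma)$ to $\mu^{-1}(\b)$, and the rescaling/arithmetic-progression argument for the quasi-polynomial propagation.
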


\begin{proof}
 According to Lemma \ref{qpoly} and Proposition \ref{Dchi}, this
 follows from the fact that for a compact $\b\subset\a$ and $k$ sufficiently large
\[  \lambda\in k\b\quad\Rightarrow\quad\F\chi_{\CE^\bullet\otimes\CL^k}(\lambda)=0.
\]
\end{proof}

\section{List of notations}
\label{lon}

\begin{itemize}
\item $(M,\omega)$ -- compact symplectic manifold; $\CE$ --
  vector, $\CL$ -- line bundle over  $M$.
\item $\CE_C$ -- vector bundle restricted to the submanifold $C$.
\item $\TT M$ -- the tangent bundle of $M$, $J\in\mathrm{End}(\TT M)$ stands for
  an almost complex structure, $\tj$ and $\tjs$ denote the $\pm i$ eigenspaces
  of $J$.
\item $\T$ -- compact torus group, $\t$ -- its Lie algebra, $\Lambda$
  -- weight lattice of $\T$, $G$ -- compact Lie group with maximal torus
  $\T$ and Lie algebra $\g$.
\item $\mu_G:M\to \g^*$ and $\mu:M\to\t^*$ -- moment maps,
  corresponding to a not necessarily positive line bundle.
\item $F$ stands for the $T$-fixed point set of $M$, which we assume
  to be finite. For $p\in M$, we denote by $\Phi_p$ the list of
  tangent weights of $M$ at $p$, and by $\Psi_p$ the list of
  $T$-weights of $\CE_p$; the weight of $\CL_p$ equals $\mu(p)$. We
  will use the notation $\tau_p[\CE]=\sum_{\lambda\in\Psi_p}e_\lambda$.
\item $\F\eta$ -- the Fourier transform/multiplicity function of the
  formal character $\eta$ of $\T$.
\item $\Theta[\Phi\felarrow \Xpol]$ -- formal character associated with
  the list of weights $\Phi$ and oriented by the vector $\Xpol$ (cf. \eqref{deftheta}).
\item $\delta[\Phi\felarrow \Xpol,\vv]$ -- formal quasi-polynomial
  character, whose multiplicity function coincides with that of
  $\Theta[\Phi\felarrow \Xpol]$ on the tope $\vv$ (cf. Lemma \ref{qpoly}).
\item $\loc_\mu[\CE,\a]$ -- the asymptotic character associated to
  $\CE$ and $\mu$ (cf. Definition \ref{defDelta}).
\item $G_S,\g_S$ -- connected component of generic stabilizer group of
  the subset $S$ of a $G$-space, and its Lie algebra. In particular,
  $T_C$ and $\t_C$ stand for the connected component of the generic
  stabilizer group of the subset $C\subset M$ under the action of the
  maximal torus.
\item $\CRF$ -- set of linear subspaces spanned by subsets of the list
  $\Phi$; for $\r\in\CRF$ and $\gamma\in\t^*$, denote by $\gamma_S$
  the projection of $\gamma$ onto $\r$ and by $Y_{S,\gamma}$ the
  vector in $\t$ corresponding to $\gamma_S-\gamma$ under the the
  isomorphism $\t\cong\t^*$ (cf diagram after Remark \ref{remrs}).
\item $\Sub(M)$ -- set of connected components of fixed point sets of
  $M$ with respect to the actions of a subtorus group of the maximal
  torus $T$ (Definition \ref{defsub}).
\item For $C\in\Sub(M)$, denote by $A_C$ the affine subspace
  $\mu(p)+\t_C^{\perp}\subset\t^*$, where $p\in C\cap F$ (cf. \eqref{muaff});
  for $\gamma\in\t^*$, let $\gamma_C$ be the projection of $\gamma$
  onto $A_C$, and let $Y_C\in\t$ be the vector corresponding to
  $\gamma_C-\gamma$ (cf. diagram after Definition
  \ref{defyg}). Finally, we denote by $\termec$ the contribution of
  $C$ to the expression of $\chie$ in Proposition \ref{resumprop}.
\end{itemize}


\begin{thebibliography}{99}

\bibitem{atiyah-ell} M. F. Atiyah,
{\em Elliptic operators and compact groups} , Springer L.N.M., n. 401,
(1974).

\bibitem{atiyah}  M. F. Atiyah,
    {\em Convexity and commuting {H}amiltonians},
Bull. London Math. Soc. {\bf 14}, (1982), 1--15





\bibitem{ati-bot2} M.F.~Atiyah and   R.~Bott,
   {\em  A Lefschetz fixed-point formula for elliptic complexes:~I.}
      Ann. of Math.{\bf  86}, (1967),  374--407.


\bibitem{ati-bot3} M.F.~Atiyah,  R.~Bott,
  {\em  A Lefschetz fixed-point formula for elliptic complexes:~II.}
   Ann. of Math. {\bf 88}, (1968),  451--491.

\bibitem{Atiyah-Segal68} {\sc M.F. Atiyah}, {\sc G.B. Segal},
The index of elliptic operators II, {\em Ann. Math.} {\bf 87}, 1968,
p. 531-545.


\bibitem{ber-get-ver} N.~Berline, E.~Getzler, M.~Vergne --
{\it  Heat kernels and Dirac operators.}
        Collection Grundlehren der Mathematischen Wissenschaften,
 vol. 29


\bibitem{ber-ver82} N. Berline, M. Vergne,
{\em  Classes caract{\'e}ristiques {\'e}quivariantes. Formule de
localisation en cohomologie
    {\'e}quivariante.}
        C. R. Acad. Sci. Paris S{\'e}r. I Math  {\bf  295}, (1982), 539-541.

\bibitem{ber-ver85}  N. Berline, M. Vergne,  {\em  The equivariant index and Kirillov's character formula.}
        Amer. Journ. of Math. {\bf  107}, (1985), 1159-1190.




\bibitem{CPV} C. de Concini, C. Procesi, M. Vergne,
{\em Partition function and generalized Dahmen-Micchelli spaces}
    arXiv:math/0805.2907



\bibitem{DM}{ W.
Dahmen , C. Micchelli,}{\em The number of solutions to linear
Diophantine equations and  multivariate splines, } Trans. Amer.
Math. Soc. {\bf 308} (1988), 509-532.






 \bibitem{Gui-ste} V. Guillemin, S. Sternberg,
  {\em Geometric quantization and multiplicities of group representations.}
Invent. Math. {\bf 67} (1982),  515-538.

\bibitem{gui} V.~Guillemin--
{\em Reduced phase spaces and Riemann-Roch.}
 Proceedings: Lie Groups and Geometry in honor of B.Kostant,
Massachussets Institute of Technology, 1994.
 Progress in Mathematics 123,
Birkh{\"a}user, Boston 1995,  305--334.

\bibitem{gui-ler-ste} V. Guillemin, E. Lerman, S. Sternberg, {\em On the Kostant Multiplicity Formula}, J. Geom.
Phys. {\bf 5} (1988), 721--750.


\bibitem{gui-pra} V. Guillemin, E. Prato, {\em  Heckman, Kostant and Steinberg Formulas for symplectic manifolds},
Adv. Math. {\bf 82} (1990), 160--179.

\bibitem{gs} V. Guillemin, S. Sternberg,
     {\em Convexity properties of the moment mapping},
   Invent. Math.  {\bf 67}, (1982), 491--513.




\bibitem{kos} B. Kostant, {\em Quantization and unitary
representations. I. Prequantization.} Lectures in modern analysis
and applications, III, pp 87-208. Lecture Notes in Math. {\bf
170}, Springer, Berlin, (1970).


\bibitem{mei1} E. Meinrenken,{\em  On Riemann-Roch formulas for multiplicities.}
 J. Amer. Math. Soc. {\bf 9} (1996),
37-389.

\bibitem{mei} E. Meinrenken,
{\em   Symplectic surgery and the Spin-c Dirac operator.}  Adv.
Math. {\bf 134}  (1998), 240-277.


\bibitem{mei-sja} E. Meinrenken, R. Sjamaar,
{\em  Singular reduction and quantization.} Topology {\bf 38}
 (1999), 699-762.



\bibitem{mum-fog-kir} D. Mumford, J. Fogarty, F. Kirwan, {\em Geometric
invariant theory.} Third edition. Ergebnisse der Mathematik und
ihrer Grenzgebiete {\bf 34}. Springer-Verlag, Berlin (1994).






\bibitem{par2}  P. E. Paradan,
{\em Localization of the Riemann-Roch character.}
 J. Funct. Anal. {\bf 187} (2001), 442-509.


\bibitem{parjump} P. E. Paradan,
 {\em Wall-crossing  formulas in Hamiltonian Geometry.}
In
{\em Geometric Aspects of Analysis and Mechanics:
In Honor of the 65th Birthday of Hans Duistermaat}
 Progress in Mathematics {\bf  292}, 295-344. Birkh\"auser Boston (2011)




\bibitem{sja} R.~Sjamaar,
{\em  Symplectic reduction and Riemann-Roch formulas for
multiplicities.} American  Mathematical society Bulletin {\bf 33} ( 1996),
327-338.


\bibitem{sze-ver03} A. Szenes, M. Vergne,
{\em Residue formulae for
  vector partitions and Euler-Maclaurin sums.}
 Adv. in Appl.
Math. {\bf 30} (2003), 295-342.



\bibitem{tia-zha} Y. Tian, W. Zhang, {\em An analytic proof of the
    geometric quantization conjecture of Guillemin- Sternberg.}
  Invent. Math. {\bf 132} (1998), 229-259.


\bibitem{ver02} M. Vergne,  {\em Equivariant index formulas for orbifolds},
        Duke Math. Journal, {\bf 82}, (1996), p. 637-652.

\bibitem{ver01} M. Vergne, {\em Quantification g{\'e}om{\'e}trique et r{\'e}duction
symplectique.} S{\'e}minaire Bourbaki. vol 2000/2001, Ast{\'e}risque {\bf
282} (2002), 249-278.


\bibitem{wit92} E. Witten,{\em Two dimensional gauge theories revisited.} J.
Geom. Phys. {\bf 9} (1992), 303-368.



\end{thebibliography}
\end{document}